\documentclass[12pt]{amsart}
\usepackage[margin=1in]{geometry}
\usepackage{graphicx, amssymb, amsmath,amsfonts,mathrsfs, hyperref,color,tikz-cd}
\usepackage{verbatim}
\title{On the elliptic Hall category}
\author{Roman Bezrukavnikov, Ivan Losev, Xinchun Ma}

\newcommand{\Hcal}{\mathcal{H}}
\newcommand{\AH}{\mathcal{AH}}
\newcommand{\CS}{\mathcal{CS}}
\newcommand{\id}{\mathrm{id}}
\newcommand{\Lie}{\mathrm{Lie}}
\newcommand{\qc}{\mathrm{qc}}
\newcommand{\pt}{\mathrm{pt}}
\newcommand{\Nilp}{\mathcal{N}}
\newcommand{\C}{\mathbb{C}}
\newcommand{\Cfrak}{\mathfrak{C}}
\newcommand{\Unip}{\mathcal{U}}
\usepackage[color,matrix,arrow]{xy}
\newcommand{\ol}{\overline}

\newcommand{\Z}{\mathbb{Z}}
\newcommand{\Q}{\mathbb{Q}}

\newcommand{\Pbb}{\mathbb{P}}
\newcommand{\Kbb}{\mathbb{K}}
\newcommand{\Gbb}{\mathbb{G}}
\newcommand{\Ffrak}{\mathfrak{F}}
\newcommand{\Fcal}{\mathcal{F}}
\newcommand{\Vcal}{\mathcal{V}}

\newcommand{\Gcal}{\mathcal{G}}
\newcommand{\Ocal}{\mathcal{O}}
\newcommand{\Bcal}{\mathcal{B}}
\newcommand{\Dcal}{\mathcal{D}}

\newcommand{\Hrm}{\mathrm{H}}

\newcommand{\Hom}{\mathrm{Hom}}

\newcommand{\im}{\mathrm{Im}}

\newcommand{\gr}{\mathrm{gr}}

\newcommand{\Desc}{\mathrm{Desc}}
\newcommand{\Ind}{\mathrm{Ind}}
\newcommand{\bfrak}{\mathfrak{b}}
\newcommand{\nfrak}{\mathfrak{n}}

\newcommand{\gfrak}{\mathfrak{g}}
\newcommand{\g}{\mathfrak{g}}

\newcommand{\tfrak}{\mathfrak{t}}

\newcommand{\pfrak}{\mathfrak{p}}
\newcommand{\qfrak}{\mathfrak{q}}

\newcommand{\Xfrak}{\mathfrak{X}}
\newcommand{\Yfrak}{\mathfrak{Y}}

\newcommand{\Zcal}{\mathcal{Z}}

\newcommand{\Lbb}{\mathbb{L}}
\newcommand{\Nbb}{\mathbb{N}}
\newcommand{\olNbb}{\ol{\mathbb{N}}}
\newcommand{\Mbb}{\mathbb{M}}

\newcommand{\tr}{\mathrm{tr}}
\newcommand{\Ad}{\mathrm{Ad}}

\newcommand{\Kcal}{\mathcal{K}}

\newcommand{\Dbb}{\mathbb{D}}
\newcommand{\Sym}{\mathrm{Sym}}

\newcommand{\Ccal}{\mathcal{C}}

\newcommand{\Fbb}{\mathbb{F}}
\newcommand{\Ebb}{\mathbb{E}}

\newcommand{\Ncal}{\mathcal{N}}
\newcommand{\Mcal}{\mathcal{M}}

\newcommand{\Stab}{\mathrm{Stab}}

\newcommand{\End}{\mathrm{End}}
\newcommand{\GL}{\mathrm{GL}}
\newcommand{\SL}{\mathrm{SL}}
\newcommand{\Spec}{\rm{Spec} }

\newcommand{\Lcal}{\mathcal{L}}
\newcommand{\Lbf}{\mathbf{L}}

\newcommand{\gl}{\mathfrak{gl}}
\newcommand{\sln}{\mathfrak{sl}_n}
\newcommand{\zfrak}{\mathfrak{z}}

\newcommand{\wt}{\widetilde}

\newcommand{\ind}{\mathrm{ind}}

\newcommand{\Ker}{\mathrm{Ker}}

\newcommand{\sign}{\mathrm{sign}}

\newcommand{\Coh}{\mathrm{Coh}}

\newcommand{\CH}{\mathrm{CH}}
\newcommand{\HC}{\mathrm{HC}}
\newcommand{\MHM}{\mathrm{MHM}}

\newcommand{\Rep}{\mathrm{Rep}}
\usepackage[dvipsnames,svgnames]{xcolor}
\theoremstyle{plain}
\newtheorem{thm}{Theorem}[section]
\newtheorem{prop}[thm]{Proposition}
\newtheorem{coro}[thm]{Corollary}
\newtheorem{lem}[thm]{Lemma}

\theoremstyle{definition}
\newtheorem{defn}[thm]{Definition}
\theoremstyle{remark}
\newtheorem{rem}[thm]{Remark}

\numberwithin{equation}{section} \unitlength=1mm 

\begin{document}
\maketitle
\begin{abstract}
In this paper we  study (the positive half of) the elliptic Hall category, a categorical version of the elliptic Hall algebra. Our main result is a computation of categorical versions of the generators in the coherent realization of the elliptic Hall category, i.e., via coherent sheaves on commuting stacks for $\operatorname{GL}_n\times \mathfrak{gl}_n$: we realize them as Hodge associated graded sheaves of certain twisted equivariant $D$-modules on the groups $\operatorname{GL}_n$.
\end{abstract}

{\centerline{\em To Victor Ginzburg with gratitude and admiration}}

\tableofcontents

\section{Introduction}
This paper deals with an {\it elliptic Hall category}, a categorical version of the elliptic Hall algebra. The latter has been studied quite extensively in the last 30 years or so. One exciting feature of this algebra is that it admits several different realizations:
\begin{itemize}
\item[(i)] As the {\it Ding-Iohara algebra}, a quantum group for the double Heisenberg algebra associated to $\mathfrak{gl}_1$, \cite{Ding_Iohara}.
\item[(ii)] As a suitable limit as $n\rightarrow \infty$ of spherical double affine Hecke algebras for $\operatorname{GL}_n$, \cite{SchiffmannVasserot1}.
\item[(iii)] As a suitable version of the Hall algebra of an elliptic curve over a finite field, \cite{BurbanSchiffmann}.
\item[(iv)] As a sort of a double of  the direct sum (over $n\geq 0$) of equivariant K-theories of commuting stacks for the Lie algebras $\mathfrak{gl}_n$, \cite{SchiffmannVasserot2}.
\end{itemize}
Let us review a few features of this algebra. First, it is an algebra over $\mathbb{C}(q,t)$. Second, it has generators labeled by $\mathbb{Z}^2\setminus \{(0,0)\}$. From here we get a triangular decomposition, where the positive part corresponds to the generators labeled by $(r,a)\in \mathbb{Z}^2$ with $r>0$. For example, in realization (iv), the positive part is the direct sum of the equivariant K-theories of commuting stacks for $\mathfrak{gl}_n$  with $r=n$. A natural question therefore is to categorify the elliptic Hall algebra or its positive part  (one could hope to recover a categorification of the former from that of the latter, compare with Kac-Moody categorifications).

We note that realizations (iii) and (iv) above suggest categorifications of the positive part: (iii) in terms of constructible sheaves on moduli spaces of vector bundles on the elliptic curves, while (iv) in terms of coherent sheaves on commuting stacks on $\mathfrak{gl}_n$'s. One expects that the resulting categorifications are equivalent, this equivalence can be thought of as a double affine relative of an equivalence of two categorifications of the affine Hecke algebra by the first named author, \cite{Bezrukavnikov_Hecke}.

In fact, there is the third candidate for being a categorification of the positive part of the elliptic Hall algebra coming from the trace categories of affine Hecke categories for $\operatorname{GL}_n$ that we now describe.

Assume for now that $G$ is an arbitrary connected reductive group and let $G/B$ be its flag variety. We write $\mathcal{Z}  $ for the Steinberg dg-scheme, i.e., the derived fiber product $T^*(G/B)\times^R_{\mathfrak{g}^*}T^*(G/B)$, where the map $T^*(G/B)\rightarrow \g^*$ is the moment map for the action of $G$. We consider the derived category of $G\times \C^\times$-equivariant coherent sheaves on $\mathcal{Z}  $, this is the affine Hecke category for $G$ to be denoted by $\mathcal{AH}$. This category is monoidal with respect to convolution. In particular, we can consider its trace category, cf. \cite{BNP}. In \cite[Theorem 1.4.8]{BNP}, the authors identify this trace category with the derived category of $G\times \C^\times$-equivariant coherent sheaves on the derived scheme $\mathfrak{C}  :=\{(g,x)\in G\times \g| \operatorname{Ad}(g)x=x\}$ set-theoretically supported on the locus $(g,x)$ with nilpotent $x$. The latter category will be denoted by $\operatorname{Coh}_{\mathcal{N},G\times \C^\times}(\mathfrak{C}  )$. The trace functor $\mathcal{AH}\rightarrow \operatorname{Coh}_{\mathcal{N},G\times \C^\times}(\mathfrak{C}  )$ is the usual pull-push functor between coherent sheaves on cotangent bundles using the correspondence $B\backslash G/ B\leftarrow G/\operatorname{Ad}(B)\rightarrow G/\operatorname{Ad}(G)$, we will recall the construction in more detail in Section \ref{SSS:affine_CH}. Denote this functor by $\widetilde{\operatorname{CH}}$. In this paper we will only use this functor but not the claim of \cite[Theorem 1.4.8]{BNP}.

Assume now that $G=\operatorname{GL}_n$. We decorate the schemes and categories appearing in the previous paragraph for $G=G_n:=\operatorname{GL}_n$ with superscript $n$: $\mathcal{AH}_n, \mathfrak{C}  _n$, etc. For $m,n\in \mathbb{Z}_{\geq 0}$, let $P_{m,n}\subset G_{m+n}$ denote the stabilizer of $\mathbb{C}^m\subset \mathbb{C}^{m+n}$. Thanks to the homomorphisms $G_m\times G_n\twoheadleftarrow P_{m,n}\hookrightarrow G_{m+n}$, we get functors (to be called {\it parabolic induction})
\begin{align*}
    &\mathcal{AH}_m\boxtimes \mathcal{AH}_n\rightarrow \mathcal{AH}_{m+n},\\ &\operatorname{Coh}_{\mathcal{N},G_m\times \C^\times}(\mathfrak{C}_m  )\boxtimes \operatorname{Coh}_{\mathcal{N},G_n\times \C^\times}(\mathfrak{C}_n  )\rightarrow \operatorname{Coh}_{\mathcal{N},G_{n+m}\times \C^\times}(\mathfrak{C}_{n+m}  ).
\end{align*}
It is not difficult to see (cf. Section \ref{S_pind_compat}) that the parabolic induction functors intertwine $\widetilde{\operatorname{CH}}_m\boxtimes \widetilde{\operatorname{CH}}_n$ with $\widetilde{\operatorname{CH}}_{m+n}$. Both $$\bigoplus_{n\geq 0}\operatorname{Tr}(\mathcal{AH}_n), \bigoplus_{n\geq 0}\operatorname{Coh}_{\mathcal{N},G_n\times \C^\times}(\mathfrak{C}_n  )$$ acquire monoidal structure via parabolic induction and $\bigoplus_{n\geq 0}\widetilde{\operatorname{CH}}_n$ becomes a monoidal functor.

Now we proceed to describing the main result of our paper. Recall that the elliptic Hall algebra comes with distinguished generators labeled by pairs of integers $(r,a)\neq (0,0)$. For the positive half we take $r>0$. A natural question is to determine natural categorifications of these elements and find their preimages  under the functor $\widetilde{\operatorname{CH}}_r$. A closely related question was asked by Gorsky and Negut, \cite{GN1,GN2} in a somewhat different setting, we'll discuss a relation between their work and ours below in this section and in Section \ref{SS:Lie algebra case}.

Note that the subgroup of length $0$ elements in the affine Weyl group for $\operatorname{GL}_r$ is identified with $\mathbb{Z}$, let $\varpi$ denote a generator. An expected categorification of the generator corresponding to $(r,a)$ is $\widetilde{\operatorname{CH}}(\varpi^a)$ at least when $r,a$ are coprime. The goal of this paper is to compute this coherent sheaf on the commuting scheme $\mathfrak{C}_r$. Let us explain this result here.

Assume that $r$ and $a$ are coprime. One has the unique irreducible twisted $\operatorname{GL}_{r}$-equivariant irreducible D-module on the unipotent cone of $\operatorname{PGL}_r$, where the twist is by the character $\frac{a}{r}\operatorname{tr}$ of $\mathfrak{gl}_r$. Let $\mathbb{N}_{a,r}$ denote the pullback of this D-module to $\operatorname{GL}_r$. The D-module  $\mathbb{N}_{a,r}$ comes with a mixed Hodge structure, hence with a distinguished choice of a good filtration. Taking the associated graded we, in fact, get a $\operatorname{GL}_r\times \C^\times$-equivariant coherent sheaf on $\mathfrak{C}_r  $. We show that this equivariant sheaf is isomorphic to $\widetilde{\operatorname{CH}}(\varpi^a)$. The proof exploits the compatibility between the operation of taking the Hodge associated graded and the functor $\widetilde{\operatorname{CH}}$. Namely, we identify an object in a suitable finite Hecke category whose Hodge associated graded sheaf coincides with $\varpi^a$, and show that the finite analog of the character functor maps this object essentially to $\mathbb{N}_{a,r}$.

In the preceding discussion we assumed that $r,a$ are coprime. We have not computed the image of $\varpi^a$ for general $a$. However, from some perspective, this is not a natural question. Instead, one uses the following construction. Let $d=\operatorname{GCD}(r,a), r_0=r/d, a_0=a/d$, so that $(r_0,a_0)$ are coprime. Instead of considering $\widetilde{\operatorname{CH}}(\varpi^a)$, one constructs an action of $S_d$ on $\widetilde{\operatorname{CH}}(\varpi^{a_0})^{d}$, where the power is for the monoidal structure coming from parabolic induction. Then one looks at the $S_d$-isotypic components, which categorify some generators of the positive half, while the standard generators are expected to be categorified by a complex of the isotypic components.
In the main body of the paper we indeed construct an $S_d$-action, again using a connection to Hodge D-modules.

\begin{rem}
It is expected, \cite[Conjecture 1.5.1]{LNY} that $\operatorname{Tr}(\mathcal{AH})$ is also equivalent to the category of constructible sheaves with nilpotent singular support on the moduli stack of $\check{G}$-bundles on an elliptic curve. Combining this with the equivalence  $\operatorname{Tr}(\mathcal{AH})\cong \operatorname{Coh}_{\mathcal{N},G\times \C^\times}(\mathfrak{C}  )$ from \cite{BNP}, one gets a version of the geometric Langlands correspondence for elliptic curves. We expect that on the constructible side, the object $\varpi^a\in \mathcal{AH}_n$ with $\operatorname{GCD}(a,n)=1$ map to the extensions by $0$ of trivial local systems on the stratum of  stable bundles of rank $n$ and degree $a$.
%
\end{rem}

\begin{rem}
The elliptic Hall category in its coherent realization acts via Hall product on the derived category of coherent sheaves on the Hilbert scheme of points on the plane. In a future work we plan to use this action to study noncommutative resolutions of the configuration space of points on the plane derived equivalent to the Hilbert scheme of points, and their constructible realization via elliptic mirabolic character sheaves, realizing the program described in \cite{Ok}, \cite[Conjecture 1]{ABM} in this special case. The sheaves studied in the present paper play a key role in that program, this was one of our sources of motivation. The noncommutative resolutions are also closely related to modular representations of rational Cherednik algebras studied in \cite{BezLosev}.
\end{rem}

Finally, we would like to discuss a connection between our work and \cite{GN1,GN2}. The purpose of these papers is to relate the cocenters of the affine Hecke categories for $\operatorname{GL}_n$ to the category of $(\C^\times)^2$-equivariant coherent sheaves on the additive version of the commuting stacks: $\{(x,y)\in \mathfrak{gl}_n| [x,y]=0\}/\operatorname{GL}_n$. Here the two copies of $(\C^\times)^2$ act by rescaling $x,y$ individually. This, still conjectural, realization has the advantage to directly decategorify (by passing to the Grothendieck groups) the positive half of the elliptic Hall algebra that is missing in our approach. On the other hand, our approach is more natural from the geometric and representation theoretic perspectives. It is also worth mentioning that our answer for $\widetilde{\operatorname{CH}}(\varpi^a)$ has a direct additive analog.
By \cite[Proposition 3.8]{ma}, in the coprime case the additive analogs of $\widetilde{\operatorname{CH}}(\varpi^a)$ defined as Hodge associated graded of certain twisted equivariant D-modules on $\mathfrak{gl}_r$ coincide with the sheaves $\mathcal{H}'_{a,r}$ from  \cite[Section 2.3]{GN2}.

The paper is organized as follows. In Section \ref{S_preliminaries} we gather various preliminaries: on equivariant mixed Hodge modules, their associated graded sheaves, monodromic Hecke categories, and the character functors for mixed Hodge modules and for coherent sheaves. The results and constructions in this section are versions of known results and constructions. Starting Section \ref{SS_assoc_graded} we present new results. In Section \ref{SS_assoc_graded} we explain how to compute the Hodge associated graded sheaves of standard and costandard objects in monodromic Hecke categories, this is our key tool in computing the objects $\widetilde{CH}(\varpi^a)$. The computation of these objects is carried in Section \ref{S_traces}, the main result is Theorem \ref{thm: coxeter_cuspidal} there. In Section \ref{S_pind_compat} we study the monoidal structure on the positive half of the elliptic Hall category coming from parabolic induction. In Section \ref{S_sym_group_act} we introduce the symmetric group action on the $d$-th power of $\widetilde{\CH}(\varpi^a)$ mentioned above.

{\bf Acknowledgments}. R.B was partially supported by the NSF grant DMS-2101507. I.L. was partially supported by the NSF grant DMS-2501558.  X.M. was supported by the NSF grant DMS-2503504. We would like to thank Dougal Davis for stimulating discussions including the arguments in Section \ref{SS_Hodge_assoc_gr}. We also thank Sam Raskin for stimulating discussions and Eugene Gorsky for comments on the previous version of the manuscript.

ChatGPT was used for auditing proofs and improving exposition. Besides cosmetic editing, it pointed out pullback conventional error in Proposition \ref{prop:gr_pushforward}, shift errors in Lemma \ref{lem:BS_conormal}  and Theorem \ref{thm: coxeter_cuspidal}, ordering error in Lemma \ref{lem:BS_conormal} and sign errors in Corollary \ref{coro:gr length 0}, suggested a simplification of the proof of Theorem \ref{thm: coxeter_cuspidal} and brought \cite{NTY} to our attention.

It is an honor for us to dedicate this paper to Victor Ginzburg whose insights have had a deep influence on the subject and on the authors.

\section{Preliminaries}\label{S_preliminaries}

\subsection{Equivariant mixed Hodge modules and their associated graded}\label{SS_Hodge_assoc_gr}
In Saito's theory \cite{saitofrench, saito}, the structure of a (polarizable) mixed Hodge module on a smooth algebraic variety $X$ consists of a regular holonomic left $D_X$-module, a weight filtration, a good filtration on $M$ called the Hodge filtration, a polarization, and a $\Q$-structure satisfying compatibility conditions. We use the complex version in the sense of Sabbah and Schnell \cite{SabbahSchnell}, in which the $\Q$-structure is omitted. Polarizations will not play a role for our purposes either.

All functors below are derived unless otherwise specified. Let $f: X\to Y$ be a morphism between two smooth varieties, we reserve $f_*$ and $f^*$ for pushforward and pullback functors for quasi-coherent sheaves. Also, denote $!$ pullback  of quasi-coherent sheaves  by $f^{!,\qc}$.

Suppose the smooth variety $X$ has an action of an affine algebraic group $G$. Let $D^b_G\MHM(X)$ be the equivariant dg category of mixed Hodge modules on $X$. We first recall the definition. Notice that we have $n+2$ morphisms $G^{n+1}\times X\rightarrow G^n\times X$: $$(g_0,\ldots,g_n,x)\mapsto (g_1,\ldots,g_n,x), (g_0g_1,g_2,\ldots,g_n x),\ldots, (g_0,\ldots,g_{n-1},g_nx)$$ for all $n\geqslant 0$. They allow us to form the inverse system of dg categories by pullback:
\begin{equation}
    \label{eq:pullback_diagram} D^b\MHM(X)\rightrightarrows D^b\MHM(G\times X)\mathrel{\substack{\longrightarrow\\[-.6ex]\longrightarrow\\[-.6ex]\longrightarrow}} D^b \MHM(G^2\times X)\ldots
\end{equation}
One defines the dg category $D^b_G\MHM(X)$ as the inverse limit of this diagram. Since the morphisms in (\ref{eq:pullback_diagram}) are t-exact, the category $D^b_G\MHM(X)$ acquires a t-structure whose heart is the abelian category $\operatorname{MHM}_G(X)$ of $G$-equivariant mixed Hodge modules. The Hodge filtration on any object in $\operatorname{MHM}_G(X)$ is $G$-stable.

The equivariant mixed Hodge derived categories admit the six functor formalism. Assume  $f: X\to Y$ is $G$-equivariant. We use $f_\dagger$, $f_!$, $f^\dagger$ and $f^!$ to denote the (proper) pushforward and (proper) pullback between $D^b_G\MHM(X)$ and $D^b_G\MHM(Y)$.
Under the forgetful functor, these functors are compatible with the corresponding ones on the complexes of $D$-modules: let $\Dcal_{X\to Y}$ and $\Dcal_{Y\leftarrow X}$ be the transfer $D$-modules as defined in \cite[1.3]{HTT}. We then have \[
    f_\dagger:=f_*(\Dcal_{Y\leftarrow X}\otimes_{\Dcal_X} -),\quad  f^\dagger=\Dcal_{X\to Y}\otimes_{f^*\Dcal_Y} f^*(-) ,
\]
Define $f_!=\Dbb_Y^{-1}\circ f_\dagger\circ \Dbb_X$ and $f^!=\Dbb_X^{-1}\circ f^\dagger\circ \Dbb_Y$. Here $\Dbb_?=\Hom_{\Dcal_?}(-,\Dcal_?)[\dim(?)]$ is the Verdier duality for $?=X$ or $Y$.

One has the the functor $\gr: D^b\MHM(X)\rightarrow D^b \Coh^{\mathbb{G}_m}(T^*X)$ via taking associated graded with respect to the Hodge filtration. It is t-exact as Hodge filtrations are strict according to \cite[Proposition 5.1.14]{saitofrench}. We now want to explain its version in the $G$-equivariant setting. Namely, we can consider the smooth quotient stack $X/G$ and the derived stack $T^*(X/G)$. One can talk about the dg category of $\mathbb{G}_m$-equivariant quasi-coherent sheaves with bounded coherent cohomology, which we denote by $D^b\Coh_{\mathbb{G}_m}(T^*(X/G))$. Its heart is the category of $G\times \mathbb{G}_m$-equivariant coherent sheaves on $\mu^{-1}(0)$, where $\mu:T^*X\rightarrow \g^*$ is the moment map.  Every object in $\MHM_G(X)$ is a strongly $G$-equivariant D-module. Since the Hodge filtration is $G$-stable, we see that $\gr \mathcal{F}$ is scheme-theoretically supported on $\mu^{-1}(0)$ and is a $G\times \mathbb{G}_m$-equivariant coherent sheaf on $\mu^{-1}(0)$. This gives an exact functor between abelian categories $\MHM_G(X)\rightarrow \Coh_{G\times \mathbb{G}_m}(\mu^{-1}(0))$. Note that the latter is the heart of the natural t-structure on the dg category $D^b\Coh_{\mathbb{G}_m}(T^*(X/G))$.

\begin{prop}\label{Prop:Hodge_gr_equiv}
The functor of taking Hodge associated graded extends to $D^b_G\MHM(X)\rightarrow D^b\Coh_{\mathbb{G}_m}(T^*(X/G))$.
\end{prop}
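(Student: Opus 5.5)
The plan is to define the equivariant functor by descent along the simplicial diagram (\ref{eq:pullback_diagram}), matching it with a parallel simplicial presentation of the coherent side. For each $n\geq 0$, the non-equivariant functor
\[
\gr: D^b\MHM(G^n\times X)\rightarrow D^b\Coh^{\mathbb{G}_m}(T^*(G^n\times X))
\]
already exists. The key point is how $\gr$ interacts with smooth pullback. For a smooth morphism $p: Y\to Z$, the Hodge filtration on $p^\dagger M$ (with the appropriate shift) is $p^*F_\bullet M\otimes \mathcal{O}$, and the resulting identity
\[
\gr(p^\dagger M)=dp^*\,(\mathrm{pr})^* \gr M
\]
is the pull-push along the Lagrangian correspondence $T^*Z\leftarrow Y\times_Z T^*Z\hookrightarrow T^*Y$. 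This is a closed embedding composed with a smooth base change, so it is t-exact and compatible with $\mathbb{G}_m$.

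This suggests replacing $D^b\Coh^{\mathbb{G}_m}(T^*(X/G))$ by a limit. I would use that the derived stack $T^*(X/G)$ is obtained by descent along the smooth atlas $X\to X/G$. Concretely, $D^b\Coh_{\mathbb{G}_m}(T^*(X/G))$ is the totalization of the categories $D^b\Coh^{\mathbb{G}_m}$ of the derived fiber products $(G^n\times X)\times_{X/G}T^*(X/G)$. These are the derived zero loci of the partial moment maps inside $T^*(G^n\times X)$, or equivalently the Čech nerve of the smooth cover $X\times_{X/G}T^*(X/G)\to T^*(X/G)$. The step $n=0$ is the derived scheme $\mu^{-1}(0)$, with the Koszul dg structure on $T^*X$.

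The first main step is to refine $\gr$ levelwise. For a $G$-equivariant object of $D^b\MHM(G^n\times X)$, meaning an object pulled back along the structure maps, I would show that $\gr$ naturally lands in sheaves on the corresponding derived fiber product, and not merely on $T^*(G^n\times X)$. At the level of the heart this is the observation already made in the text: $G$-stability of the Hodge filtration forces scheme-theoretic support on $\mu^{-1}(0)$. To get it at the dg level, I would instead apply $\gr$ directly to the cosimplicial diagram. The functor $\gr$ is a functor of filtered complexes, hence naturally dg. Combined with the smooth pullback compatibility above, this gives a morphism of cosimplicial dg categories from the diagram (\ref{eq:pullback_diagram}) to the cosimplicial diagram
\[
D^b\Coh^{\mathbb{G}_m}\bigl((G^n\times X)\times_{X/G}T^*(X/G)\bigr).
\]
Passing to limits then produces the desired functor. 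I would check that its restriction to hearts recovers the abelian-level functor described before the proposition, using t-exactness of $\gr$ by strictness.

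The main obstacle is making the compatibility isomorphisms $\gr\circ p^\dagger\simeq p^{\natural}\circ\gr$ coherent, i.e. homotopy-coherent across all simplicial degrees. Specifically, these must hold at the level of dg enhancements, where $D^b\MHM$ is itself only defined as a limit or via Saito's filtered complexes. My first approach would be to work with the dg category of filtered $\mathcal{D}$-module complexes (Laumon's filtered derived category). There, smooth pullback and $\gr$ are both strict functors between model categories, so the squares commute on the nose and strictly compatible filtrations give a strict cosimplicial diagram.

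If that fails, the fallback is to use that both sides are generated by the heart. Since $G$ is affine, the $n$-th terms are controlled by $\Coh$ of a smooth-over-$\mu^{-1}(0)$ scheme. I would then construct the functor as the derived functor of the exact functor on hearts, in the style of Beilinson's realization. This requires showing that the equivariant derived category is the derived category of its heart for bounded objects, which is plausible here but nontrivial.
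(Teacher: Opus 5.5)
Your overall frame (descent along (\ref{eq:pullback_diagram}) plus compatibility of $\gr$ with smooth pullback) matches the paper, and Laumon's filtered model is a reasonable substitute for the paper's exactness argument on coherence. There is, however, a genuine gap: the cosimplicial morphism you claim does not land where you say. The smooth-pullback compatibility intertwines $f^\dagger$ with $\rho_{f,*}\varpi_f^*$ between the \emph{full} cotangent bundles $T^*(G^n\times X)$ and $T^*(G^{n+1}\times X)$. So levelwise $\gr$ gives a morphism from (\ref{eq:pullback_diagram}) into the system $n\mapsto\operatorname{QCoh}^{\mathbb{G}_m}(T^*(G^n\times X))$ with transition functors $\rho_*\varpi^*$. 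It does not give a morphism into the \v{C}ech system of the derived fiber products $(G^n\times X)\times_{X/G}T^*(X/G)$ with flat pullbacks.

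A levelwise functor $D^b\MHM(G^n\times X)\to D^b\Coh^{\mathbb{G}_m}\bigl((G^n\times X)\times_{X/G}T^*(X/G)\bigr)$ does not exist. The level-$n$ categories consist of non-equivariant objects, whose $\gr$ need not be supported on the moment-map zero locus. For $\mathbb{G}_a$ acting on $\mathbb{A}^1$ by translations, $\gr$ of the $\delta$-module at $0$ lives on a cotangent fiber, while $\mu^{-1}(0)$ is the zero section. Restricting to ``equivariant objects'' levelwise is not available either: equivariance is the descent datum in the limit, not a property of level-$n$ objects. Even on the heart, support on the classical $\mu^{-1}(0)$ only gives an object over $t_0\mu^{-1}(0)$. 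The derived structure of $\mu^{-1}(0)^R$ is a compatible action of the Koszul complex $\Ocal_{T^*X}\otimes\Lambda^\bullet\gfrak$, which is what controls the higher Ext's. It must be produced from the descent data, and your plan has no mechanism for this.

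The missing ingredient is the identification
\[
\lim_n\bigl(\operatorname{QCoh}^{\mathbb{G}_m}(T^*(G^n\times X)),\ \rho_*\varpi^*\bigr)\simeq\operatorname{QCoh}^{\mathbb{G}_m}(T^*(X/G)).
\]
This is not ordinary smooth descent, since $\rho_*\varpi^*$ is not a pullback functor. The paper obtains it from the Dolbeault stack. First, $\operatorname{IndCoh}^{\mathbb{G}_m}(Y_{Dol})\simeq\operatorname{QCoh}^{\mathbb{G}_m}(T^*Y)$, with $f^!_{Dol}$ corresponding to $\rho_{f,*}\varpi_f^*$ compatibly with compositions. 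Second, $\operatorname{IndCoh}$ of Dolbeault stacks satisfies smooth descent (\ref{eq:descent}).

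Your fallback does not repair this, because its premise is false: $D^b_G\MHM(X)$ is not the derived category of its heart in general. Take $X=\mathrm{pt}$ and $G=\mathbb{G}_m$, where the heart is mixed Hodge structures. Powers of the Chern class give nonzero morphisms $\C\to\C(k)[2k]$ for all $k\geq 0$ between heart objects, which is impossible in the bounded derived category of mixed Hodge structures. The same higher Ext's appear on the coherent side, in $\Coh_{\mathbb{G}_m}(\gfrak^*[-1]/G)$. So a functor derived from the exact functor on hearts would have the wrong source.
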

\begin{proof}
The proof is in several steps.

{\it Step 1.} For a smooth (finite type) algebraic stack $X$ with affine stabilizers, one can consider the Dolbeault stack $X_{Dol}$. It is defined as the relative classifying stack of the formal group scheme $\operatorname{exp}(T_X)$ over $X$, where $T_X$ is the tangent complex of $X$ with its abelian (dg) Lie algebra structure and $\exp(T_X)$ is the sheaf of formal groups associated to this sheaf. In the notation of \cite[Ch. 7, \S1.4.1]{GR}, this is $\operatorname{Vect}_X(T_X[1])$.

Its crucial property is that the (dg) category $\operatorname{IndCoh}^{\mathbb{G}_m}(X_{Dol})$ of $\mathbb{G}_m$-equivariant ind-coherent sheaves on $X_{Dol}$ is equivalent to the (dg) category $\operatorname{QCoh}^{\mathbb{G}_m}(T^*X)$ of $\mathbb{G}_m$-equivariant quasi-coherent sheaves on $T^*X$, see \cite[Theorem A]{Chen}. Moreover, let $f:X\rightarrow Y$ be a morphism of smooth stacks. Consider the maps
\begin{equation}
    \label{diagram_lagrangian} T^*X \xleftarrow{\rho_f}X\times_Y T^*Y\xrightarrow{\varpi_f}T^*Y.
\end{equation}
where $\rho_f$ is the co-differential of $f$ and $\varpi_f$ is the natural projection. When $f$ is smooth and representable, $\varpi_f$ is also smooth and $\rho_f$ is a closed embedding. These morphisms yields a functor $\rho_{f,*}\varpi_{f}^*: \operatorname{QCoh}^{\mathbb{G}_m}(T^*Y)\rightarrow \operatorname{QCoh}^{\mathbb{G}_m}(T^*X)$. On the other hand, $f$ gives rise to $f_{Dol}:X_{Dol}\rightarrow Y_{Dol}$ and one can consider the $!$-pullback functor $\operatorname{IndCoh}^{\mathbb{G}_m}(Y_{Dol})\rightarrow \operatorname{IndCoh}^{\mathbb{G}_m}(X_{Dol})$. A part of \cite[Theorem A]{Chen}, {cf. \cite[(6.2.1)]{FR}}, is that under the equivalences $\operatorname{IndCoh}^{\mathbb{G}_m}(?_{Dol})\cong \operatorname{QCoh}^{\mathbb{G}_m}(T^*?)$ (where $?=X,Y$), the functor $f^!_{Dol}$ becomes $\rho_{f,*}\varpi_{f}^*$, {which is compatible with compositions of morphisms.}

{\it Step 2}. We'll need the following classical result about about the compatibility between taking the Hodge associated graded sheaf and smooth pullbacks. Let $f:X\to Y$ be a smooth morphism. Using the construction of Step 1 we get a functor $\gr: D^b\MHM(?)\rightarrow \operatorname{IndCoh}^{\mathbb{G}_m}(?_{Dol})$ for $?=X$ or $Y$. Take $(M,F)\in \MHM(Y)$, { where $F$ denotes the Hodge filtration}. By \cite[Theorem 30.1]{schnell} (also c.f \cite[Theorem 4.7]{kashiwara2003}), we have $F_k(f^\dagger M)=f^*F_kM$. Hence flatness of $f$ implies $\gr^F_k(f^\dagger M)=f^*\gr^F_kM$. Therefore we have a natural isomorphism
\begin{align}
    \label{iso: alpha} \alpha: \gr\circ f^\dagger \xrightarrow{\sim}\rho_{f,*}\varpi_{f}^*\circ \gr.
\end{align}
Moreover, since $f^\dagger$, $\gr$ and $\rho_{f,*}\varpi_{f}^*$ are all exact, and the isomorphisms above are compatible with compositions of smooth morphisms, we actually have a natural isomorphism of functors between the bounded dg categories, compatible with compositions. In other words, the following diagram commutes and the commutativity is compatible with compositions of smooth morphisms.
	\begin{align}
    \label{diag: pullback gr} \xymatrix{D^b\MHM(Y) \ar[d]^{\gr}\ar[rrrr]^ - {f^\dagger} &&&&D^b\MHM(X)\ar[d]^{\gr}\\ \operatorname{IndCoh}_{\mathbb{G}_m}(Y_{Dol})\ar[rrrr]^ - {f^!_{Dol}} &&&&\operatorname{IndCoh}_{\mathbb{G}_m}(X_{Dol}). }
\end{align}

{\it Step 3}. Apply Step 2 to the smooth maps in \eqref{eq:pullback_diagram} to form a commutative diagram in the shape of \eqref{diag: pullback gr} going infinitely to the right. Since $\alpha$ is compatible with compositions, we can take the inverse limit of this diagram. Note that \begin{equation}
    \label{eq:descent} \operatorname{IndCoh}_{\mathbb{G}_m}((X/G)_{Dol})\xrightarrow{\sim}\operatorname{lim}_n [\operatorname{IndCoh}_{\mathbb{G}_m}((G^n\times X)_{Dol})]
\end{equation}
by a special case of a more general descent statement proved by \cite[Proposition 2.3.8]{Chen}. Thanks to the construction of $D^b_G\MHM(X)$, we hence obtain a functor
\[
    D_G^b\MHM(X)\to \Ind\Coh_{\Gbb_m}((X/G)_{Dol})\cong \mathrm{QCoh}^{\Gbb_m}(T^*(X/G)).
\]
Finally we notice that the image of $\gr$ lands inside the bounded part $D^b\Coh_{\mathbb{G}_m}(T^*(X/G))$.
\end{proof}

\subsection{Associated graded of direct/inverse images and inductions}\label{SS_assoc_graded_general}


For any $G$-equivariant morphism between smooth $G$-varieties $X\rightarrow Y$ we have the direct analog of (\ref{diagram_lagrangian}), where we keep the same notations when there is no risk of confusion:
\begin{equation}
    \label{eq:lagrangian_stack} T^*(X/G) \xleftarrow{\rho_f}X/G\times_{Y/G} T^*(Y/G)\xrightarrow{\varpi_f}T^*(Y/G).
\end{equation}
\subsubsection{Inverse image} Write $\delta_f:=(\rho_f)_* \varpi_f^*$.
\begin{lem}
\label{lemma_gr_pullback}
Suppose $f:X\to Y$ is a $G$-equivariant smooth morphism. The following diagram commutes.
	\[
    \xymatrix{D^b_G\MHM(Y) \ar[d]^{\gr}\ar[rrrr]^ - {f^\dagger} &&&&D^b_G\MHM(X)\ar[d]^{\gr}\\ \Coh_{\mathbb{G}_m}(T^*(Y/G))\ar[rrrr]^ - {\delta_f} &&&&\Coh_{\mathbb{G}_m}(T^*(X/G)) }
\]
\end{lem}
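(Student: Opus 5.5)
The plan is to deduce the lemma from the non-equivariant compatibility (\ref{diag: pullback gr}) of Step 2 in the proof of Proposition \ref{Prop:Hodge_gr_equiv}, by running the same limit argument as in Step 3. The key input is that the equivariant functor $\gr$ was defined as the inverse limit of the non-equivariant functors $\gr$ over the simplicial diagrams (\ref{eq:pullback_diagram}) for $X$ and for $Y$.

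First I would note that $f$ induces a morphism of simplicial diagrams $f_n:=\id_{G^n}\times f: G^n\times X\to G^n\times Y$. Each $f_n$ is smooth, since $f$ is. Moreover, every square formed by $f_n$, $f_{n+1}$ and a face map of the two simplicial objects commutes strictly. This is because $f$ is $G$-equivariant: for instance the action face map $(g_0,\ldots,g_n,x)\mapsto(g_0,\ldots,g_nx)$ commutes with $\id\times f$ because $f(g_nx)=g_nf(x)$. All maps involved are smooth. So Step 2 gives natural isomorphisms $\alpha_{f_n}:\gr\circ f_n^\dagger\cong (f_{n})_{Dol}^!\circ\gr$ for each $n$. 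Since $\alpha$ is compatible with compositions of smooth morphisms, these isomorphisms are also compatible with the isomorphisms $\alpha$ for the face maps. They therefore assemble into a morphism between the two infinite commutative ladders used in Step 3. Passing to the inverse limit then yields an isomorphism $\gr\circ f^\dagger\cong (f/G)_{Dol}^!\circ \gr$ of functors $D^b_G\MHM(Y)\to \operatorname{IndCoh}_{\mathbb{G}_m}((X/G)_{Dol})$. Here I use that $f^\dagger$ on equivariant categories is itself the limit of the $f_n^\dagger$, and that under the descent equivalence (\ref{eq:descent}) the functor $(f/G)^!_{Dol}$ is the limit of the $(f_n)^!_{Dol}$.

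The remaining step is to identify $(f/G)_{Dol}^!$ with $\delta_f=(\rho_f)_*\varpi_f^*$ for the stacky diagram (\ref{eq:lagrangian_stack}). Step 1 quotes \cite[Theorem A]{Chen} for this identification, for morphisms of smooth stacks, compatibly with compositions. The morphism $X/G\to Y/G$ is smooth and representable, so this applies directly. Alternatively, one can check that the identification for the stack is the limit of the identifications for the $f_n$. This works because $\delta_{f_n}$ is the pullback of $\delta_f$ along the smooth atlases $T^*(G^n\times X)\leftarrow\ldots$, using base change for the Cartesian squares relating $X/G\times_{Y/G}T^*(Y/G)$ to $(G^n\times X)\times_{G^n\times Y}T^*(G^n\times Y)$. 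Finally, both functors preserve bounded coherent objects, so the diagram restricts to $\Coh_{\mathbb{G}_m}$.

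I expect the main obstacle to be this compatibility of the Chen identification $f^!_{Dol}\simeq\delta_f$ with the descent equivalence, i.e.\ that the coherence data match at the level of dg/$\infty$-categories and not just objectwise. I would try first to cite the functoriality in \cite[Theorem A]{Chen} and \cite[(6.2.1)]{FR} as stated for stacks, avoiding any hands-on check.
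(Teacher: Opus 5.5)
Your proposal is correct and follows the paper's argument: apply Step 2 to each $\mathrm{id}_{G^n}\times f$, use the compatibility of the resulting isomorphisms $\alpha_n$ with compositions to assemble an infinite commutative ladder over (\ref{eq:pullback_diagram}), pass to the limit, and note that the image lands in the bounded coherent subcategories. The identification of the stacky $!$-pullback on Dolbeault stacks with $\delta_f$ that you single out is left implicit in the paper's proof; there it is covered by the Chen input quoted in Step 1 of Proposition \ref{Prop:Hodge_gr_equiv}.
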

\begin{proof}
In the case of $\mathrm{id}\times f: G^n\times X\to G^n\times Y$ for all $n\geq 0$, we have a natural isomorphism $\alpha_n$ and hence a commutative diagram from Step 2 of the proof of Proposition \ref{Prop:Hodge_gr_equiv}. As $\alpha_n$'s are compatible with the compositions, these commutative diagrams again assemble to an infinite commutative diagram using \eqref{eq:pullback_diagram}. Now take limit and note the image lands in the desired bounded categories.
\end{proof}
\subsubsection{Direct image}
Let $\wt{\omega}_{X/Y}$ denote the inverse image of the relative canonical bundle $\omega_{X/Y}$ to $X\times_Y T^*Y$. {For a $\C^\times$-equivariant sheaf $\Fcal$, or equivalently a grading $\Fcal=\bigoplus_{k\in \Z} \Fcal_k$, let $\Fcal\langle \ell\rangle $ denotes the shift such that $(\Fcal\langle \ell\rangle)_k=\Fcal_{k-\ell}$.}

Let
	\[
    \gamma_f(-):=(\varpi_f)_*\bigl(\rho_f^*(-)\otimes\wt\omega_{X/Y}\langle\dim(Y)-\dim(X)\rangle \bigr).
\]
that we can view as a functor $\operatorname{QCoh}(T^*X)\rightarrow \operatorname{QCoh}(T^*Y)$ (the  derived categories of quasi-coherent sheaves) or as a functor $\operatorname{QCoh}(T^*(X/G))\rightarrow \operatorname{QCoh}(T^*(Y/G))$, depending on the context. Note that if $f$ is proper, then $\gamma_f$ restricts to the coherent subcategories (i.e., complexes with bounded coherent cohomology).

\begin{prop}\label{prop:gr_pushforward}
When $p: X\to Y$ is a $G$-equivariant projective morphism, the following diagram commutes.
	\[
    \xymatrix{D^b_G\MHM(X) \ar[d]^{\gr}\ar[rrrr]^ - {p_\dagger} &&&&D^b_G\MHM(Y)\ar[d]^{\gr}\\ \Coh_{\mathbb{G}_m}(T^*(X/G))\ar[rrrr]^ - {\gamma_p} &&&&\Coh_{\mathbb{G}_m}(T^*(Y/G)) }
\]
\end{prop}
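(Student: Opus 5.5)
Write $\Omega$ for the descent diagram \eqref{eq:pullback_diagram} with terms $G^n\times X$, and likewise for $Y$. The plan is to reduce to the non-equivariant statement via the same descent argument used for Proposition \ref{Prop:Hodge_gr_equiv} and Lemma \ref{lemma_gr_pullback}. The non-equivariant input is Saito's theorem on the Hodge filtration of direct images under projective morphisms. For a projective morphism $p:X\to Y$ of smooth varieties and $(M,F)\in \MHM(X)$, the direct image $p_\dagger M$ is computed by the filtered relative de Rham complex. By \cite[Th\'eor\`eme 5.3.1]{saitofrench} (strictness of the direct image), taking $\gr^F$ commutes with $p_\dagger$. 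Concretely,
\[
\gr^F(p_\dagger M)\simeq R(\varpi_p)_*\bigl(\rho_p^*\,\gr^F M\otimes \wt\omega_{X/Y}\bigr),
\]
up to the grading shift $\langle \dim Y-\dim X\rangle$. This is the classical Laumon-type formula: the graded transfer module $\gr \Dcal_{Y\leftarrow X}$ is $\omega_{X/Y}\otimes \Ocal_{X\times_Y T^*Y}$. Its shift in filtration index comes from the convention that the Hodge filtration of $\Ocal_X$ begins in degree $0$ while that of $\omega_X$ begins in degree $-\dim X$, which is where the $\langle\dim(Y)-\dim(X)\rangle$ comes from.

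\emph{Step 1} is to establish this formula carefully as a natural isomorphism $\beta:\gr\circ p_\dagger\xrightarrow{\sim}\gamma_p\circ\gr$ on $D^b\MHM(X)$. Strictness lets us apply it to complexes by applying it termwise to a filtered resolution; on the graded side we use $\rho_p^*$ derived, which is harmless since $\rho_p$ is a regular-embedding-type map between smooth spaces.

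\emph{Step 2} is base change. For the structure maps $G^n\times X\to G^{n-1}\times X$ in $\Omega$ and the maps $\id\times p:G^n\times X\to G^n\times Y$, we need that $\beta$ intertwines with the isomorphisms $\alpha$ of \eqref{iso: alpha} for smooth pullbacks. On the $\MHM$ side this is smooth base change $f^\dagger p_\dagger\simeq p'_\dagger f'^\dagger$. On the coherent side it is the corresponding base change for the correspondences \eqref{diagram_lagrangian}, which one checks using flat base change, since $\varpi$ of a smooth map is smooth. We then verify that these two base change isomorphisms match under $\alpha,\beta$. At the level of filtered $D$-modules this is a compatibility of transfer bimodules: $\Dcal_{Y\leftarrow X}$ pulls back along smooth maps to the corresponding transfer module.

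\emph{Step 3} passes to the limit. With these compatibilities in hand, the isomorphisms $\beta_n$ for $\id\times p$ assemble into a morphism of the cosimplicial diagrams of Step 3 of Proposition \ref{Prop:Hodge_gr_equiv}. Taking limits via \eqref{eq:descent} gives the commuting square for $D^b_G\MHM$. It remains to check that the functor $\gamma_p$ on $T^*(X/G)$ is indeed the limit of the $\gamma_{\id\times p}$. This follows because the diagram \eqref{eq:lagrangian_stack} for $X/G\to Y/G$ pulls back to the corresponding diagram for $G^n\times X\to G^n\times Y$ after the smooth base changes; here a derived fiber product is used, since $T^*(X/G)$ is a derived stack. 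Properness of $p$ ensures the image lies in the bounded coherent category.

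I expect the main obstacle to be Step 2: making the base change compatibility coherent in the $\infty$-categorical sense, not merely objectwise. The approach I would try first is to note that all functors involved are exact on hearts up to the bounded derived extension, so that compatibility on the abelian level determines the higher coherences.
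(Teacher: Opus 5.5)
\textbf{There is a genuine gap, at the point you flag as the main obstacle, and your proposed remedy does not work.}

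Your Step 1 applies Saito's and Laumon's formula to filtered resolutions. That yields only an isomorphism $\gr\circ p_\dagger\cong\gamma_p\circ\gr$ of functors between \emph{triangulated} categories. The descent in Step 3 needs more: a natural isomorphism of dg functors on each $D^b\MHM(G^n\times X)$, compatible with the smooth-pullback isomorphisms $\alpha$.

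You suggest that ``all functors involved are exact on hearts'', so that the abelian-level isomorphism determines the higher data. This is false for a general projective $p$. For the projection $\pi:Y\times\Pbb^d\to Y$, neither $\pi_\dagger$ nor $\gamma_\pi$ is t-exact. Already for $\Pbb^d\to\pt$, $\pi_\dagger\Ocal_{\Pbb^d}$ has cohomology in degrees $-d,\dots,d$. Likewise $\gamma_\pi$ involves a derived restriction to the zero section followed by derived global sections on $\Pbb^d$.

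The exactness argument does work for closed embeddings, where $i_\dagger$ and $\gamma_i$ are both exact. There your plan matches the paper's: extend the abelian-level isomorphism to the dg level, then check compatibility with smooth base change by a local computation in normal coordinates.

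\textbf{The missing idea.} Factor $p$ as a closed embedding $X\hookrightarrow Y\times\Pbb^d$ followed by the projection $\pi$, and handle the projection by a different mechanism. Do not try to lift $\lambda_{\mathrm{tr}}$. Instead, construct a dg-level transformation $\mu:\gamma_\pi\gr\to\gr\,\pi_\dagger$ as the adjunction mate of $\alpha$ with respect to the adjunctions $\pi_\dagger\dashv\pi^\dagger[d](d)$ and $\gamma_\pi\dashv\rho_*\varpi^*[d]\langle d\rangle$.

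Because $\mu$ is built only from units, counits and $\alpha$, it is automatically compatible with smooth pullbacks, and so it descends to the equivariant categories. What then has to be proved is that $\mu$ is invertible. The paper does this in three steps:
\begin{itemize}
\item it shows $\mu\circ\lambda_{\mathrm{tr}}=u\cdot\id$ for some scalar $u$;
\item it reduces to the case $\Pbb^d\to\pt$;
\item it checks $u\neq0$, because $H^0$ of the counit of $\gamma_\pi\dashv\rho_*\varpi^*[d]\langle d\rangle$ is an isomorphism.
\end{itemize}
Without some such device, your Steps 2 and 3 have no dg-level isomorphism to work with in the non-exact (projection) case.
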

\begin{proof}
\newcommand{\dg}{\mathrm{dg}}
Below, we will use subscripts $_{\tr}$ and $_{\dg}$ to distinguish between triangulated categories and dg categories.

We first recall the natural isomorphism of functors $\lambda_{\tr}:\gr p_\dagger\to\gamma\gr$ on the level of triangulated categories, from \cite[2.3.6-2.3.7]{saitofrench} and \cite[2.3.2]{laumon}.  
For a complex $M$ of mixed Hodge modules on $X$, there is an isomorphism in the triangulated derived category of graded $\gr(\Dcal_Y)$-modules by strictness of Hodge filtrations:
\begin{align}
    \label{iso: Laumon} \gr(p_\dagger M)=\gr\left(p_*(D_{Y\leftarrow X}\otimes_{\Dcal_X}M)\right)\xrightarrow{\sim} p_*\!\left(\gr(\Dcal_{Y\leftarrow X}) \otimes_{\gr (\Dcal_X)} \gr (M)\right).
\end{align}
Now $\gr(\Dcal_{Y\leftarrow X})$ can be identified with the pushforward of $ \Ocal_{X\times_Y T^*Y}\otimes\omega_{X/Y}\langle\dim Y-\dim X\rangle$ to $X$. Write $q: T^*Y\to Y$ for the projection. Then taking $\Ocal_{T^*Y}\otimes_{q^{-1}\gr(\Dcal_Y)}q^{-1}(-)$ of both sides of \eqref{iso: Laumon}, we obtain $\lambda_\tr:\gr p_\dagger(M)\to \gamma \gr(M).$

Next, decompose $p$ as a composition of a $G$-equivariant closed embedding and a $G$-equivariant projection. For simplification, below we write $\rho:=\rho_?$,  $\varpi:=\varpi_?$ where $?$ could be different maps in different contexts.

\textit{The case of the closed embedding,} $i:X\to Z:=Y\times\mathbb P^d$. Since $\Dcal_{Z\leftarrow X}$ is flat as a right filtered $\Dcal_X$-module, $i_\dagger$ is exact. Moreover, $\gamma$ is also exact. Therefore, $\lambda_\tr$ extends to a natural isomorphism $\lambda_\dg$ between the bounded dg categories.

\if force the pushforward $i_\dagger M=i_*(\Dcal_{Z\leftarrow X}\otimes_{\Dcal_X}M)$ for $M\in \MHM(X)$ satisfies
\[
    \gr(i_\dagger M)\simeq i_*\bigl(\gr\Dcal_{Z\leftarrow X}\otimes_{\gr \Dcal_X}\gr M\bigr) \simeq \varpi_*\bigl(\rho^*\gr M\otimes\wt\omega_{X/Z}\bigr)\langle\dim Z-\dim X\rangle=\gamma_i(\gr M),
\]
using $\gr(\Dcal_{Z\leftarrow X})\simeq\mathcal O_{X\times_Z T^*Z}\otimes\omega_{X/Z}\langle\dim Z-\dim X\rangle$ and the flatness of $\rho$. \fi

We proceed to show that $\lambda_\dg$ is compatible with smooth pullback -- {as in  the proof of Proposition \ref{Prop:Hodge_gr_equiv} this is needed to pass from the non-equivariant to equivariant categories}. Let $f: Z'\to Z$ be smooth, $X'=X\times_Z Z'$, with $f': X'\to X$, $i': X'\to Z'$. Let $\alpha:\gr f^\dagger\xrightarrow{\sim}\delta_f\gr $ be as in Lemma~\ref{lemma_gr_pullback}, $\theta: f^\dagger i_\dagger\xrightarrow{\sim} i'_\dagger f'^\dagger$ the base change, and $\beta:\delta_f\gamma_i\xrightarrow{\sim}\gamma_{i'}\delta_{f'}$ flat base change. We claim that the composition
	\begin{equation}
    \label{eq:long_composition} \delta_f\gamma_i\gr \xrightarrow{\delta_f\lambda^{-1}_\dg}\delta_f\gr i_\dagger \xrightarrow{\alpha^{-1}i_\dagger}\gr f^\dagger i_\dagger \xrightarrow{\gr\theta}\gr i'_\dagger f'^\dagger \xrightarrow{\lambda_\dg f'^\dagger}\gamma_{i'}\gr  f'^\dagger \xrightarrow{\gamma_{i'}\alpha}\gamma_{i'}\delta_{f'}\gr
\end{equation}
equals $\beta\gr$ {(here we abuse the notation and use functors such as $\gr$ to denote the identity endomorphisms)}. For $M\in \MHM(X)$, both (\ref{eq:long_composition}) and $\beta\gr$ are maps of coherent sheaves. By exactness, this case suffices for bounded complexes and may be checked \'{e}tale locally, where $Z=X\times\mathbb C^c$, $Z'=Z\times W$ and $f$ is the projection. Choose normal coordinates $t_1,\dots, t_c$ and trivialize $\omega_{X/Z}$. Let $\xi_{t_j}$ be the cotangent coordinate along $t_j$. For $\nu=(\nu_1,\dots,\nu_c)$, write $\partial_t^\nu=\prod_j\partial_{t_j}^{\nu_j}$ and $\xi_t^\nu=\prod_j\xi_{t_j}^{\nu_j}$. For $m\in F_kM$, let $[m]$ denote its image in $\gr_k^F M$. Then $\lambda_\dg$ sends $[\partial_t^\nu\otimes m]\mapsto\xi_t^\nu\otimes[m]$, $\alpha$ is the identity on sections since $F_k f^\dagger=f^*F_k$, and $\theta$ is $a\otimes\partial_t^\nu\otimes m\mapsto\partial_t^\nu\otimes(a\otimes m)$. Chasing a section $a\otimes\xi_t^\nu\otimes[m]$ under (\ref{eq:long_composition}) gives $\xi_t^\nu\otimes a\otimes[m]$, which is its image under $\beta$.

\textit{The case of the projection} $\pi: Y\times\Pbb^d\to Y$. We will study the adjunction mate $\mu_\dg$ of the natural isomorphism $\alpha$ {from (\ref{iso: alpha})}, which is elaborated on below. Write $\gamma:=\gamma_\pi$. { When for two functors $\mathcal{F},\mathcal{G}$ we write $\mathcal{F}\dashv \mathcal{G}$, we mean that $\mathcal{F}$ is left adjoint to $\mathcal{G}$.} Clearly, $\rho^*\dashv\rho_*$. Since $\varpi$ is proper, we also have $\varpi_*\dashv\varpi^{!,\qc}$. It follows that the right adjoint of $\gamma$ is
\[
    \rho_*\bigl(\varpi^{!,\qc}(-)\otimes\wt{\omega}^{-1}_{Y\times\Pbb^d/Y}\bigr)\langle d\rangle.
\]
For the smooth proper map $\varpi$, $\varpi^{!,\qc}=\varpi^*(-)\otimes\wt{\omega}_{Y\times\Pbb^d/Y}[d]$. As a result, the right adjoint of $\gamma$ is
\[
    \sigma=\rho_*\varpi^*[d]\langle d\rangle.
\]
As $\pi$ is proper, the right adjoint of  $\pi_\dagger$ is $\pi^\sharp:=\pi^\dagger[d](d)$ where $(-)$ denotes the Tate twist. Let
\[
    \eta: \id\to \pi^\sharp\pi_\dagger,\quad c:\gamma\sigma\to \id
\]
be the adjunction unit and counit respectively.  The mate of $\alpha$ is the composition
\[
    \mu_\dg: \gamma \gr \xrightarrow{\gamma \gr(\eta)} \gamma \gr \pi^\sharp\pi_\dagger \xrightarrow{\gamma \alpha\pi_\dagger}\gamma \sigma \gr\pi_\dagger \xrightarrow{c\gr\pi_\dagger}\gr\pi_\dagger.
\]
Since all the natural transformations in the composition are compatible with smooth pullbacks, so is $\mu_\dg$. Therefore, to descend $\mu_\dg$ to obtain the desired isomorphism $\gamma \gr\cong\gr\pi_\dagger$, we just need to show that $\mu_\dg$ is invertible.

Consider $\gr\pi_\dagger\eta:\gr\pi_\dagger\to\gr\pi_\dagger\pi^\sharp\pi_\dagger$ and $\tau\pi_\dagger$, where $\tau$ is given by
\begin{align*}
    \tau: \gr\pi_\dagger\pi^\sharp\xrightarrow{\lambda_\tr{\pi^\sharp}}\gamma\gr\pi^\sharp \xrightarrow{\gamma\alpha}\gamma\sigma\gr\xrightarrow{c{\gr}}\gr.
\end{align*}
Applying naturality of $\lambda_\tr$ to $\eta: \id \to \pi^\sharp\pi_\dagger$, we get
\[
    \lambda_\tr \pi^\sharp\pi_\dagger\circ \gr(\pi_\dagger \eta)=\gamma \gr(\eta)\circ \lambda_\tr.
\]
Postcomposing it with $c \gr \pi_\dagger \circ \gamma \alpha \pi_\dagger $, we obtain $\tau{\pi_\dagger}\circ\gr\pi_\dagger\eta=\mu_\dg\lambda_\tr$. We will show that $\mu_\dg\lambda_\tr=u\cdot\id$ for a nonzero constant $u$.

As $\pi=\id\times a$, where $a$ is the map $\Pbb^d\to\pt$, all maps and transformations above decompose into products. Hence below we may assume $\pi$ is simply the projection $\Pbb^d\to \pt$. Since $\pi_\dagger \pi^\sharp\C\cong\bigoplus_j \Hrm^j(\Pbb^d)(d)[2d-j]$ lies in $D^{\le0}$ with $\Hrm^0=\Hrm^{2d}(\Pbb^d)(d)\cong\C$, and $\gr$ is t-exact, we have
\[
    \Hom(\gr \pi_\dagger \pi^\sharp\C,\C)=\Hom(\Hrm^0\gr \pi_\dagger \pi^\sharp\C,\C)\simeq\C .
\]
It is spanned by $\gr\varepsilon$, where $\varepsilon: \pi_\dagger \pi^\sharp\to\id$ is the counit.
Thus $\tau=u\cdot\gr\varepsilon$ for some constant $u\in\C$. Since $\varepsilon{\pi_\dagger}\circ\pi_\dagger\eta=\id$, we get $\mu_\dg\lambda_\tr=u\cdot\id$, and it remains to show $u\neq0$.

Since both $\alpha$ and $\lambda$ are isomorphisms, it suffices to show that $\Hrm^0(c)$ is an isomorphism, as that would imply $\Hrm^0(\tau)\neq 0$. By Yoneda, for $K\in\Coh_{\Gbb_m}(\pt)^\heartsuit$, the map $\Hrm^0(c)_K: \Hrm^0(\gamma\sigma K)\to K$ is an isomorphism if and only if precomposition
\[
    \Hrm^0(c)_K^*:\Hom(K,K')\to\Hom(\Hrm^0(\gamma\sigma K),K')
\]
is bijective for every $K'\in\Coh_{\Gbb_m}(\pt)^\heartsuit$. Now we have
\begin{align*}
    \Hom(\gamma \sigma K,K')&\cong \Hom(\sigma K,\sigma K')\\
    &\cong \Hom(\varpi^*K,\varpi^*K')\quad \because \rho \text{ is a closed embedding}\\ &\cong\Hom(K,\varpi_*\varpi^*K')\cong\Hom(K,K') \because \varpi_*\Ocal_{\Pbb^d}=\C.
\end{align*}
Denote the composition by $\Psi$. Let $u_K: K\to \varpi_*\varpi^*K$ be the unit. For $h\in \Hom(K,K')$, the first isomorphism sends $c_K^*(h)$ to $\sigma(h)$. The next equality sends $\sigma(h)$ to $\varpi^*(h)$. Next, $\varpi^*(h)$ is sent to $u_{K'}^{-1}\circ \varpi_*\varpi^*(h)\circ u_K$. By naturality of the unit, we have
\[
    \varpi_*\varpi^*(h)\circ u_K=u_{K'}\circ h.
\]
Consequently, $\Psi(h\circ c_K)=h$. Hence $\Psi$ is the inverse of $c^*$. Moreover,
\[
    \gamma\sigma(\C)=\varpi_*\bigl(\rho^*\rho_*\varpi^*\C\otimes\wt\omega_{\Pbb^d}\bigr)[d]
    =\bigoplus_{i=0}^d\varpi_*\bigl(\wedge^{d-i}\Omega_{\Pbb^d}\bigr)[d+i]=\bigoplus_{i=0}^d\C[2i]
\]
(up to $\Gbb_m$-grading shifts), where the second equality uses 
that $\rho^*\rho_*\Ocal=\bigoplus_j\wedge^jT_{\Pbb^d}[j]$. Hence $\gamma\sigma K\in D^{\le0}$ and $$\Hom(\gamma\sigma K,K')=\Hom(\Hrm^0(\gamma\sigma K),K').$$ Therefore $\Hrm^0(c)$ is an isomorphism, $u\neq0$, and $\mu_\dg$ is invertible.
\end{proof}
\subsubsection{Induction}
Let $H$ be a closed subgroup of $G$, which acts on the product $G\times X$ by $h\cdot(g,x)=(gh^{-1},hx)$. Let $G\times^H X$ be the quotient of this action. Then we have an isomorphism between quotient stacks $X/H\cong  (G\times^H X)/G$ and $T^*(X/H)\cong T^*((G\times^H X)/G)$.

Let $\mu_H:T^*X\longrightarrow\operatorname{Lie}(H)^*$ and $\mu_G:T^*(G\times^H X)\longrightarrow\operatorname{Lie}(G)^*$. Note that \[
    T^*(X/H)\cong \mu^{-1}(0)^R/H,
\] the derived $0$-fiber of $\mu_H$, and similarly for $G\times^H X$.  Note that $\mu_G^{-1}(0)^R/G\cong \mu_H^{-1}(0)^R/H$. { The following lemma is straightforward.}
\begin{lem}\label{prop_gr_induction}
We have the following commutative diagram.
	\[
    \xymatrix{ D^b_H\MHM(X)\ar[d]^{{\gr}}\ar[rrrr]^{\sim} &&&&D^b_G\MHM(G\times^H X)\ar[d]^{{\gr}}\\ \Coh_{H\times \mathbb{G}_m}(\mu_H^{-1}(0)^R)\ar[rrrr]^{\sim} &&&&\Coh_{G\times \mathbb{G}_m}(\mu_G^{-1}(0)^R). }
\]
\end{lem}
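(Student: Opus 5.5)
The plan is to build both horizontal equivalences from the same simplicial diagram, so that the commutativity is forced by the construction of $\gr$ in Proposition \ref{Prop:Hodge_gr_equiv} rather than checked directly.

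The top equivalence is the standard induction equivalence. Consider the smooth map $q: G\times X\to X$ and the quotient map $\pi: G\times X\to G\times^H X$. These make $G\times X$ into a $G\times H$-variety whose quotient stack is identified with both $X/H$ (via $q$, since $G$ acts freely) and $(G\times^H X)/G$ (via $\pi$). So $D^b_H\MHM(X)\cong D^b_{G\times H}\MHM(G\times X)\cong D^b_G\MHM(G\times^H X)$, each equivalence being given by smooth pullback along the relevant quotient map, i.e.\ by descent along the limit diagram \eqref{eq:pullback_diagram} for the appropriate group. The bottom equivalence is likewise the composite of $\delta_q$ and $\delta_\pi$ (in the sense of Lemma \ref{lemma_gr_pullback}) on cotangent stacks. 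These identify
\[
T^*(X/H)\cong T^*((G\times X)/(G\times H))\cong T^*((G\times^H X)/G),
\]
and the underlying identification of derived zero fibers is $\mu_H^{-1}(0)^R/H\cong \mu_G^{-1}(0)^R/G$.

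The key steps, in order, are as follows.
\begin{itemize}
\item[(1)] Check that for the free quotient $q:G\times X\to X$ (a $G\times H$-equivariant smooth map, with $G$ acting trivially on $X$), the functor $\delta_q$ is exactly the equivalence $\Coh_{H\times\mathbb G_m}(\mu_H^{-1}(0)^R)\cong \Coh_{G\times H\times \mathbb G_m}(\mu_{G\times H}^{-1}(0)^R)$. Do the same for $\pi$.
\item[(2)] Apply Lemma \ref{lemma_gr_pullback} to $q$ and $\pi$. Here $G\times X\to X$ is $G\times H$-equivariant, with $G\times H$ acting on $X$ through $H$; similarly $\pi$ is equivariant, with $G\times H$ acting on $G\times^H X$ through $G$. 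This gives commuting squares $\gr\circ q^\dagger\cong\delta_q\circ\gr$ and $\gr\circ\pi^\dagger\cong\delta_\pi\circ\gr$.
\item[(3)] Invert $\pi^\dagger$ and $\delta_\pi$ and paste the two squares.
\end{itemize}

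The main obstacle is step (1) together with a subtlety in applying Lemma \ref{lemma_gr_pullback}. That lemma is stated for a single group $G$ acting on both source and target, so one must use the group $G\times H$ throughout. One also has to verify that $\delta_q$ is indeed an equivalence at the derived level, i.e.\ that the Lagrangian correspondence \eqref{eq:lagrangian_stack} for a map which is an isomorphism of quotient stacks is the identity correspondence. I would argue the latter directly: a morphism of smooth stacks that is an isomorphism has $\rho_q,\varpi_q$ isomorphisms, and $\delta_q$ is the induced identification of cotangent stacks. Functoriality of the Dolbeault construction (Step 1 of Proposition \ref{Prop:Hodge_gr_equiv}, via \cite{Chen}) guarantees that this identification agrees with the description through derived moment map fibers. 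I would not try to compare Hodge filtrations by hand; strictness and the compatibility with compositions from Proposition \ref{Prop:Hodge_gr_equiv} should handle everything once the stacks are matched.
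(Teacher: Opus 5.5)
The paper gives no argument beyond calling the lemma straightforward. Your roof
$X/H\xleftarrow{\;q\;}(G\times X)/(G\times H)\xrightarrow{\;\pi\;}(G\times^H X)/G$, whose legs are isomorphisms of stacks presented by smooth maps of varieties, is the natural device. However, your resolution of the obstacle in step (2) is wrong.

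If you apply Lemma \ref{lemma_gr_pullback} literally, with the single group $G\times H$ acting on $X$ through $H$, the source corners of the square are $D^b_{G\times H}\MHM(X)$ and $\Coh_{\Gbb_m}(T^*(X/(G\times H)))$. But $X/(G\times H)\cong X/H\times\mathsf{B}G$, not $X/H$. Since $G$ acts trivially, the $\gfrak^*$-component of the moment map vanishes identically, so the derived zero fiber is $\mu_H^{-1}(0)^R\times(\{0\}\times^R_{\gfrak^*}\{0\})$. On the Hodge side, endomorphisms pick up $H^*(\mathsf{B}G)$. In that square, $q^\dagger$ and $\delta_q$ are not equivalences, and $\delta_q$ is not the functor of your step (1). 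To reach $D^b_H\MHM(X)$ you would have to precompose with restriction of equivariance along $G\times H\to H$, and its compatibility with $\gr$ is exactly what is missing. The same happens for $\pi$ and $G\times H\to G$. So steps (1) and (2) concern different maps: step (1) needs $q$ as the stack map $(G\times X)/(G\times H)\to X/H$, which is equivariant along a group homomorphism rather than for a single group.

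The fix is a change-of-group version of Lemma \ref{lemma_gr_pullback}, proved the same way. The level-$n$ maps
\[
(G\times H)^n\times G\times X\to H^n\times X,\qquad (G\times H)^n\times G\times X\to G^n\times (G\times^H X)
\]
are, respectively, a projection and a projection times the $H$-torsor $\pi$. Both are smooth and intertwine the face maps of \eqref{eq:pullback_diagram} for the two groups. Step 2 of Proposition \ref{Prop:Hodge_gr_equiv} therefore gives levelwise squares compatible with compositions. Pass to the limit, and use \eqref{eq:descent} together with functoriality of $!$-pullback to identify the induced functor with $q^!_{Dol}$, resp.\ $\pi^!_{Dol}$. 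This yields $\gr\circ q^\dagger\cong\delta_q\circ\gr$ and $\gr\circ\pi^\dagger\cong\delta_\pi\circ\gr$ with the correct corners. Combined with your step (1) and the descent equivalences $q^\dagger,\pi^\dagger$, step (3) then goes through.
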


\subsection{Monodromic Hecke categories}\label{SS_Hecke_monodromic}
Let $G$ be a complex connected reductive group with Lie algebra $\gfrak$. Inside $G$, we fix a Cartan subgroup and a Borel subgroup $T\subset B$, with Lie algebras $\tfrak\subset \bfrak$. Let $U=[B,B]$ be the unipotent radical of $B$ with Lie algebra $\nfrak$. Denote the flag variety $G/B$ by $\Bcal$ and the basic affine space $G/U$ by $\wt{\Bcal}$.

Consider the right diagonal action of $T$ on $\wt{\Bcal}\times \wt{\Bcal}$ and define $Y:=(\wt{\Bcal}\times \wt{\Bcal})/T$. On $Y$ we have the diagonal left $G$-action and a $T$-action by $t(x,y)=(x,yt^{-1})$. The fibration $\wt{\Bcal}^2\to Y$ is $G$-equivariant.

We follow the definitions in \cite{LY}. Let $m:T\times T\to T$ be the multiplication map. A rank one character $D$-module on $T$ is a rank $1$ regular integrable connection equipped with an isomorphism $m^\dagger \Lcal\cong \Lcal\boxtimes \Lcal$ and a trivialization of the stalk at $e$. The isomorphism classes of all rank one character $D$-modules form a group, which we denote by $Ch(T)$. Suppose $\Lcal$ is of finite order, or equivalently, the character $\lambda\in \tfrak^*$ that determines the integrable connection is rational. Then there is a finite \'{e}tale cover
\[
    \alpha:\wt{T}\to T
\]
such that $\Lcal=(\alpha_\dagger \Ocal_{\wt{T}})^\theta$ with respect to $\theta=e^{2\pi i \lambda}:\Ker(\alpha)\to \C^\times$. There is an action of $W$ on $Ch(T)$ given by $w\Lcal=(w^{-1})^*\Lcal$. Let $W_\theta\subset W$ be the stabilizer of $\Lcal$, or equivalently the stabilizer of $\theta$.

Suppose a smooth variety $X$ has a $T$ action, and in particular a $\wt{T}$ action through the cover $\alpha$. Under the action of $\Ker(\alpha)$, $D^b_{\wt{T}}\MHM(X)$ decomposes into a direct sum of triangulated subcategories on which $\Ker(\alpha)$ acts by characters. Let $D^b_{\theta}\MHM(X)$ denote the direct summand associated to $\theta$. When $\theta=1$, we recover $D^b_{T}\MHM(X)$.

We further impose $G$-equivariance and set $\Hcal_\theta:=D^b_{G,\theta}\MHM(Y)$; its heart is denoted by $\Hcal_\theta^\heartsuit$.


Let $\pi:B\to B/U\cong T$ be the projection. On $\wt{\Bcal}$, we have the adjoint $B$-action given by $b\cdot g=bg\pi(b)^{-1}$. Consider the quotient $G\times^B\wt{\Bcal}$ of $G\times\wt{\Bcal}$ by the diagonal $B$-action:
\[
    b(g,x)=(gb^{-1},bxb^{-1}),g\in G,x\in\wt{\Bcal}.
\]
On $G\times^B\wt{\Bcal}$, we have a $G\times T$-action defined by $(h,t)(g,x)=(hg,xt^{-1})$. Suppose $g\in G,x\in \wt{\Bcal}$. The map \[
    G\times^B\wt{\Bcal}\xrightarrow{\sim} Y,\quad (g,x)\mapsto (g,gx)
\] defines a $G\times T$-equivariant isomorphism.

The $G\times T$-orbits in $Y$ are in bijection with the $B\times T$-orbits in $\wt{\Bcal}$ and the Weyl group $W$: for every $w\in W$ denote its lift in $N(T)$ still by $w$. Write $\Bcal_w=BwB/B$ and $\wt{\Bcal}_w=BwB/U$. 

Let $U_-$ be the opposite of $U$. Take $U_w=U\cap w\cdot U_-$. The embedding $U_w w T \to \wt{\Bcal}_w$ is an isomorphism, and so finite-rank regular integrable connections on $\wt{\Bcal}_w$ are in bijection with finite-dimensional representations of $\pi_1(T)$.

Let $\Gamma_w=\{(t,w^{-1}(t)t^{-1})\in T\times T\mid t\in T\}$ and $U^w=U\cap w\cdot U$. For any point $x\in \wt{\Bcal}_w$, its stabilizer in $\Ad(B)\times T$ is isomorphic to $\Gamma_w\ltimes U^w$.

Let $i_w:\{w\}\hookrightarrow\wt{\Bcal}_w$. Similar to \cite[Lemma 2.10]{LY}, we have the following lemma.
\begin{lem}\label{lem: mono}
The category $D^b\MHM_{B,\theta}(\wt{\Bcal}_w)$ is nonzero if and only if $w\in W_\theta$. In this case, we have an equivalence
	\[
    i_w^\dagger:\quad  D^b_{B,\theta}\MHM(\wt{\Bcal}_w)\xrightarrow{\sim} D^b_{\Gamma_w}\MHM(\{w\}).
\]
\end{lem}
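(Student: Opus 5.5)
The plan is to reduce the statement to the standard principle that equivariant sheaves on a homogeneous space are equivalent to equivariant sheaves on a point for the stabilizer, and then to track the monodromic twist. The group acting on $\wt{\Bcal}_w$ is $B\times\wt{T}$, with $B$ acting by the adjoint action and $\wt{T}$ acting on the right through $\alpha$. The orbit $\wt{\Bcal}_w$ is a single $B\times T$-orbit. By the description above, the stabilizer of the point $w=i_w(\pt)$ in $B\times T$ is $\Gamma_w\ltimes U^w$. Its preimage in $B\times\wt{T}$ is an extension $\wt{\Gamma}_w\ltimes U^w$ of $\Gamma_w\ltimes U^w$ by $\Ker(\alpha)$.

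First I would apply the induction equivalence, Lemma \ref{prop_gr_induction} (its MHM part), to the homogeneous space $\wt{\Bcal}_w\cong (B\times\wt T)/(\wt\Gamma_w\ltimes U^w)$. This gives that $i_w^\dagger$ induces an equivalence
\[
D^b_{B\times\wt T}\MHM(\wt{\Bcal}_w)\xrightarrow{\sim} D^b_{\wt\Gamma_w\ltimes U^w}\MHM(\pt).
\]
Since $U^w$ is unipotent, and hence contractible, the forgetful functor to $\wt\Gamma_w$-equivariance is an equivalence. Both sides decompose according to the action of $\Ker(\alpha)$, which acts through the central part of the stabilizer, and the equivalence respects these decompositions. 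So $D^b_{B,\theta}\MHM(\wt{\Bcal}_w)$ is identified with the $\theta$-summand of $D^b_{\wt\Gamma_w}\MHM(\pt)$.

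Next I would analyze $\wt\Gamma_w$. It is the preimage of $\Gamma_w=\{(t,w^{-1}(t)t^{-1})\}\cong T$, realized as the fiber product of $T$ and $\wt T$ over $T$ under the map $t\mapsto w^{-1}(t)t^{-1}$. An element $k\in\Ker(\alpha)$ embedded in it acts on equivariant objects over a point. The point is that $\Ker(\alpha)$ sits inside the identity component of $\wt\Gamma_w$ exactly to the extent that it lies in the image of the torus. Equivariance for a connected group forces its elements to act trivially on the fiberwise cohomology of objects over a point. Hence the $\theta$-summand is nonzero if and only if $\theta$ is trivial on $\Ker(\alpha)\cap(\wt\Gamma_w)^\circ$.

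Concretely, I would lift the map $t\mapsto w^{-1}(t)t^{-1}$ to $\tilde t\mapsto w^{-1}(\tilde t)\tilde t^{-1}$ on $\wt T$. This shows that $(\wt\Gamma_w)^\circ$ meets $\Ker(\alpha)$ in the elements $w^{-1}(k)k^{-1}$, up to the connected cover. The vanishing of $\theta$ on these elements means $\theta\circ w^{-1}=\theta$, that is, $w\in W_\theta$. When $w\in W_\theta$, I would identify the $\theta$-summand with $D^b_{\Gamma_w}\MHM(\{w\})$. To do this I would trivialize the extension using a splitting of $\wt\Gamma_w\to\Gamma_w$ on which $\theta$ extends. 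This is the same bookkeeping as in \cite[Lemma 2.10]{LY}.

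The main obstacle is this step: the precise computation of the component group of $\wt\Gamma_w$, and checking that $w\in W_\theta$ is exactly the condition for a twisted-equivariant object to exist. I would carry it out by following the argument of \cite[Lemma 2.10]{LY} verbatim, noting that it uses only the formal six-functor and equivariance properties, which hold for MHM as well.
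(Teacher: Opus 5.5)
Your argument is correct and is essentially the paper's route. The paper gives no proof beyond ``similar to \cite[Lemma 2.10]{LY}'', and your reduction (induction to the stabilizer $\wt\Gamma_w\ltimes U^w$, discarding the contractible $U^w$, and computing $\Ker(\alpha)\cap\wt\Gamma_w^\circ=\{w^{-1}(k)k^{-1}\mid k\in\Ker(\alpha)\}$, taking $\alpha$ to be the $N$-th power map so that $W$ acts on $\wt T$) is exactly that argument adapted to $\Ad(B)$-equivariance. The one step to fix is the last one: $\wt\Gamma_w\to\Gamma_w$ in general has no section, because its restriction to $\wt\Gamma_w^\circ$ is an isogeny with kernel $\{w^{-1}(k)k^{-1}\}$, which is usually nontrivial. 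Instead, use that $\Ker(\alpha)$ surjects onto $\pi_0(\wt\Gamma_w)$. Hence for $w\in W_\theta$ the character $\theta$ descends to $\pi_0(\wt\Gamma_w)$, and twisting by the corresponding rank-one object identifies the $\theta$-summand with the $\Ker(\alpha)$-trivial summand, i.e.\ with $D^b_{\Gamma_w}\MHM(\pt)$. Equivalently, the $\theta$-summand is $D^b_{\wt\Gamma_w^\circ}\MHM(\pt)$, and an isogeny of tori does not change this category with $\C$-coefficients.
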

Let \begin{align}
    \label{defn: loc sys L w theta} \Lbb_{w,\theta}\in D^b_{B,\theta}\MHM(\wt{\Bcal}_w)
\end{align}
be the rank-$1$ object corresponding to the Hodge structure $\C_{\{w\}}$ of type $(0,0)$. 
\begin{align}
    \nabla^e_{w,\theta}&=j_\dagger \Lbb_{w,\theta}  &\nabla_{w,\theta} &= \Ind_B^G \nabla^e_{w,\theta};\label{defn:nabla}\\ \Delta^e_{w,\theta}&=j_! \Lbb_{w,\theta} &\Delta_{w,\theta} &= \Ind_B^G \Delta^e_{w,\theta}; \label{defn:Delta}\\ IC^e_{w,\theta}&=j_{!\dagger} \Lbb_{w,\theta} &IC_{w,\theta} &= \Ind_B^G IC^e_{w,\theta}; \nonumber
\end{align}
where the objects in the second column are in $\Hcal_\theta^\heartsuit$. We call the objects $\nabla_{w,\theta}$ (resp., $\Delta_{w,\theta}$) costandard (resp., standard) objects. Their underlying $D$-modules are the standard and costandard objects of the corresponding highest weight category of equivariant monodromic regular holonomic $D$-modules, indexed by $W_\theta$.

\subsection{Character sheaves}
There is the horocycle correspondence:
\begin{align}
    \label{diagram_horocycle} G\xleftarrow{p} G\times \Bcal\xrightarrow{a}  Y
\end{align}
where $a:(g,h)\mapsto\ol{(\wt{h},g\wt{h})}$ with $\wt{h}$ being a lift of $h\in \Bcal$ in $\wt{\Bcal}$ and $p:(g,h)\mapsto g$.

Consider the functor \[
    \CH:=p_\dagger a^\dagger: \Hcal_\theta\to D^b_G\MHM(G).
\] %
Define $\CS_\theta^\heartsuit\subset\MHM_G(G)$ as the smallest Serre subcategory containing the objects $\Hcal^i\CH(\Fcal)$, where $\Fcal\in\Hcal_\theta^\heartsuit$ and $i\in\Z$. Further, let $\CS_\theta\subset D^b_G\MHM(G)$ denote the full subcategory consisting of complexes all of whose cohomology objects belong to $\CS_\theta^\heartsuit$. Since $\CS_\theta^\heartsuit$ is a Serre subcategory, the long exact cohomology sequence shows that $\CS_\theta$ is triangulated; its definition in terms of cohomology objects also makes it stable under the standard truncation functors. Its heart is precisely $\CS_\theta^\heartsuit$.
\begin{rem}\label{rem:twisted_CH}
Applying these results to the elliptic Hall category {(where the relevant groups are $G=\operatorname{GL}_n$)} requires a monodromic setting. Let $\xi\in  \gfrak^*$ be a rational character. Choose a central isogeny $q:\wt{G}\to G$ such that $q^*\xi$ is the differential of a character $\chi$ of $\wt{G}$. Then one can consider the monodromic derived category $D^b_{\xi,\chi}(Y)$, which is the $\chi$-isotypic component of $\wt{G}$-equivariant derived category. For $G=\GL_n$ and $\xi=\frac{d}{n}\tr$. Take the $n$-cover $q: \wt{G}=\SL_n\times \C^\times\to \GL_n$ such that $q^*\xi$ is the differential of $\chi(g,t)=t^d$. The horocycle correspondence \eqref{diagram_horocycle}, induction as well as the functors $\CH$ and $\HC$ are adaptable in this setting.
\end{rem}
\subsection{Functors $\gr_{\Hcal,\lambda}$ and $\gr_\CS$}

Fix a nondegenerate invariant bilinear form on $\gfrak$ and use it to identify $\gfrak\cong \gfrak^*$.
Use $\cdot$ to denote the adjoint action. Consider the moment map $\mu_G:T^*G\cong G\times \gfrak\to \gfrak$, $(g,x)\mapsto g\cdot x-x$. Define the derived commuting scheme $\Cfrak$ to be the derived zero fiber of $\mu_G:T^*G\cong G\times\gfrak\to\gfrak$. Its classical truncation is $t_0\Cfrak=\{(g,x)\in G\times\gfrak\mid g\cdot x=x\}$. 
The $G\times\C^\times$ action on $G\times \gfrak$:
\[
    (g,z)(h,x)=(ghg^{-1},zg\cdot x),\qquad g,h\in G,\ z\in\C^\times,\ x\in\gfrak,
\]
restricts to a $G\times\C^\times$ action on $\Cfrak$.

Moreover, 
\[
    T^*Y\cong \{(x,y,\alpha,\beta)\in Y\times \gfrak\times \gfrak|x^{-1}\cdot\alpha, y^{-1}\cdot \beta\in \bfrak,  x^{-1}\cdot \alpha\equiv y^{-1}\cdot \beta \text{ mod } \nfrak \}.
\]

Consider the moment map:
\begin{align*}
    \mu_Y:T^*Y \to \gfrak\times\tfrak, (x,y,\alpha,\beta)\mapsto (\alpha-\beta,y^{-1}\cdot\beta \text{ mod }\nfrak).
\end{align*}
Let $\wt{\Ncal}:=G\times^B \nfrak$. The derived zero fiber $\wt{\Zcal}^R$ of $\mu_Y$ is isomorphic to a $T$-torsor over $\Zcal^R:=\wt{\Ncal}\times^R_\gfrak \wt{\Ncal}$, pullback from $Y\to \Bcal^2$. The $G\times\C^\times$-action on $\wt{\Ncal}$:
\[
    (g,z)(\bfrak,x)=(g\cdot \bfrak,zg\cdot x)
\] induces a $G\times\C^\times$-action on $\Zcal^R$.

For a derived scheme $X$ with an action of an affine algebraic group $H$, let $\Coh_H(X)$ denote the full subcategory of the quasi-coherent derived category consisting of complexes with bounded coherent cohomology. Let $\Coh_H(X)^\heartsuit$ denote the heart of $\Coh_H(X)$ under the usual t-structure.

We apply Proposition \ref{Prop:Hodge_gr_equiv} to the setting of $\Hcal_\theta$ and $\CS_\theta$. On the one hand, we have a functor $\wt{\gr}_\Hcal: \Hcal_\theta \to\Coh_{G\times \C^\times}(\wt{\Zcal}^R)$. On the other hand, let $\Coh_{G\times\C^\times,\Nilp}(\Cfrak)$ denote the full subcategory of $\Coh_{G\times\C^\times}(\Cfrak)$ whose objects have set-theoretic support contained in $G\times \Nilp$. For every $(N,F)\in \CS_\theta^\heartsuit$, it is well-known (c.f \cite{ginzburg_admissible}) that $\gr^F(N)$ has set-theoretic support in $G\times \mathcal{N}$. Therefore, we also have a functor $\gr_\CS:\CS_\theta\to \Coh_{\Nilp,G\times\C^\times}(\Cfrak)$.


Moreover, equivariant descent along the $T$-bundle $\wt\Zcal^R\to\Zcal^R$ gives a {  t-exact} equivalence
\[
    \Desc_T:\Coh_{G\times T\times\C^\times}(\wt\Zcal^R) \xrightarrow{\sim}\Coh_{G\times\C^\times}(\Zcal^R).
\]

For every $M\in \Hcal_\theta$, $\wt{\gr}_\Hcal(M)$ is $\wt{T}$-equivariant. Let $\wt{\lambda}\in \Lie(\wt{T})^*$ be the integral lift of $\lambda$. Then the kernel of the finite cover $\mu:\wt{T}\to T$ acts on $\wt{\gr}_{\Hcal}(M)$ by $\wt{\lambda}$. Thus $\wt{\gr}_{\Hcal}(M)\otimes \C_{-\tilde{\lambda}}$ is $T$-equivariant. We define
\[
    \gr_{\Hcal,\lambda}(M):=\Desc_T(\wt{\gr}_{\Hcal}(M)\otimes \C_{-\tilde{\lambda}}).
\]
Write
\begin{equation}
    \label{eq:Desc_shorthand} D_\lambda(F):=\Desc_T(F\otimes \C_{-\wt{\lambda}}).
\end{equation}

\subsection{The coherent character functor $\wt{\CH}$}
\label{SSS:affine_CH}
Let \[
    \Yfrak:=\{(g,h,x)\in G\times\Bcal\times\gfrak\mid g^{-1}\cdot x\in h\cdot\nfrak\}.
\] Take $\wt{\Xfrak}$ to be the derived zero fiber of the map
$\Yfrak \to \gfrak, \quad (g,h,x)\mapsto g\cdot x-x.$ Define the map $\rho:\wt{\Xfrak}\to\wt{\Zcal}^R$  induced by the map:
\[
    \rho': \Yfrak\to  T^*Y, \quad (g,h,x)\mapsto (a(g,h),g^{-1}\cdot x,x).
\]
Let $\varpi:\wt{\Xfrak}\to\Cfrak$, $(g,h,x)\mapsto (g,x)$. Let $q:\wt{\Zcal}^R\to\Zcal^R$ be the $T$-torsor, and set
\[
    r:=q\circ\rho:\wt{\Xfrak}\to\Zcal^R,\qquad \varpi:\wt{\Xfrak}\to\Cfrak.
\]
For a map $V\to\Bcal$ from a derived scheme $V$, let $\omega^V_\Bcal$ denote the pullback of the canonical bundle of $\Bcal$ to $V$. Write $\AH:=\Coh_{G\times\C^\times}(\Zcal^R)$ and $b:=\dim(\Bcal)$.
\begin{defn}
\label{defn: CH}
Define the functor
\[
    \wt{\CH}(-):=\varpi_*\bigl(r^*(-)\otimes\omega^{\wt{\Xfrak}}_\Bcal\langle-b\rangle\bigr):\ \AH \longrightarrow\Coh_{\Nilp,G\times\C^\times}(\Cfrak).
\]
\end{defn}
The theorem below justifies the target of $\wt{\CH}$.
\begin{thm}
\label{thm: grCH}
The functor $\wt{\CH}$ maps to $\Coh_{\Nilp,G\times\C^\times}(\Cfrak)$. Moreover, the diagram
\begin{align}
    \label{diagram_grCH} \xymatrix{ \Hcal_\theta \ar[d]^{\gr_{\Hcal,\lambda}}\ar[rrr]^{\mathrm{CH}} &&&\mathcal{CS}_\theta \ar[d]^{\gr_{\mathcal{CS}}}\\ \mathcal{AH} \ar[rrr]^-{\wt{\CH}} &&&\Coh_{\Nilp,G\times\C^\times}(\Cfrak) }
\end{align}
commutes.
\end{thm}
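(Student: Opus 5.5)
The plan is to factor $\CH=p_\dagger a^\dagger$ through the horocycle correspondence \eqref{diagram_horocycle} and treat each piece with the compatibilities already established in Section \ref{SS_assoc_graded_general}. The map $a:G\times\Bcal\to Y$ is $G$-equivariant and smooth: it is a fibration with fiber $T$, or $\wt T$ in the monodromic setting. The map $p:G\times\Bcal\to G$ is $G$-equivariant and projective. For $M\in\Hcal_\theta$ we will use Lemma \ref{lemma_gr_pullback} to get $\gr(a^\dagger M)\cong\delta_a(\gr M)$, and Proposition \ref{prop:gr_pushforward} to get $\gr(p_\dagger a^\dagger M)\cong\gamma_p\delta_a(\gr M)$. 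Both hold as functors on the equivariant dg categories, and the monodromic twist is handled as in Remark \ref{rem:twisted_CH}. Working with $\wt G$ and $\wt T$-equivariance and then twisting by $\C_{-\wt\lambda}$ before descending along $q$ produces $\gr_{\Hcal,\lambda}$. The rest of the proof is the geometric identification $\gamma_p\circ\delta_a\cong\wt\CH\circ D_\lambda$ as functors on $\Coh_{G\times\wt T\times\C^\times}(\wt\Zcal^R)$.

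For that identification we make the Lagrangian correspondences \eqref{eq:lagrangian_stack} explicit.

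For $a$: the stack $(G\times\Bcal)/G\cong G/\Ad(B)$, so $T^*((G\times\Bcal)/G)$ is the derived zero fiber of the moment map on $T^*(G\times\Bcal)$. The composite $\rho_a\circ\varpi_a^{-1}$ sends a covector $(x,y,\alpha,\beta)$ on $Y$, pulled back along $a$, to the pair $(g,h,x)$ with $x=\alpha$ and $g^{-1}\cdot x=\beta$. The fiber-product condition $x^{-1}\cdot\alpha\equiv y^{-1}\cdot\beta$ mod $\nfrak$, together with the zero-fiber condition, gives exactly $g^{-1}\cdot x\in h\cdot\nfrak$ and $g\cdot x=x$. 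In this way $(G\times\Bcal)/G\times_{Y/G}T^*(Y/G)$ is identified with $\wt\Xfrak/G$, with $\varpi_a$ corresponding to $\rho$ and $\rho_a$ to the closed embedding into $T^*((G\times\Bcal)/G)$.

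For $p$: $(G\times\Bcal)/G\times_{G/G}T^*(G/G)$ is the derived fiber over $\Cfrak$. Composing the two correspondences, the closed embedding $\rho_a$ is followed by $\rho_p^*$. Derived base change of the zero fiber, using that the moment-map equations on $G\times\Bcal\times\gfrak$ cut out $\wt\Xfrak$ inside $T^*$, shows that $\rho_p^*\rho_{a,*}\varpi_a^*$ equals pulling back along $\wt\Xfrak\to\wt\Zcal^R$ and regarding the result as a sheaf on $(G\times\Bcal)/G\times_{G/G}T^*(G/G)$.

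The twist in $\gamma_p$ is $\omega_{G\times\Bcal/G}=\omega_\Bcal$ together with the grading shift $\langle\dim G-\dim(G\times\Bcal)\rangle=\langle -b\rangle$. This matches $\omega^{\wt\Xfrak}_\Bcal\langle -b\rangle$ in Definition \ref{defn: CH}. Pushing forward along $\varpi_p$ then gives $\varpi_*$.

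The last piece is the $T$-torsor. Since $a$ has fibers $T$, $\delta_a$ involves the torsor $\wt\Zcal^R\to\Zcal^R$ through $\rho'$. On $\wt T$-equivariant sheaves with $\ker$ acting by $\wt\lambda$, pullback from $\wt\Zcal^R$ followed by forgetting to $G$-equivariance agrees with descending via $D_\lambda$ and then pulling back along $r=q\circ\rho$. Here the $T$-action on $\wt\Xfrak$ is trivial, so only the isotypic twist matters.

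The support claim follows from commutativity together with the set-theoretic nilpotent support of $\gr$ of character sheaves (\cite{ginzburg_admissible}), at least on the essential image. For a direct argument valid on all of $\AH$, note that $x\in h\cdot\nfrak$ forces $x$ to be nilpotent on $\wt\Xfrak$. Properness of $\varpi$, which factors through the projective $\Bcal$, gives coherence and boundedness.

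I expect the main obstacle to be the derived-geometric identification of the composite correspondence with $\wt\Xfrak$. One has to check that the derived fiber products, not just their classical truncations, match: Tor-independence or base change for the derived zero fibers, equivalently a transversality check in the cotangent-complex computation. One also has to track the $\C^\times$-weights and the canonical bundle twists so that no extra shift $[\,\cdot\,]$ or $\langle\cdot\rangle$ is left over. I would first verify this on the level of classical schemes with explicit Koszul complexes for the moment-map equations, then promote it to the derived statement using that all maps involved are quasi-smooth.
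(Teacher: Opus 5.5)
Your proposal follows the same route as the paper's proof. You factor $\CH=p_\dagger a^\dagger$, apply Lemma~\ref{lemma_gr_pullback} to the smooth map $a$ and Proposition~\ref{prop:gr_pushforward} to the projective map $p$, and identify the composite of the two Lagrangian correspondences (with $\mu_T=0$ imposed) with $\wt{\Xfrak}$ by derived base change. You then match the twist $\omega_\Bcal\langle -b\rangle$ and conclude via $q^*\gr_{\Hcal,\lambda}(M)\cong\wt{\gr}_\Hcal(M)\otimes\C_{-\wt{\lambda}}$ and $r=q\circ\rho$. Two intermediate claims are wrong, however, and should be corrected.

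First, $a:G\times\Bcal\to Y$ is smooth with fibers isomorphic to $U$, not $T$; its relative dimension is $\dim(G\times\Bcal)-\dim Y=b$. The $T$-torsor $\wt{\Zcal}^R\to\Zcal^R$ is pulled back from $Y\to\Bcal^2$. It enters through the $T$-action on $Y$ and the $\wt{T}$-equivariance of $\wt{\gr}_\Hcal(M)$, not through $a$.

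Second, $(G\times\Bcal)/G\times_{Y/G}T^*(Y/G)$ is not $\wt{\Xfrak}/G$: even after imposing $\mu_T=0$ the virtual dimensions differ by $b$. With $X=G\times\Bcal$, $\wt{\Xfrak}$ is instead the derived fiber product of $\wt{\Zcal}^R\times_Y X$ and $\Cfrak\times_G X$ over $Z^R_X$. That is the square your base-change sentence actually needs.

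The step you single out as the main obstacle is settled in the paper by a direct transversality computation. A ``classical check plus quasi-smoothness'' would not by itself determine the derived structure. At $(g,\bfrak)$ one has $\Ker(da)=(g\cdot\nfrak)\times\{0\}$ and $\Ker(dp)=\{0\}\times\gfrak/\bfrak$, which meet trivially. Hence the images of the two codifferentials are transverse subbundles of $T^*X$, and their derived intersection is the classical variety $\{(g,\bfrak',x)\mid g^{-1}\cdot x\in\bfrak'\}$. Imposing $g\cdot x-x=0$ and the $\mu_T$-equation as derived zero loci on this variety then gives exactly $\wt{\Xfrak}$.

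Finally, for the first assertion properness of $\varpi$ is not enough. You also need $r^*$ to preserve bounded coherent complexes, i.e.\ $r$ must have finite Tor-dimension. The paper obtains this by exhibiting $r$ as the derived base change of $\Yfrak\to T^*\Bcal^2$ along $\Zcal^R\to T^*\Bcal^2$.
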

\begin{rem}
Note that for an integral weight $\mu$, $\gr_{\Hcal,\lambda+\mu}(-)\cong \gr_{\Hcal,\lambda}(-)\otimes \Ocal(-\mu,\mu)$ while $r^*(-\mu,\mu)$ is trivial. This explains why $\gr_\CS\circ \CH$ does not depend on the choice of $\lambda$.
\end{rem}
\begin{proof}
Put $X:=G\times\Bcal$, with the $G$-action $k\cdot(g,h)=(kgk^{-1},kh)$, and let $Z_X^R$ be the derived zero fiber of the moment map $\mu_X:T^*X\to\gfrak$.

Combine the $G$-equivariant Lagrangian correspondences \eqref{eq:lagrangian_stack} for $a$ and $p$, and further impose the $\mu_T=0$ condition for $T^*Y$. { Since $\wt{\Zcal}^R$ is the derived zero fiber of the moment map for the $G\times T$-action on $T^*Y$, the following diagram commutes}.
\begin{align}
    \label{diagram_lagrangian_horocycle} \xymatrix{ \wt{\Zcal}^R&\wt{\Zcal}^R\times_Y X\ar[l]_{\varpi_a}\ar[r]^{\rho_a}&Z_X^R\\ &\wt{\Xfrak}\ar[u]\ar[r]\ar[lu]^\rho\ar[rd]_\varpi &\Cfrak\times_G X\ar[d]_{\varpi_p}\ar[u]^{\rho_p}\\ &&\Cfrak }
\end{align}
We first show that the square in diagram \eqref{diagram_lagrangian_horocycle} is derived Cartesian.

Consider
\[
    \Xfrak':=(T^*Y\times_Y X)\times^R_{T^*X}(T^*G\times_G X).
\]
We claim that it is concentrated in a single cohomological degree and
\begin{align}
    \label{eq: X tilde} \Xfrak'\cong\{(g,\bfrak',x)\in G\times\Bcal\times\gfrak\mid g^{-1}\cdot x\in\bfrak'\}.
\end{align}
By $G$-equivariance it suffices to compute the fiber {of the projection $\mathfrak{X}'\rightarrow X$} at $(g,\bfrak)\in X$. Right-translation trivializations give $T^*_{(g,\bfrak)}X\cong\gfrak\times\nfrak$ and
\[
    T_{a(g,\bfrak)}Y\cong \bigl(\gfrak/\nfrak\times\gfrak/(g\cdot\nfrak)\bigr)/\tfrak_g, \qquad\tfrak_g:=\{(\tau,g\cdot\tau)\mid\tau\in\tfrak\}.
\]
For a lift $\hat\eta\in\gfrak$ of $\eta\in\gfrak/\bfrak$, the differentials are
\begin{align*}
    da:\gfrak\times\gfrak/\bfrak &\longrightarrow\bigl(\gfrak/\nfrak\times\gfrak/(g\cdot\nfrak)\bigr)/\tfrak_g, &(\xi,\eta)&\longmapsto\ol{(\hat\eta,\xi+g\cdot\hat\eta)},\\ dp:\gfrak\times\gfrak/\bfrak &\longrightarrow\gfrak, &(\xi,\eta)&\longmapsto\xi.
\end{align*}
A direct check gives
\[
    \Ker(da)=(g\cdot\nfrak)\times\{0\},\qquad \Ker(dp)=\{0\}\times\gfrak/\bfrak.
\]
The images of the two codifferentials are the annihilators of these kernels, and hence
\begin{align}
    \label{iso: X fiber} \Xfrak'_{(g,\bfrak)} =\im(d^*a)\cap\im(d^*p) =(\Ker(da)+\Ker(dp))^\perp =(g\cdot\bfrak)\times\{0\}.
\end{align}
Moreover, $\Ker(da)\cap\Ker(dp)=\{0\}$, so these subbundles intersect transversely. Their derived intersection is therefore classical, proving \eqref{eq: X tilde}. Under \eqref{eq: X tilde} the projections to $T^*Y$ and $T^*G$ are $\rho'$ and $(g,h,x)\mapsto(g,x)$. The compatibility of the co-differentials with the moment maps and derived base change identify the fiber product $(\wt{\Zcal}^R\times_Y X)\times^R_{Z_X^R}(\Cfrak\times_G X)$ with the derived locus in $\Xfrak'$ cut out by
\[
    g\cdot x-x=0,\qquad g^{-1}\cdot x=0\quad\text{in }h\cdot\bfrak/h\cdot\nfrak.
\]
The second equation cuts out $\Yfrak$. Thus $\wt{\Xfrak}$ is the derived zero fiber of the map $\Yfrak \to \gfrak$, $(g,h,x)\mapsto g\cdot x-x$ as desired.

The map from smooth variety $\Yfrak$ to $T^*\Bcal^2$ sends $(g,h,x)$ to $(g^{-1}\cdot x,x)$ over $(h,gh)$. Its moment map is $g^{-1}\cdot x-x$. Consequently, $r$ is the derived base change of this map along $\Zcal^R\to T^*\Bcal^2$ and has finite Tor-dimension. Also, $t_0\wt{\Xfrak}$ is the subvariety
\[
    \{(g,h,x)\mid g\cdot x=x,\ x\in h\cdot\nfrak\}\subset t_0\Cfrak\times\Bcal.
\]
Thus $s$ is proper and its image has set-theoretic support in $G\times\Nilp$. Therefore, the functor $\wt{\CH}$ maps to $\Coh_{\Nilp,G\times\C^\times}(\Cfrak)$.

Applying Proposition \ref{prop:gr_pushforward}, Lemma \ref{lemma_gr_pullback}, and derived base change to \eqref{diagram_lagrangian_horocycle}, we obtain a natural isomorphism:
\begin{align}
    \label{eq: gr CH} \gr_\CS(\CH(-)) \cong \varpi_*\bigl( \rho^*(\wt{\gr}_\Hcal(-)) \otimes\omega^{\wt{\Xfrak}}_\Bcal\langle-b\rangle \bigr).
\end{align}
By the definition of $\gr_{\Hcal,\lambda}$ and equivariant descent along $q$, we have
\[
    q^*\gr_{\Hcal,\lambda}(-) \cong\wt{\gr}_\Hcal(-)\otimes\C_{-\wt{\lambda}}.
\]
Since $r=q\circ\rho$, forgetting the $T$-equivariance, \eqref{eq: gr CH} becomes
\[
    \gr_\CS(\CH(M)) \cong \varpi_*\bigl( r^*(\gr_{\Hcal,\lambda}(M)) \otimes\omega^{\wt{\Xfrak}}_\Bcal\langle-b\rangle \bigr) =\wt{\CH}(\gr_{\Hcal,\lambda}(M))
\]
as required.
\end{proof}



\section{Associated graded of (co)standard objects}\label{SS_assoc_graded}
We compute the images of standard and costandard objects in $\Hcal_\theta$ introduced in \eqref{defn:nabla} and \eqref{defn:Delta} under the map $\gr_{\Hcal,\lambda}$ from diagram \eqref{diagram_grCH}. Note that $\nabla_{w,\theta}^e$ and $\Delta_{w,\theta}^e$ are obtained by taking the $\dagger$ and $!$ extension to the Bott-Samuelson resolution $\wt{BS}_w$ of the integrable connection \eqref{defn: loc sys L w theta}, and then pushing forward to to $\wt{\Bcal}$. Given Proposition \ref{prop:gr_pushforward}, one may carry out the computation on $\wt{BS}_w$ where the extension is across a simple normal crossing (SNC) divisor. In this setting, Saito's theory describes the Hodge filtration explicitly in terms of Deligne's canonical lattice.

{ Here is our motivation for this computation. As we mentioned in the introduction the main computation of this paper is that of $\widetilde{\operatorname{CH}}(\omega^a)$, a $\operatorname{GL}_r\times \C^\times$-equivariant coherent sheaf on the commuting stack $\mathfrak{C}_r$.  The direct computation seems to be very hard if at all possible. Instead we use Theorem \ref{thm: grCH}. It turns out that the object $\omega^a$ (we will work with a version of such objects for $\operatorname{SL}_r$) arises as the Hodge associated graded of $\nabla_{w,\theta}$ -- that in the case of interest coincides with $\Delta_{w,\theta}$ -- for suitable $w,\theta$. The main results of these section, Theorem \ref{thm: gr general theta} and its special case, Corollary \ref{coro: gr nabla_theta} explain how to compute Hodge associated graded sheaves of these objects that, as we will see later, include $\omega^a$. }

\subsection{Steinberg variety}
Let $\Zcal_{\gfrak}:=\wt{\gfrak}\times^R_{\gfrak}\wt{\gfrak}$, which coincides with the nonderived fiber product. Take $\Zcal:=t_0(\Zcal^{R}) =\wt{\Ncal}\times_{\Ncal}\wt{\Ncal}.$

In the regular semisimple part $\wt{\gfrak}^{rs}\times_{\gfrak^{rs}}\wt{\gfrak}^{rs}$, let $\Lambda_{\gfrak,w}$ be the graph of $w$ such that its image in $\Bcal^2$ is the orbit of $(B,wB)$. We write
\[
    \Lambda_w:=\ol{\Lambda_{\gfrak,w}}\cap\Zcal
\]
for the scheme-theoretic {(classical), not derived} intersection inside $\Zcal_\gfrak$.

Let $\nfrak=[\bfrak,\bfrak]$. Define
\[
    \Zcal^e:=\{(h,x)\in\Bcal\times\nfrak\mid x\in h\cdot \nfrak\}.
\]
The $G$-action on $\Zcal$ induces an isomorphism $f:G\times^B\Zcal^e\xrightarrow{\sim}\Zcal$, and set-theoretically
\[
    \Zcal^e=\bigsqcup_{w\in W}T^*_{\Bcal_w}\Bcal.
\]

Similar to $\Zcal^e$, take \[
    \Zcal_\gfrak^e:=\{(\bfrak',x)\in\Bcal\times \bfrak,x\in \bfrak'\}.
\] There is {an isomorphism} $\Zcal_\gfrak\cong G\times^B \Zcal_\gfrak^e.$ The Borel subalgebras that contain a given regular semisimple element are in bijection with $W$. Therefore, writing $\Lambda_{\gfrak,w}^{e }=\{(\bfrak',x)\in\Bcal_{w}\times \bfrak^{rs},x\in \bfrak'\}$, we have
\[
    \{(\bfrak',x)\in\Bcal\times \bfrak^{rs},x\in \bfrak'\}=\sqcup_w \Lambda_{\gfrak,w}^{e }.
\]
Hence we may rewrite the graph $\Lambda_{\gfrak,w}\subset \Zcal_\gfrak$ as $\Lambda_{\gfrak,w}= G\times^B \Lambda_{\gfrak,w}^{e }$. As a result,

\[
    \Lambda_w\cong G\times^B \Lambda_w^e, \text{ where }\Lambda_w^e:=\left(\ol{\Lambda_{\gfrak,w}^{e }}\cap \Zcal^e\right).
\]


Let $\Ocal_X(\alpha)$ be the pullback of the line bundle on $\Bcal$ corresponding to the weight $\alpha$, i.e., $\ind_B^G \C_{-\alpha}$.
Under this convention, dominant weights induce ample line bundles on $\Bcal$. 
For $\mu,\nu\in X^*(T)$, there is a line bundle $\Ocal_{\Zcal}(\mu,\nu)$ on $\Zcal$ restricted from $\Ocal_{\wt{\Ncal}}(\mu)\boxtimes\Ocal_{\wt{\Ncal}}(\nu)$. Let $\rho$ be the half sum of positive roots. We note that even when $\rho$ is not in $X^*(T)$, $\Ocal_\Zcal(\rho,\rho)$ is still a {$G$-equivariant line bundle on $\Zcal$}. 
\subsection{Bott-Samelson resolution}\label{SSS:BS}
Throughout this subsection $w\in W$ is fixed together with a reduced expression $w=s_{k_1}\cdots s_{k_\ell}$. For a simple reflection $s$, write $P_s$ for the associated minimal parabolic. Let
\[
    \wt{BS}_w:=P_{s_{k_1}}\times^B \cdots \times^B P_{s_{k_\ell}}/U,\qquad BS_w:=P_{s_{k_1}}\times^B \cdots \times^B P_{s_{k_\ell}}/B.
\]
Let $\wt p:\wt{BS}_w\to\wt{\Bcal}$ and $p:BS_w\to\Bcal$ be the multiplication maps. We identify $BS_w$ with the variety of sequences $(\bfrak_0,\dots,\bfrak_\ell)$ of Borel subalgebras with $\bfrak_0=\bfrak$ and $\bfrak_{j-1},\bfrak_j$ in relative position $\leq s_{k_j}$, and set
\begin{equation}
    \label{eq:E_D_definition} E_j:=\{\bfrak_{j-1}=\bfrak_j\}\subset BS_w,\qquad D_j:=E_j\times_\Bcal\wt{\Bcal}\subset\wt{BS}_w.
\end{equation}
Write $r_w:\wt{BS}_w\to BS_w$ for the $T$-torsor, canonically identified with the pullback of $\wt\Bcal\to\Bcal$:
\[
    \wt{BS}_w\ \xrightarrow{\ \sim\ }\ BS_w\times_{\Bcal}\wt{\Bcal},\qquad [p_1,\dots,p_\ell]_U\longmapsto \bigl([p_1,\dots,p_\ell]_B,\ p_1\cdots p_\ell U\bigr).
\]
The divisors $E_1,\dots,E_\ell$ are smooth and meet transversally; their complement is isomorphic to the open cell ${\Bcal}_w$. Writing $E:=\sum_jE_j$, we also have that
\[
    D:=\wt{BS}_w\setminus \wt{\Bcal}_w=\bigcup_{j=1}^\ell D_j=r_w^*E,
\]
which is an SNC divisor.

{ Note that $\wt{\Bcal}_w$ is a $T$-bundle over the affine space $\Bcal_w$ so $\pi_1(\wt{\Bcal}_w)\xrightarrow{\sim} \pi_1(T)$. On the other hand, $\pi_1(T)$ is naturally identified with the cocharacter lattice $X_*(T)$}. For $1\leq j\leq \ell$, put
\begin{align}
    \label{def: gamma} \gamma_j:=s_{k_\ell}s_{k_{\ell-1}}\cdots s_{k_{j+1}}(\alpha^\vee_{k_j}).
\end{align}
\begin{lem}\label{lem: BS monodromy}
A positive meridian around $D_j$ represents the class $\gamma_j$ in $\pi_1(\wt{\Bcal}_w)$. By a positive meridian, we mean a counterclockwise loop in a transverse complex disk at a smooth point of $D_j$ based in the complement.
\end{lem}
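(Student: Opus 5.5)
We have to show that a small loop around $D_j$ is sent to $\gamma_j$ under $\pi_1(\wt{\Bcal}_w)\cong\pi_1(T)=X_*(T)$. The plan is to reduce to a one-parameter computation in a $\mathbb{P}^1$-fibre: build an explicit rational curve in $\wt{BS}_w$ meeting $D_j$ transversally at one point, with all other points in the open cell or mapping to a fixed point of $\Bcal_w$.

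\textbf{Step 1: the curve.} Fix all factors except the $j$-th. For $z\in\C$ put
\[
c(z)=[\,\dot s_{k_1},\dots,\dot s_{k_{j-1}},\,x_{-\alpha_{k_j}}(z),\,\dot s_{k_{j+1}},\dots,\dot s_{k_\ell}\,]_U ,
\]
where $x_{-\alpha}$ is the negative root subgroup and $\dot s$ are lifts in $N(T)$. At $z=0$ the $j$-th factor lies in $B$, so $c(0)\in D_j$. For small $z\neq0$ it avoids every other $D_i$ and $D_j$ itself, so $c(z)\in\wt{\Bcal}_w$. Transversality at $z=0$ holds because $x_{-\alpha_{k_j}}$ parametrizes the $\mathbb{P}^1$-fibre of $BS_w\to P_{s_{k_1}}\times^B\cdots\times^B P_{s_{k_{j-1}}}/B$ near its point $B$. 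A positive meridian is therefore $\theta\mapsto c(\epsilon e^{i\theta})$.

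\textbf{Step 2: the $SL_2$ identity.} Using the Bruhat decomposition in $SL_2$, we have for $z\neq0$
\[
x_{-\alpha}(z)=x_\alpha(z^{-1})\,\alpha^\vee(z^{-1})\,\dot s\,x_\alpha(z^{-1})
\]
up to sign conventions, with $\alpha=\alpha_{k_j}$. The right factor $x_\alpha(z^{-1})\in U$ can be pushed to the right through $\dot s_{k_{j+1}}\cdots\dot s_{k_\ell}$ modulo $B$-equivalences. On the open cell we then use the isomorphism $U_wwT\cong\wt{\Bcal}_w$ to read off the $T$-coordinate. Conjugating the torus element $\alpha^\vee(z^{-1})$ past $\dot s_{k_{j+1}}\cdots\dot s_{k_\ell}$, and past everything to its left (which changes only the $U_w$-part), shows that the $T$-component of $c(z)$ is
\[
t_0\cdot\bigl(s_{k_\ell}\cdots s_{k_{j+1}}(\alpha^\vee_{k_j})\bigr)(z^{\pm1})
\]
for a constant $t_0$. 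The $U$-parts vary in a contractible space and do not affect the class.

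\textbf{Step 3: the sign.} Going once counterclockwise in $z$ gives $\pm\gamma_j$ in $X_*(T)$. The main obstacle is fixing the sign and orientation: whether $U$ is quotiented on the right, whether $T$ acts by $xt^{-1}$, and whether the exponent is $z$ or $z^{-1}$. I would settle this by computing the case $\ell=1$ directly in $SL_2/U\cong\C^2\setminus0$. There $D_1$ is the $\mathbb{P}^1$-fibre over the point $B/B$, and the identification $\pi_1(\wt{\Bcal}_w)\cong\pi_1(T)$ is taken via the orbit map $t\mapsto wtU$. The resulting convention then propagates to general $\ell$, since the conjugation in Step 2 transports the $\ell=1$ answer by $s_{k_\ell}\cdots s_{k_{j+1}}$.
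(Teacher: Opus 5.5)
Your proof is correct, but it takes a different route from the paper's.

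\textbf{What the paper does.} It treats $w=s$ by passing to a Levi of semisimple rank one. It checks $\SL_2$, $\mathrm{PGL}_2$ and $\GL_2$ separately, reading the meridian class off the degrees of the line bundles presenting $G/U\to\Pbb^1$ (Lemma~\ref{lem: P1 meridian}). The passage to a general reduced word is then a one-line assertion that the factors to the right of the $j$-th one transport the cocharacter by $s_{k_\ell}\cdots s_{k_{j+1}}$.

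\textbf{What your route buys.} Your test curve, combined with the identity $x_{-\alpha}(z)=x_\alpha(z^{-1})\alpha^\vee(z^{-1})\dot s\,x_\alpha(z^{-1})$, is uniform in $G$ and actually proves that transport step. The identity holds exactly when $\dot s$ is the image of $\left(\begin{smallmatrix}0&-1\\1&0\end{smallmatrix}\right)$ under the root homomorphism $\SL_2\to G$. It also fixes the sign without any $\ell=1$ calibration.
\begin{itemize}
\item In $\wt p(c(z))=\dot s_{k_1}\cdots\dot s_{k_{j-1}}\,x_{-\alpha}(z)\,\dot s_{k_{j+1}}\cdots\dot s_{k_\ell}\,U$, the torus factor must be moved past $\dot s_{k_j}$ as well, which you omitted. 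Doing so yields $(s_{k_\ell}\cdots s_{k_j}\alpha^\vee_{k_j})(z^{-1})=\gamma_j(z)$. That missing $\dot s_{k_j}$ is exactly the sign flip you were unsure about.
\item The two copies of $x_\alpha(z^{-1})$ become root elements for $s_{k_\ell}\cdots s_{k_{j+1}}\alpha_{k_j}$ (absorbed into $U$ on the right) and for $s_{k_1}\cdots s_{k_{j-1}}\alpha_{k_j}$ (lying in $U_w$ on the left).
\item The positivity of these two roots is exactly where reducedness of the word enters. You should state this rather than appeal to ``$B$-equivalences''.
\end{itemize}
Hence $\wt p(c(z))=u(z)\,\dot w\,\gamma_j(z)\,U$ with $u(z)\in U_w$. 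This gives $+\gamma_j$ in the paper's convention $(u,t)\mapsto uwtU$. It also directly gives $\operatorname{ord}_{D_j}\chi(t)=\langle\chi,\gamma_j\rangle$, which is the form actually used in Lemma~\ref{lem: lattice classes}.

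\textbf{Two cosmetic slips.}
\begin{itemize}
\item The $j$-th $\Pbb^1$ is the fibre of $BS^{(j)}\to BS^{(j-1)}$, not of $BS_w\to BS^{(j-1)}$. Transversality is clearest in the chart whose $j$-th factor is $x_{-\alpha_{k_j}}(u_j)$ and whose other factors are $x_{\alpha_{k_i}}(u_i)\dot s_{k_i}$. There $D_j=\{u_j=0\}$ and your curve is a coordinate line.
\item For $\ell=1$, the divisor $D_1=B/U$ is the $T$-fibre over $B/B$, not a $\Pbb^1$-fibre.
\end{itemize}
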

\begin{proof}
Let $\beta_j\in X_*(T)\cong \pi_1(\wt{\Bcal}_w)$ be the class of a positive meridian about $D_j$. We first show that {$\beta_j=\gamma_j$} in the base case when $w=s_{j_k}$ that  $\beta_j=\alpha_{k_j}^\vee$.

Let $L$ be the Levi subgroup of $P_{s_{j_k}}$. Then $P_{s_{j_k}}/U\cong L/L\cap U$. Therefore it suffices to study the case when $G$ is of semisimple rank $1$. {Such a group is a direct product of a torus with one of $\SL_2$, $\mathrm{PGL}_2$ or $\GL_2$. So it is enough to check that claim for the groups $\SL_2$, $\mathrm{PGL}_2$ or $\GL_2$}. We know that $G/U$ is
 \begin{itemize}
     \item the $\C^\times$-bundle on $\Pbb^1$ associated to $\Ocal(-1)$ for $\SL_2$.
     \item the $\C^\times$-bundle on $\Pbb^1$ associated to $\Ocal(-2)$ for $\mathrm{PGL}_2$.
     \item the $(\C^\times)^2$-bundle on $\Pbb^1$ associated to $\Ocal(-1)\oplus \Ocal(1)$ for $\GL_2$.
 \end{itemize}
Let $s=(12)$ be the simple reflection. According to Lemma \ref{lem: P1 meridian} below, a positive meridian around the complement of $\wt{\Bcal}_s$ corresponds to $1$, $2$ or $(1,-1)$ in $X_*(T)$ respectively, which is exactly the coroot $\alpha_s^\vee$ in each case. Therefore the base case is proved.

In the iterated Bott--Samelson construction, the factors to its right transport this cocharacter by $s_{k_\ell}\cdots s_{k_{j+1}}$. Therefore
	\[
    \beta_j
    =s_{k_\ell}\cdots s_{k_{j+1}}(\alpha_{k_j}^\vee)
    =\gamma_j.
\]
\end{proof}
Take $k\in \Z$. Let $X$ be the total space of $\Ocal_{\Pbb^1}(k)$ minus the zero section and the fiber at $\infty$. Then the isomorphism $X\cong \C\times \C^\times$ induces the identification $\pi_1(X)\cong \pi_1(\C^\times)\cong \Z$.
\begin{lem}\label{lem: P1 meridian}
A positive meridian around the fiber at $\infty$ in $X$ corresponds to $-k\in \pi_1(X)$.
\end{lem}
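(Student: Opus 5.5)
The proof is a computation in explicit local charts. We trivialize $\Ocal_{\Pbb^1}(k)$ over the two standard affine charts, write down a meridian loop in the chart at $\infty$, and read off its winding number in the fiber coordinate of the chart at $0$. The only thing that needs care is the convention for the transition function, and I will fix it by comparing with global sections.

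First, fix the charts. Let $U_0=\Pbb^1\setminus\{\infty\}$ with coordinate $z$, and $U_\infty=\Pbb^1\setminus\{0\}$ with coordinate $w=1/z$. Let $v$ be the fiber coordinate of the total space over $U_0$ and $u$ the fiber coordinate over $U_\infty$. To pin down the transition function, use the fact that global sections of $\Ocal(k)$ are pairs $(f_0(z),f_\infty(w))$ with $f_0$ a polynomial of degree $\le k$ and $f_\infty(w)=w^kf_0(1/w)$. Hence $f_0=z^kf_\infty$, and on the total space the fiber coordinates are related by
\[
v=z^k u \quad\text{on } U_0\cap U_\infty .
\]
In these coordinates $X=\{(z,v)\mid z\in\C,\ v\in\C^\times\}\cong\C\times\C^\times$. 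The identification $\pi_1(X)\cong\Z$ sends a loop to the winding number of its $v$-coordinate around $0$.

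Next, write the meridian. Take a smooth point of the fiber at $\infty$, namely $w=0$, $u=u_0\neq0$. The coordinate $w$ restricts to a coordinate on a complex disk transverse to this fiber, and on that disk we keep $u=u_0$ fixed. A positive meridian is then $\theta\mapsto(w,u)=(\varepsilon e^{i\theta},u_0)$ for $\theta\in[0,2\pi]$, with small $\varepsilon>0$. This loop lies in $X$, since $w\neq0$ and $u\neq0$.

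Finally, convert to the $U_0$ chart. There the loop is $z=\varepsilon^{-1}e^{-i\theta}$ and $v=z^ku_0=\varepsilon^{-k}e^{-ik\theta}u_0$. As $\theta$ runs over $[0,2\pi]$, $v$ winds $-k$ times around $0$. So the meridian represents $-k\in\pi_1(X)$.

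The only real obstacle is the sign and direction convention in the transition function, which determines whether $v=z^ku$ or $v=z^{-k}u$. The check against global sections above is how I would settle it, since it is what matches the normalization used in Lemma \ref{lem: BS monodromy}: there $G/U$ is the $\C^\times$-bundle associated to $\Ocal(-1)$ for $\SL_2$, so $k=-1$ and the meridian is $1=\alpha_s^\vee$.
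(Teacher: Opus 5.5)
Your proposal is correct and is essentially the paper's proof: the same two charts with transition $u=w^k v$ (the paper's $t_\infty=w^k t_0$), the same meridian $w=\varepsilon e^{i\theta}$ with constant fiber coordinate, and the same winding computation giving $-k$. One small caveat on your extra justification of the transition function via global sections: it only has content for $k\ge 1$, whereas the cases used in the paper have $k<0$ (e.g.\ $k=-1$ for $\SL_2$). The convention $v=z^k u$ nevertheless extends to all $k$ by passing to duals and tensor powers, so nothing is lost.
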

\begin{proof}
Let $(z,t_0)$ and $(w,t_\infty)$ be the coordinates of the affine chart of Tot$(\Ocal_{\Pbb^1}(k))$ at $0$ and $\infty$ respectively. Then $w=1/z$ and $t_\infty=w^k t_0$. Take a positive meridian $w(\theta)=\epsilon e^{i\theta}$ around the fiber at $\infty$ for $0<\epsilon \ll 1$ and $t_\infty\equiv s$ for some constant $s$. Then in the chart around $0$, $t_0=\frac{s}{\epsilon^k} e^{-ki\theta}$, which represents the class $-k\in \pi_1(X)$.
\end{proof}
\subsection{Deligne lattices}
Let $\Lbf_{w,\theta}$, resp. $\Lbf^!_{w,\theta}$, denote the $\dagger$-, resp. $!$-extension of $\Lbb_{w,\theta}$ from $\wt{\Bcal}_w$ to $\wt{BS}_w$, and let $\Lbf^{\geq-1}_{w,\theta}\subset\Lbf_{w,\theta}$, resp. $\Lbf^{!>-1}_{w,\theta}\subset\Lbf^!_{w,\theta}$, be Deligne's canonical lattices \cite[(3.10)]{saito}, i.e. the lattices whose residue eigenvalues along each component of $D$ lie in $[-1,0)$, resp. $(-1,0]$.

Fix a rational logarithm $\lambda\in X^*(T)\otimes\Q$ of $\theta$ and set
\begin{equation}
    \label{eq:bunch_of_quantities} c_j:=\langle\lambda,\gamma_j\rangle, \qquad n_j:=\lfloor c_j\rfloor+1, \qquad m_j:=\lceil c_j\rceil, \qquad f_j:=c_j-n_j, \qquad e_j:=c_j-m_j.
\end{equation}
Thus $f_j\in[-1,0)$ and $e_j\in(-1,0]$. On $BS_w$, set
\[
    P_\lambda:=\sum_{j=1}^\ell\lfloor c_j\rfloor E_j, \qquad E_\theta:=\sum_{j:\,\theta(\gamma_j)=1}E_j,
\]
{(where $E_j$'s are defined by (\ref{eq:E_D_definition}))} and write $F_\lambda:=r_w^*P_\lambda$ and $D_\theta:=r_w^*E_\theta$.
\begin{lem}\label{lem: lattice classes}
We have isomorphisms of line bundles
	\begin{align}
    \label{eq: lattice classes} \Lbf^{\geq-1}_{w,\theta}&\cong\Ocal(D+F_\lambda), &\Lbf^{!>-1}_{w,\theta}&\cong\Ocal(D+F_\lambda-D_\theta) \cong\Lbf^{\geq-1}_{w,\theta}(-D_\theta).
\end{align}
\end{lem}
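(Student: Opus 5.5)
Our plan is to reduce the statement to a computation with the rank-one flat connection on the complement $\wt{\Bcal}_w=\wt{BS}_w\setminus D$, and then to read off the Deligne lattices from the residues along the components $D_j$. Since $\Lbb_{w,\theta}$ is a rank-one regular integrable connection with monodromy $\theta$ on $\pi_1(\wt{\Bcal}_w)\cong X_*(T)$, we first write it as a $D$-module. Choose the rational logarithm $\lambda$ of $\theta$. Then $\Lbb_{w,\theta}$ is the trivial line bundle $\Ocal$ on $\wt{\Bcal}_w$ with a connection $d+\omega_\lambda$, where $\omega_\lambda$ is a closed logarithmic $1$-form pulled back from $T$ via $\wt{\Bcal}_w\cong U_wwT\to T$; in effect, $\omega_\lambda$ is the logarithmic differential of the multivalued function $t^\lambda$. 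This connection extends to $\wt{BS}_w$ as a logarithmic connection on $\Ocal_{\wt{BS}_w}$ with poles along $D$, and its residue along $D_j$ is determined by the monodromy around a positive meridian. By Lemma~\ref{lem: BS monodromy}, that monodromy is $\theta(\gamma_j)=e^{2\pi i\langle\lambda,\gamma_j\rangle}$. With the sign convention making the residue $-c_j$ (or $c_j$; we fix it by checking against Lemma~\ref{lem: P1 meridian} in the rank-one case), the residue of $\Ocal$ along $D_j$ is $\pm c_j$, where $c_j=\langle\lambda,\gamma_j\rangle$.

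The residue of a rank-one logarithmic lattice shifts by an integer under twisting: the lattice $\Ocal(\sum a_jD_j)$, generated by $\prod z_j^{-a_j}$ in local coordinates, has residue along $D_j$ equal to the original residue minus $a_j$. We then solve for $a_j$ so that the residue lies in $[-1,0)$ (respectively $(-1,0]$). Starting from residue $c_j$, the $[-1,0)$ normalization forces a shift by $n_j=\lfloor c_j\rfloor+1$, which gives $f_j$, and this yields the lattice $\Ocal(\sum n_jD_j)=\Ocal(D+F_\lambda)$. The $(-1,0]$ normalization forces a shift by $m_j=\lceil c_j\rceil$. We have $m_j=n_j$ when $c_j\notin\Z$ and $m_j=n_j-1$ when $c_j\in\Z$, i.e. exactly when $\theta(\gamma_j)=1$. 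This gives $\Ocal(D+F_\lambda-D_\theta)$. Because $D$ is SNC and the lattice conditions are local along each component, treating each $D_j$ separately suffices.

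Two further identifications are needed. The first is that Saito's description of $\Lbf_{w,\theta}$ (the $\dagger$-extension) is generated by the lattice $\Lbf^{\geq-1}$, and that of $\Lbf^!_{w,\theta}$ by $\Lbf^{!>-1}$. We take these as the definitions being used, citing \cite[(3.10)]{saito}, so nothing beyond the lattice computation is required. The second is that $F_\lambda$ and $D_\theta$ are pulled back from $BS_w$ via $r_w$, which is immediate since $D_j=r_w^*E_j$.

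The main obstacle is the sign and normalization bookkeeping. This has two parts: whether the residue of the trivialized connection along $D_j$ is $c_j$ or $-c_j$, given the identification of $\pi_1$ with $X_*(T)$ and the orientation fixed in Lemma~\ref{lem: P1 meridian}; and whether $\Ocal(D_j)$ raises or lowers the residue. We would settle both by an explicit computation in the semisimple rank-one case, $\wt{\Bcal}=\C^2\setminus0$ for $\SL_2$ in local coordinates $(w,t_\infty)$, checking that $\Lbf^{\geq-1}$ for trivial $\theta$ is $\Ocal(D)$, as it must be for the $\dagger$-extension of $\Ocal$. The general case then follows from the same local model transported by Lemma~\ref{lem: BS monodromy}.
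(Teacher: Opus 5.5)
Your route is the paper's. You trivialize $\Mcal_{w,\theta}$ by the canonical generator $\psi$ (the section $1\boxtimes1$ of $\Ocal_{U_w}\boxtimes\mathcal K_\lambda$, i.e.\ your section $1$ with connection form $\lambda\,d\log t$), compute its residue along each $D_j$, and rescale by $\prod_j q_{V,j}^{-n_j}$ (resp.\ $\prod_j q_{V,j}^{-m_j}$) on SNC charts so the residues lie in $[-1,0)$ (resp.\ $(-1,0]$). You then glue to $\Ocal(\sum_j n_jD_j)=\Ocal(D+F_\lambda)$ and compare $m_j$ with $n_j$. Your bookkeeping is correct: multiplying by $q_{V,j}^{-a}$ lowers the residue by $a$, and $m_j=n_j$ unless $c_j\in\Z$ (equivalently $\theta(\gamma_j)=1$), in which case $m_j=n_j-1$.

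The gap is in the one non-formal step: that the residue of $\psi$ along $D_j$ is exactly $c_j$.

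First, you obtain it from the scalar monodromy $\theta(\gamma_j)=e^{2\pi i c_j}$. That determines the residue only modulo $\Z$, but the lemma is precisely about the integer parts $\lfloor c_j\rfloor$ that make up $F_\lambda$. What is needed is the order of vanishing. Read Lemma~\ref{lem: BS monodromy} as a statement about the class $\gamma_j\in X_*(T)$: the function $\chi\circ t$ winds $\langle\chi,\gamma_j\rangle$ times around a positive meridian. Hence $\operatorname{ord}_{D_j}(\chi\circ t)=\langle\chi,\gamma_j\rangle$ for all $\chi\in X^*(T)$, i.e.\ locally $t=a_V\bigl(\prod_{j\in J_V}\gamma_j(q_{V,j})\bigr)\tau_V$ with $a_V:V\to T$ regular.

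Second, your proposed sign test (trivial $\theta$, expected answer $\Ocal(D)$) has $\lambda=0$, hence $c_j=0$. So it cannot distinguish residue $c_j$ from $-c_j$; it only confirms the $[-1,0)$ normalization. The sign matters: residue $-c_j$ would give exponents $\lfloor -c_j\rfloor+1$, e.g.\ $0$ instead of $1$ for $c_j=\tfrac12$. That is a different subsheaf of $\Mcal_{w,\theta}$, and it is this subsheaf, through the generators $\phi_V$, that is used in Theorem~\ref{thm: gr general theta}.

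No test case is needed. Once $\Lbb_{w,\theta}\cong\Ocal_{U_w}\boxtimes\mathcal K_\lambda$ is fixed, apply $q_{V,j}\partial_{q_{V,j}}$ to the factorization of $t$ and use $\partial_\xi\psi=\langle\lambda,\xi\rangle\psi$. This gives $q_{V,j}\partial_{q_{V,j}}\psi=\bigl(c_j+\langle\lambda,\eta_{V,j}\rangle\bigr)\psi$, where $\eta_{V,j}=(d\log a_V)(q_{V,j}\partial_{q_{V,j}})\in q_{V,j}\Ocal(V)\otimes\tfrak$. So the residue is exactly $+c_j$. With this computation replacing the monodromy argument and the rank-one check, your proof is the paper's.
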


\begin{proof}
Let $\mathcal M_{w,\theta} := \Lbf_{w,\theta}(*D) \cong \Lbf^!_{w,\theta}(*D)$ be the common meromorphic extension of $\Lbb_{w,\theta}$ (defined by (\ref{defn: loc sys L w theta})) to $\wt{BS}_w$. Here $\Lbf_{w,\theta}(*D)$, resp. $\Lbf^!_{w,\theta}(*D)$, denotes the sheaf of rational sections that are regular away from $D$ but with possible poles along $D$. Consider the product presentation $U_w\times T \xrightarrow{\sim} \widetilde{\mathcal B}_w, (u,t)\mapsto uwtU$ and let $t:\widetilde{\mathcal B}_w\longrightarrow T$ denote the projection.

Write
\[
    \Lbb_{w,\theta} \cong \mathcal O_{U_w}\boxtimes\mathcal K_\lambda, \text{  {where} } \mathcal K_\lambda := \mathcal D_T\Big/ \sum_{\xi\in\mathfrak t} \mathcal D_T \bigl(\partial_\xi-\langle\lambda,\xi\rangle\bigr).
\]
Let $\psi_0$ be the section {  of $\Lbb_{w,\theta}$} corresponding to $1\boxtimes 1$. It determines a unique rational section $\psi$ of $\mathcal M_{w,\theta}$ such that $\mathcal M_{w,\theta}=\mathcal O_{\widetilde{BS}_w}(*D)\psi.$

Let $V\subset\widetilde{BS}_w$ be a SNC chart with $V\cong\mathbb A^{\ell(w)}\times T$. Let $J_V$ index the boundary components meeting $V$. Let $q_{V,j}$ be the defining equations of $D_j\cap V$ pulled back from $BS_w\cap r_w(V)$, invariant under the $T$-action.

By Lemma \ref{lem: BS monodromy}, we have
	\[
    \operatorname{ord}_{D_j}\chi(t)
    = \langle\chi,\gamma_j\rangle \qquad \text{for }\chi\in X^*(T).
\]
Let $\tau_V: V\to T$ be the projection. Then there is a regular map $a_V:V\to T$ such that, on $V\cap\widetilde{\mathcal B}_w$,
	\begin{align}
    t
    = a_V \left( \prod_{j\in J_V}\gamma_j(q_{V,j}) \right)\tau_V. \label{eq: local-T-coordinate-with-unit}
\end{align}

For $j\in J_V$, put
	\[
    \eta_{V,j} := (d\log a_V) \left(q_{V,j}\partial_{q_{V,j}}\right) \in \mathcal O(V)\otimes\mathfrak t.
\]
Applying $q_{V,j}\partial_{q_{V,j}}$ to \eqref{eq: local-T-coordinate-with-unit} and using the defining equations of $\psi$, we obtain
	\begin{align}
    q_{V,j}\partial_{q_{V,j}}\psi
    = \left( c_j+\langle\lambda,\eta_{V,j}\rangle \right)\psi. \label{eq: local-residue-with-regular-term}
\end{align}
Since $\eta_{V,j}\in q_{V,j}\mathcal O(V)\otimes\mathfrak t$, {  the function} $\langle\lambda,\eta_{V,j}\rangle$ vanishes on $D_j$, we have $\operatorname{Res}_{D_j}(\psi)=c_j.$

Now set
	\[
    \phi_V := \left( \prod_{j\in J_V}q_{V,j}^{-n_j} \right)\psi, \qquad \phi_V^! := \left( \prod_{j\in J_V}q_{V,j}^{-m_j} \right)\psi.
\]
Their residues along $D_j$ are, respectively,
	\[
    c_j-n_j=f_j\in[-1,0), \qquad c_j-m_j=e_j\in(-1,0].
\]
It follows that
	\begin{align}
    \label{eq: lattice generation} \Lbf^{\geq-1}_{w,\theta}(V)=\Ocal(V) \phi_V,\qquad \Lbf^{!>-1}_{w,\theta}(V)=\Ocal(V) \phi_V^!
\end{align}
which are understood via injective maps into $\Mcal_{w,\theta}$. At the intersection of two charts $V$ and $V'$, $\phi_V$ and $\phi_{V'}$ differ by the unit
\[
    \alpha_{V,V'}:=\prod_{j\in J_V}q_{V,j}^{n_j}\prod_{j\in J_{V'}}q_{V',j}^{-n_j}
\]
Since $(\alpha_{V,V'})$ are the transition functions of the line bundle $\mathcal O(\sum_jn_jD_j)$, we obtain that
	\[
    \Lbf^{\geq-1}_{w,\theta} \cong \mathcal O_{\widetilde{BS}_w} \left(\sum_jn_jD_j\right).
\]
Similarly, $\Lbf^{!>-1}_{w,\theta} \cong \mathcal O_{\widetilde{BS}_w} \left(\sum_jm_jD_j\right)$.

Finally,
\[
    n_j=\lfloor c_j\rfloor+1, \qquad m_j= \begin{cases} \lfloor c_j\rfloor+1,&c_j\notin\mathbb Z,\\ \lfloor c_j\rfloor,&c_j\in\mathbb Z, \end{cases}
\]
and $c_j\in\mathbb Z$ if and only if $\theta(\gamma_j)=1.$ Therefore
	\[
    \sum_jn_jD_j
    = D+F_\lambda, \qquad \sum_jm_jD_j
    = D+F_\lambda-D_\theta.
\]
Consequently,
	\[
    \Lbf^{\geq-1}_{w,\theta} \cong \mathcal O(D+F_\lambda), \qquad \Lbf^{!>-1}_{w,\theta} \cong \mathcal O(D+F_\lambda-D_\theta) \cong \Lbf^{\geq-1}_{w,\theta}(-D_\theta),
\]
as desired.
\end{proof}
\subsection{gr of (co)standards}\label{SS_gr_costand}
Let $\widetilde{\Sigma}_w\subset T^*\widetilde{BS}_w$ be the closed subscheme of covectors that annihilate all logarithmic vector fields in $T_{\widetilde{BS}_w}(-\log D)$. With respect to the trivialization $V\simeq V_0\times T$, choose boundary coordinates $q_{V,j}$, remaining coordinates $x_{V,a}$ in the $V_0$-direction, and torus coordinates $z_{V,b}$. If $\xi_{V,j}$, $\eta_{V,a}$, and $\zeta_{V,b}$ denote the principal symbols of $\partial_{q_{V,j}}$, $\partial_{x_{V,a}}$, and $z_{V,b}\partial_{z_{V,b}}$, then
\[
    \widetilde{\Sigma}_w\cap T^*V
    = Z\left( (q_{V,j}\xi_{V,j})_{j\in J_V}, (\eta_{V,a})_a, (\zeta_{V,b})_{b=1}^r \right)
\]
where $Z(-)$ on the right hand side indicates vanishing locus of the listed functions. Its underlying set is the union of the conormal bundles to the boundary strata. Let $\Sigma_w\subset T^*BS_w$ be its quotient by $T$, and write $\wt\iota_w$ and $\iota_w$ for the respective closed embeddings.

For the map $p:BS_w\to\Bcal$, put
\begin{equation}
    \label{eq:Awe_defn} A_w^e:=BS_w\times_{p,\Bcal}T^*\Bcal, \qquad T^*BS_w\xleftarrow{\,\rho_p\,}A_w^e \xrightarrow{\,\varpi_p\,}T^*\Bcal,
\end{equation}
where $\rho_p$ is the codifferential and $\varpi_p$ is the projection. Define
\begin{align*}
    (\mathsf K^\nabla_{w,\lambda})^e &:=(\varpi_p)_*\!\left( \rho_p^*\bigl[(\iota_w)_*\Ocal_{\Sigma_w}\bigr] \otimes\Ocal(P_\lambda)\right),\\ (\mathsf K^\Delta_{w,\lambda})^e &:=(\varpi_p)_*\!\left( \rho_p^*\bigl[(\iota_w)_*\Ocal_{\Sigma_w}\bigr] \otimes\Ocal(P_\lambda-E_\theta)\right).
\end{align*}
Here and blow divisor line bundles are pulled back to $A_w^e$. Let $\mathsf K^\nabla_{w,\lambda}:=\ind_B^G (\mathsf K^\nabla_{w,\lambda})^e$ and $\mathsf K^\Delta_{w,\lambda}:=\ind_B^G (\mathsf K^\Delta_{w,\lambda})^e$.

{ Recall the objects $\nabla_{w,\theta},\Delta_{w,\theta}\in \mathcal{H}^\heartsuit_\theta$ defined by \eqref{defn:nabla} and \eqref{defn:Delta}. }

\begin{thm}\label{thm: gr general theta}
Suppose $w\in W_\theta$. Then we have the following isomorphisms of objects in  $\Coh_{G\times\C^\times}(\Zcal^R)$
	\begin{align}
    \gr_{\Hcal,\lambda}(\nabla_{w,\theta}) &\cong \mathsf K^\nabla_{w,\lambda} \otimes\Ocal(w\lambda-\lambda+\rho,\rho)\langle b-\ell(w)\rangle, \label{eq: gr nabla general}\\ \gr_{\Hcal,\lambda}(\Delta_{w,\theta}) &\cong \mathsf K^\Delta_{w,\lambda} \otimes\Ocal(w\lambda-\lambda+\rho,\rho)\langle b-\ell(w)\rangle. \label{eq: gr Delta general}
\end{align}
\end{thm}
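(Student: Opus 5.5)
Since $\nabla_{w,\theta}=\Ind_B^G j_\dagger\Lbb_{w,\theta}$ and $\Delta_{w,\theta}=\Ind_B^G j_!\Lbb_{w,\theta}$, Lemma \ref{prop_gr_induction} reduces both isomorphisms to the corresponding statements for the $B$-equivariant (monodromic) objects $\nabla^e_{w,\theta}$, $\Delta^e_{w,\theta}$ on $\wt{\Bcal}$. We then use $\mathsf K^\nabla_{w,\lambda}=\ind_B^G(\mathsf K^\nabla_{w,\lambda})^e$ and compatibility of $\ind_B^G$ with tensoring by the $G$-equivariant line bundle $\Ocal(w\lambda-\lambda+\rho,\rho)$. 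Next, $j:\wt{\Bcal}_w\to\wt{\Bcal}$ factors as the open embedding $\wt{\Bcal}_w\hookrightarrow \wt{BS}_w$ followed by the projective map $\wt p$. Since $\wt p$ is an isomorphism over $\wt{\Bcal}_w$, we have $\nabla^e_{w,\theta}=\wt p_\dagger\Lbf_{w,\theta}$ and $\Delta^e_{w,\theta}=\wt p_!\Lbf^!_{w,\theta}=\wt p_\dagger\Lbf^!_{w,\theta}$. Proposition \ref{prop:gr_pushforward}, applied $B\times\wt T$-equivariantly to $\wt p$, then gives
\[
\gr(\nabla^e_{w,\theta})\cong \gamma_{\wt p}\bigl(\gr \Lbf_{w,\theta}\bigr),\qquad \gr(\Delta^e_{w,\theta})\cong \gamma_{\wt p}\bigl(\gr \Lbf^!_{w,\theta}\bigr),
\]
with $\gamma_{\wt p}$ contributing the twist by $\wt\omega_{\wt{BS}_w/\wt{\Bcal}}$ and the grading shift $\langle \dim\wt\Bcal-\dim\wt{BS}_w\rangle$. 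Finally, $D_\lambda$ descends along the $T$-torsors $r_w$ and $\wt\Bcal\to\Bcal$, replacing $\wt p$ by $p$ and producing the correspondence $A^e_w$ of \eqref{eq:Awe_defn}.

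The core step is the computation of $\gr^F\Lbf_{w,\theta}$ and $\gr^F\Lbf^!_{w,\theta}$ on $\wt{BS}_w$. Since $\Lbb_{w,\theta}$ is a rank-one variation of Hodge structure of type $(0,0)$ and $D$ is SNC, Saito's description \cite[(3.10)]{saito} gives
\[
F_k\Lbf_{w,\theta}=\sum_{i\geq 0}F_{i}\Dcal_{\wt{BS}_w}\cdot F_{k-i}\Lbf^{\geq-1}_{w,\theta},
\]
with $F_p\Lbf^{\geq -1}_{w,\theta}$ being the lattice itself in the lowest degree. The analogous formula holds for $\Lbf^!$ with the lattice $\Lbf^{!>-1}_{w,\theta}$. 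The lowest piece is the generator $\phi_V$, resp. $\phi_V^!$, of \eqref{eq: lattice generation}. The annihilator of the symbol of the generator is generated by the symbols of $q_{V,j}\partial_{q_{V,j}}-f_j$, of $\partial_{x_{V,a}}$, and of $z_{V,b}\partial_{z_{V,b}}-(\text{const})$. Their principal symbols are exactly the functions cutting out $\widetilde\Sigma_w$, and they form a regular sequence. Hence
\[
\gr^F\Lbf_{w,\theta}\cong (\wt\iota_w)_*\Ocal_{\wt\Sigma_w}\otimes\Lbf^{\geq-1}_{w,\theta},
\]
and similarly for the $!$-extension. By Lemma \ref{lem: lattice classes}, these line bundles are $\Ocal(D+F_\lambda)$ and $\Ocal(D+F_\lambda-D_\theta)$, which gives the difference between $P_\lambda$ and $P_\lambda-E_\theta$ in $(\mathsf K^\nabla)^e$ and $(\mathsf K^\Delta)^e$. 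After descending by $T$ via $D_\lambda$ (the tensoring by $\C_{-\wt\lambda}$ and the $T$-linearization of the lattice), we obtain $\Ocal_{\Sigma_w}\otimes\Ocal(D+P_\lambda)$ on $T^*BS_w$, up to a character twist.

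The remaining work is to match line bundles, and I expect this to be the main obstacle. Three ingredients have to be put together. First, $\wt\omega_{BS_w/\Bcal}$: from the iterated $\Pbb^1$-bundle structure, $\omega_{BS_w}\cong\Ocal(-E)\otimes p^*(\text{something in }\rho)$, and $\omega_\Bcal=\Ocal(-2\rho)$. Second, the $\Ocal(D)$ factor, which should cancel against the $\Ocal(-E)$ in $\omega_{BS_w}$. Third, the $T$-linearization of $\psi$, which carries the weight of $t$ and, read at the two ends of the Bott--Samelson chain, becomes $w\lambda-\lambda$ on the first factor of $\Zcal$. We also have to pass from the $B$-equivariant sheaf on $T^*\Bcal$ to $\Zcal^e\subset \Bcal\times\nfrak$ and then to $\Zcal$ via $f:G\times^B\Zcal^e\cong\Zcal$. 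The symmetric factor $\Ocal(\rho,\rho)$ should come from identifying $\omega_\Bcal^{1/2}$ on each side, and the shift $\langle b-\ell(w)\rangle$ from $\dim\Bcal-\dim BS_w=0$ corrected by the grading conventions of $\gr$ versus $\Ocal_{\Sigma_w}$. I would check all of these twists first in the case $w=s$ simple for $\SL_2$, where Lemma \ref{lem: P1 meridian} makes everything explicit, and then argue by induction on the reduced word, using the multiplicativity of both sides under $\wt{BS}_w=\wt{BS}_{w'}\times^B P_s/U$.
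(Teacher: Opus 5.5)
Your reduction and core computation are the paper's. Lemma \ref{prop_gr_induction} reduces to $\nabla^e_{w,\theta},\Delta^e_{w,\theta}$. On $\wt{BS}_w$, Saito's formula with the lattices of Lemma \ref{lem: lattice classes} gives $\gr\Lbf_{w,\theta}\cong\Ocal_{\wt\Sigma_w}\otimes\Lbf^{\geq-1}_{w,\theta}$, and likewise for $\Lbf^!_{w,\theta}$, because the symbols of the local relations form the regular sequence cutting out $\wt\Sigma_w$. (Because of the unit $a_V$, the relations are really $\partial_{x_{V,a}}-A_{V,a}$ and $q_{V,j}\partial_{q_{V,j}}-f_j-A_{V,j}$, but only symbols matter.) Proposition \ref{prop:gr_pushforward} then pushes forward along $\wt p$. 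What you leave undone is exactly the part specific to this statement: the twist $\Ocal(w\lambda-\lambda+\rho,\rho)$ and the shift. Your sketch of that part also contains an error.

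First, $\dim BS_w=\ell(w)$, not $b$. The shift $\langle b-\ell(w)\rangle$ is just the $\langle\dim\Bcal-\dim BS_w\rangle$ built into $\gamma_p$, as you correctly said earlier for $\gamma_{\wt p}$. Since $\Lbb_{w,\theta}$ has type $(0,0)$, the lattice sits in $F_0$, so there is no further ``correction by grading conventions''.

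Second, the twist should not be found by an $\SL_2$ check plus induction on the reduced word. That would need compatibility of $\gr_{\Hcal,\lambda}$ with convolution between categories of different monodromy, since prefixes of $w$ need not lie in $W_\theta$, and this is not established here. The paper instead uses two direct facts.

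\emph{(i)} The rational section $\psi$ is $\Ad(U)$-invariant. The identity $s^{-1}(uwt)s=(s^{-1}us)\,w\,\bigl((w^{-1}\cdot s^{-1})ts\bigr)$ shows its $\Ad(T)$-weight is $\lambda-w\lambda$. So $B$-equivariantly $\Lbf^{\geq-1}_{w,\theta}\cong\Ocal(D+F_\lambda)\otimes\C_{\lambda-w\lambda}$, and similarly for $\Lbf^{!>-1}_{w,\theta}$.

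\emph{(ii)} From $\omega_{BS_w}(E)\cong p_0^*\Ocal_\Bcal(-\rho)\otimes p_\ell^*\Ocal_\Bcal(-\rho)$ and $\omega_\Bcal\cong\Ocal_\Bcal(-2\rho)$ one gets $\omega_{BS_w/\Bcal}\otimes\Ocal(E)\cong\C_{-\rho}\otimes p^*\Ocal_\Bcal(\rho)$. The character $\C_{-\rho}$ comes from the constant endpoint $p_0$; it is what your placeholder ``something in $\rho$'' must contain, and it produces the $\rho$ in the first slot.

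With these, the $\Ocal(E)$ of the lattice is absorbed into the relative canonical bundle, and the projection formula moves $p^*\Ocal_\Bcal(\rho)$ outside $(\varpi_p)_*$. One obtains $(\mathsf K^\nabla_{w,\lambda})^e(\rho)\otimes\C_{\lambda-w\lambda-\rho}\langle b-\ell(w)\rangle$, resp.\ the same with $\mathsf K^\Delta_{w,\lambda}$. Inducing, with the convention $\Ocal(\alpha)=\ind_B^G\C_{-\alpha}$, gives $\Ocal(w\lambda-\lambda+\rho,\rho)$.
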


\begin{proof}
By Lemma \ref{prop_gr_induction}, it suffices to compute $\gr \nabla_{w,\theta}^e$ and $\gr \Delta_{w,\theta}^e$ as $\Ad(B)$-equivariant coherent sheaves. We first compute $\gr (\Lbf_{w,\theta})$ and $\gr(\Lbf_{w,\theta}^!)$ on $\wt{BS}_w$ and then apply Proposition  \ref{prop:gr_pushforward}.

{\it Quotient description.} We first compute $\gr \nabla_{w,\theta}^e$ and $\gr \Delta_{w,\theta}^e$ without the $\Ad(B)$-equivariance. On the SNC chart $V\simeq V_0\times T$ with basis $q_{V,j}, x_{V,a}, z_{V,b}$ as above, write $z_{V,b}=\chi_b\circ\tau_V$ for a fixed basis $\chi_1,\dots,\chi_r$ of $X^*(T)$. Let $\xi_1,\dots,\xi_r$ be the dual basis of $X_*(T)$, and set $\lambda_b:=\langle\lambda,\xi_b\rangle.$

Using \eqref{eq: local-T-coordinate-with-unit}, set
	\begin{align*}
    A_{V,j}:=\left\langle \lambda, (d\log a_V)(q_{V,j}\partial_{q_{V,j}}) \right\rangle, \qquad A_{V,a}:=\left\langle \lambda, (d\log a_V)(\partial_{x_{V,a}}) \right\rangle.
\end{align*}
The right $T$-equivariance of $t$ and $\tau_V$ implies that $a_V$ is constant along the $T$-direction. Hence $a_V$ contributes no term in the $z_{V,b}$-directions.

With respect to the local generators $\phi_V$ and $\phi_V^!$ of \eqref{eq: lattice generation}, we have
	\begin{align}
    \Lbf_{w,\theta}|_V &\cong \Dcal_V\Big/ \left( \sum_{j\in J_V} \Dcal_V(q_{V,j}\partial_{q_{V,j}}-f_j-A_{V,j}) + \sum_a \Dcal_V(\partial_{x_{V,a}}-A_{V,a}) + \sum_{b=1}^r \Dcal_V(z_{V,b}\partial_{z_{V,b}}-\lambda_b) \right), \label{iso: * as quotient}\\ \Lbf^!_{w,\theta}|_V &\cong \Dcal_V\Big/ \left( \sum_{j\in J_V} \Dcal_V(q_{V,j}\partial_{q_{V,j}}-e_j-A_{V,j}) + \sum_a \Dcal_V(\partial_{x_{V,a}}-A_{V,a}) + \sum_{b=1}^r \Dcal_V(z_{V,b}\partial_{z_{V,b}}-\lambda_b) \right), \label{iso: ! as quotient}
\end{align}
{ where $f_j,e_j$ are defined in (\ref{eq:bunch_of_quantities}).}

Take $\xi_{V,j}$, $\eta_{V,a}$, and $\zeta_{V,b}$ as above. 
The principal symbols of the relations in both presentations { \eqref{iso: * as quotient}, \eqref{iso: ! as quotient}}  are $(q_{V,j}\xi_{V,j})_{j\in J_V},\ (\eta_{V,a})_a,\ (\zeta_{V,b})_{b=1}^r.$ They form the regular sequence defining $\widetilde{\Sigma}_w\cap T^*V$.

By \cite[(3.10)]{saito}, the trivial filtration ($F_k\Lbf_{w,\theta}^{\geq -1}=\Lbf_{w,\theta}^{\geq -1}$ when $k\geq 0$ and $0$ when $k<0$) induces the Hodge filtration:
	\begin{align*}
    F_k \Lbf_{w,\theta}=\sum_{i+j=k} \left(F_i\Dcal_{\wt{BS}_w}\right)\left(F_j\Lbf_{w,\theta}^{\geq -1}\right).
\end{align*}
The analogous formula holds for $\Lbf^!_{w,\theta}$ with $\Lbf_{w,\theta}^{!> -1}$. Lemma \ref{lem: lattice classes} therefore gives after descent, nonequivariantly,
	\begin{align*}
    D_\lambda\gr(\Lbf_{w,\theta}) &\cong(\iota_w)_*\Ocal_{\Sigma_w}(E+P_\lambda),\\ D_\lambda\gr(\Lbf^!_{w,\theta}) &\cong(\iota_w)_*\Ocal_{\Sigma_w} (E+P_\lambda-E_\theta),
\end{align*}
{where the notation $D_\lambda$ is introduced in (\ref{eq:Desc_shorthand}).}

{\it $\Ad(B)$-equivariance.} Now we identify the $\Ad(B)$ equivariant structure of $D_\lambda\gr(\Lbf_{w,\theta})$ and $D_\lambda\gr(\Lbf^!_{w,\theta})$ by computing the $\Ad(T)$-weight of the rational section $\psi$ from the proof of Lemma \ref{lem: lattice classes}. Recall the isomorphism $U_w\times T\xrightarrow{\sim}\widetilde{\mathcal B}_w$, $(u,t)\mapsto uwtU$. Moreover, $\psi|_{\wt{\Bcal}_w}$ is pullback of the section $1\in \mathcal K_\lambda$ such that $\xi1=\langle \lambda, \xi\rangle 1$ for any $\xi\in \tfrak$. Therefore $\psi|_{\wt{\Bcal}_w}$ is invariant under the left, or equivalently adjoint $U$-action.

For $s\in T$ one has
	\[
    s^{-1}(uwt)s=(s^{-1}us)w\bigl((w^{-1}\cdot s^{-1})ts\bigr),
\]
Thus, with the action on sections defined by $(sf)(x)=f(s^{-1}\cdot x)$, we have
	\begin{align*}
    \xi\cdot\psi
    =\partial_{\xi-w^{-1}\xi}\psi
    =\langle\lambda,\xi-w^{-1}\xi\rangle\psi
    =\langle\lambda-w\lambda,\xi\rangle\psi.
\end{align*}

For every SNC chart $V$, recall also the local sections $\phi_{V}=\prod_{j\in J_V}q_{V,j}^{-n_j}\psi$ satisfying \eqref{eq: lattice generation}. Moreover, the coefficient of $E_j$ in $E+P_\lambda$ is $n_j$. Therefore, $B$-equivariantly we have $\Lbf_{w,\theta}^{\geq -1}\cong \Ocal(D+F_\lambda)\otimes \C_{\lambda-w\lambda}$. Consequently,
	\[
    D_\lambda\gr(\Lbf_{w,\theta}) \cong(\iota_w)_*\Ocal_{\Sigma_w}(E+P_\lambda)\otimes\C_{\lambda-w\lambda}.
\]
Similarly,
	\[
    D_\lambda\gr(\Lbf^!_{w,\theta}) \cong(\iota_w)_*\Ocal_{\Sigma_w} (E+P_\lambda-E_\theta)\otimes\C_{\lambda-w\lambda}.
\]

{\it Direct image.} Finally we determine $D_\lambda\gr(\nabla^e_{w,\theta})$ and $D_\lambda\gr(\Delta^e_{w,\theta})$ by applying Proposition \ref{prop:gr_pushforward}. Let $\widetilde j_w:\wt\Bcal_w\hookrightarrow\wt{BS}_w$ and $j_w:\wt\Bcal_w\hookrightarrow\wt\Bcal$ be the open-cell and Bruhat-cell embeddings. The restriction $u:=\wt p|_{\wt\Bcal_w}$ is an isomorphism onto the Bruhat cell and $\wt p\circ\widetilde j_w=j_w\circ u$. Since $\Lbf_{w,\theta}=(\widetilde j_w)_\dagger\Lbb_{w,\theta}$, functoriality gives
	\[
    \wt p_\dagger\Lbf_{w,\theta} \cong(j_w)_\dagger u_\dagger\Lbb_{w,\theta} \cong\nabla^e_{w,\theta}.
\]
Similarly, using the properness of $\wt p$,
	\[
    \wt p_\dagger\Lbf^!_{w,\theta} \cong \wt p_!(\widetilde j_w)_!\Lbb_{w,\theta} \cong(j_w)_!u_!\Lbb_{w,\theta} \cong\Delta^e_{w,\theta}.
\]
After descent, Proposition \ref{prop:gr_pushforward} gives
	\[
    D_\lambda\gr(\nabla^e_{w,\theta}) \cong \gamma_pD_\lambda\gr(\Lbf_{w,\theta}),\qquad D_\lambda\gr(\Delta^e_{w,\theta}) \cong \gamma_pD_\lambda\gr(\Lbf^!_{w,\theta}),
\]
where $\gamma_p(-)=(\varpi_p)_* (\rho_p^*(-)\otimes\omega_{BS_w/\Bcal}\langle b-\ell(w)\rangle)$. Set $\mathcal T_p:= \omega_{BS_w/\mathcal B}\otimes\mathcal O(E).$ Then equivariantly,
	\[
    \mathcal T_p\cong \mathbb C_{-\rho}\otimes p^*\mathcal O_{\mathcal B}(\rho).
\]

Now the projection formula and the definitions of $\mathsf K^\nabla_{w,\lambda}$ and $\mathsf K^\Delta_{w,\lambda}$ give
	\begin{align*}
    D_\lambda\gr(\nabla^e_{w,\theta}) &\cong\bigl(\mathsf K^\nabla_{w,\lambda}\bigr)^e(\rho) \otimes\C_{\lambda-w\lambda-\rho}\langle b-\ell(w)\rangle,\\ D_\lambda\gr(\Delta^e_{w,\theta}) &\cong\bigl(\mathsf K^\Delta_{w,\lambda}\bigr)^e(\rho) \otimes\C_{\lambda-w\lambda-\rho}\langle b-\ell(w)\rangle.
\end{align*}
Finally we apply Lemma \ref{prop_gr_induction} to conclude the theorem.
\end{proof}
\subsection{Specializations}
When $\lambda$ satisfies certain conditions, the statements of Theorem \ref{thm: gr general theta} simplify, as we will see in Corollary \ref{coro: gr nabla_theta}.

For a subset $I\subset\{1,\ldots,\ell\}$, put $E_I:=\sum_{j\in I}E_j$ and define
\begin{align*}
    (\mathsf K_{w,I})^e:= (\varpi_p)_*\left( \rho_p^*\bigl((\iota_w)_*\Ocal_{\Sigma_w}\bigr) \otimes\Ocal(-E_I) \right).
\end{align*}
where $\Sigma_w$ is defined in the beginning of Section \ref{SS_gr_costand}. Write $\mathsf K_{w,I}=\ind_B^G (\mathsf K_{w,I})^e$. Recall (from e.g \cite[1.2]{BR}) that the category $D^b\Coh_{G\times\mathbb{G}_m,\Zcal} (\wt\Ncal\times\wt\Ncal)$ comes equipped with the convolution:
\begin{align}
    \label{eq: BR convolution order} \Fcal\star\Gcal:=p_{13*} \bigl(p_{12}^*\Gcal\otimes p_{23}^*\Fcal\bigr).
\end{align}
Let $\omega$ denote the dualizing sheaf with its natural equivariant structure.
\begin{lem}
\label{lem:BS_conormal}
The following statements hold.
	\begin{enumerate}
\item $\mathsf K_{w,\varnothing}\cong\Ocal_{\Lambda_w}$.
\item $\mathsf K_{w,\{1,\ldots,\ell\}}\cong\omega_{\Lambda_w}\langle \ell(w)-b\rangle$, where $b:=\dim(\Bcal)$.
\item $\mathsf K_{w,I}\cong M_{k_\ell}^{I}\star\cdots\star M_{k_1}^{I}$, where
\[
    M_{k_j}^{I}:= \begin{cases} \omega_{\Lambda_{s_{k_j}}}\langle 1-b\rangle,&j\in I,\\ \mathcal O_{\Lambda_{s_{k_j}}},&j\notin I. \end{cases}
\]
	\end{enumerate}
We regard these sheaves on $\Zcal$ via extension by $0$, and suppress it in the notation.
\end{lem}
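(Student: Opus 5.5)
We will prove the three statements together by working on the $B$-equivariant slice and then applying $\ind_B^G$, which is compatible with convolution and with the identification $f:G\times^B\Zcal^e\xrightarrow{\sim}\Zcal$. The main tool is the explicit description of $\widetilde{\Sigma}_w$ (hence $\Sigma_w$) as a local complete intersection cut out by the regular sequence $(q_{V,j}\xi_{V,j})_{j\in J_V}$, $(\eta_{V,a})$, $(\zeta_{V,b})$. We first record that $\Sigma_w$ is the union of the conormal bundles to the boundary strata $E_J=\bigcap_{j\in J}E_j$, and that $\Ocal_{\Sigma_w}(-E_I)$ is the structure sheaf twisted so that along components in $I$ we use the ideal rather than the unit. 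In particular $\Ocal_{\Sigma_w}(-E)$ is the dualizing sheaf of $\Sigma_w$ up to $\omega_{BS_w}$ twist and a grading shift (logarithmic cotangent complex computation: $\omega_{\Sigma_w}\cong \Ocal_{\Sigma_w}(-E)\otimes\omega_{BS_w}^{\pm}$ pulled back, with the Koszul grading shift $\langle \ell(w)\rangle$-type contribution).

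For (1), we note that $\rho_p^*\Ocal_{\Sigma_w}$ is the structure sheaf of the derived fiber product $A_w^e\times_{T^*BS_w}\Sigma_w$; we will check, using the transversality of the codifferential with the log-conormal coordinates, that this fiber product is classical of the expected dimension and that its pushforward to $T^*\Bcal$ along $\varpi_p$ has no higher cohomology and equals $\Ocal$ of its image. The image is the closure of the conormal bundle to $\Bcal_w$ intersected with $\Zcal^e$, i.e.\ $\Lambda_w^e$ (compare with the regular semisimple graph $\Lambda^e_{\gfrak,w}$ by deforming $\nfrak$ to $\bfrak$, where the Bott--Samelson map becomes birational onto $\ol{\Lambda^e_{\gfrak,w}}$, and then using flatness of the family to restrict). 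Vanishing of higher direct images and normality will come from the standard argument for Bott--Samelson resolutions of Steinberg components (rational singularities / Kempf-type vanishing along $\Pbb^1$-fibrations), done inductively on $\ell$.

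For (3), which contains (1) and (2) as special cases modulo identifying the rank one pieces, we induct on $\ell$: $BS_w=BS_{w'}\times_{\Bcal}BS_{s_{k_\ell}}$-type fibration with $w'=s_{k_1}\cdots s_{k_{\ell-1}}$, and the log conormal variety $\Sigma_w$ is the corresponding fiber product of $\Sigma_{w'}$ and $\Sigma_{s_{k_\ell}}$ over the conormal diagonal. The twist $\Ocal(-E_I)$ splits as an external tensor product of the twists on each factor. Base change and the projection formula then identify $(\mathsf K_{w,I})^e$ after induction with $\mathsf K_{s_{k_\ell},\cdot}\star\mathsf K_{w',I'}$ in the order \eqref{eq: BR convolution order}; the base case $\ell=1$ is a direct computation on $\Pbb^1$-bundles giving $\Ocal_{\Lambda_s}$ resp.\ $\omega_{\Lambda_s}\langle 1-b\rangle$ (here $\Lambda_s$ is smooth of a simple shape, and the dualizing sheaf identification is the rank one case of the log-cotangent computation above). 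Statement (2) then follows either from (3) with $I$ everything, combined with Grothendieck duality for the proper map $\varpi_p$ restricted to $\Sigma_w$ (which sends $\omega$ to $\omega$ once (1) shows the pushforward of $\Ocal$ is $\Ocal_{\Lambda_w}$), tracking the shift $\langle\ell(w)-b\rangle$.

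The main obstacle will be the base-change step in (3): showing that the relevant fiber products are Tor-independent (so that the derived convolution agrees with the classical pushforward) and that the grading shifts $\langle 1-b\rangle$ combine correctly to $\langle \ell(w)-b\rangle$ rather than accumulating extra $b$'s; we will handle the first via the regular-sequence description of $\widetilde\Sigma_w$ and dimension counts, and the second by carefully recording the $\Gbb_m$-weight of the relative dualizing sheaf of each $\Pbb^1$-step.
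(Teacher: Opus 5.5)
There is a genuine gap, and it sits exactly where you expect things to be routine. You plan to show that $\Xi_w^e:=A_w^e\times^R_{T^*BS_w}\Sigma_w$ is classical of the expected dimension, and for (3) that the fiber products defining the convolution are Tor-independent. Both claims are false in general.

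The virtual dimension of $\Xi_w^e$ is $(\ell+b)+\ell-2\ell=b$. Take $G=\SL_3$ (so $b=3$) and $w=s_1s_2s_1$, and look over the open part of $E_2$. There $\bfrak_1=\bfrak_2$, and $\bfrak_0=\bfrak,\bfrak_1,\bfrak_3$ are distinct points of the $P_{s_1}$-line through $\bfrak$. The log condition only asks that $dp^*x$ kill $TE_2$. Since $p(E_2)$ is that line, $x$ ranges over the whole nilradical $\nfrak_{\pfrak}$ of the corresponding $P_{s_1}$-parabolic, which has dimension $b-1$. This gives a $4$-dimensional component of the classical truncation. Equivalently, $\Lambda_{s_1}\times_{\wt\Ncal}\Lambda_{s_2}\times_{\wt\Ncal}\Lambda_{s_1}$ has a component of dimension $2b+1$.

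So the derived structure cannot be discarded. What one can prove is an isomorphism of derived schemes $\Xi_w^e\cong C_w^e$, where $C_w^e$ is the fiber at $\bfrak_0=\bfrak$ of $\Lambda_{s_{k_1}}\times^R_{\wt\Ncal}\cdots\times^R_{\wt\Ncal}\Lambda_{s_{k_\ell}}$. The paper gets this as follows:
\begin{enumerate}
\item Write $\Sigma_w$ as the zero locus of the tautological log one-form and pull it back to $A_w^e$.
\item Induct along the Bott--Samelson tower. The new log component is $u\xi$ up to a unit, which cuts out $\Upsilon_j\cong A_{j-1}\times_{\wt\Ncal}\Lambda_{s_{k_j}}$.
\item The remaining components descend to the form for the shorter word.
\end{enumerate}
With this, (3) follows from the splitting of $\Ocal(-E_I)$ and the rank-one identification $\Ocal_{\Lambda_s}(-E)\cong\omega_{\Lambda_s}\langle 1-b\rangle$. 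Your induction on $\ell$ has the right spirit, but it must be carried out on the derived zero loci, not via Tor-independence.

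Second, (1) is not a formal consequence of (3), nor of a "standard Kempf-type vanishing along $\Pbb^1$-fibrations." Once $\Xi^e_w\cong C^e_w$ is established, (1) is precisely the Bezrukavnikov--Riche theorem $\Ocal_{\Lambda_{s_{k_\ell}}}\star\cdots\star\Ocal_{\Lambda_{s_{k_1}}}\cong\Ocal_{\Lambda_w}$. Its content is normality and rational singularities of $\ol{\Lambda_{\gfrak,w}}$, together with Tor-independence of its restriction to the nilpotent locus. They prove this by reduction to positive characteristic. Your deformation sketch outlines that strategy but does not supply it, so you need to invoke the theorem.

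You also need the Cohen--Macaulay property of $\Lambda_w$ from the same source. Your duality argument for (2) uses it silently, to know that the Grothendieck dual of $\Ocal_{\Lambda_w^e}$ is a shifted sheaf.

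Note also that the image of $q$ is not the closure of $T^*_{\Bcal_w}\Bcal$. The scheme $\Lambda_w^e$ has components over smaller cells and carries the scheme structure of the intersection, so "$\Ocal$ of the image" is the wrong target.

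With (1) and Cohen--Macaulayness in hand, (2) goes as you suggest, via Grothendieck duality for $q$. The remaining input is the Koszul computation of $\omega^\bullet_{\Xi^e_w}$ from the pulled-back log form, a section of the rank-$\ell$ bundle $\pi_A^*\Omega^1_{BS_w}(\log E)$. That computation produces the single shift $\langle \ell(w)-b\rangle$ directly, so the shift-accumulation issue you anticipate does not arise.
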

\begin{proof}
Put $\Xi_w^e:=A_w^e\times^R_{T^*BS_w}\Sigma_w$, where $A^e_w$ is defined by (\ref{eq:Awe_defn}). Let $q:=\Xi_w^e\to T^*\Bcal$ be the composition of the projection to $A_w^e$ with $\varpi_p$. Then
	\begin{align*}
    (\varpi_p)_* \rho_p^*\bigl((\iota_w)_*\Ocal_{\Sigma_w}\bigr) \cong q_*\Ocal_{\Xi_w^e}.
\end{align*}
Define
\[
    C_w:=\Lambda_{s_{k_1}}\times^R_{\wt\Ncal}\cdots\times^R_{\wt\Ncal}\Lambda_{s_{k_\ell}}.
\]
The flags in $C_w$ are ordered $(\bfrak_0,\ldots,\bfrak_\ell)$. Let $C_w^{e}$ denote the fiber at $\bfrak_0=\bfrak$.

We first show there is a $B\times\C^\times$-equivariant isomorphism of derived schemes
	\begin{align}
    \Xi_w^e \xrightarrow{\sim}C_w^e. \label{eq:BS_conormal_comparison}
\end{align}
Let $\pi :T^*BS_w\longrightarrow BS_w$ be the projection. The tautological one-form on $T^*BS_w$, followed by the natural inclusion $\Omega^1_{BS_w}\hookrightarrow\Omega^1_{BS_w}(\log E)$, defines a section
	\begin{align*}
    \tau_{\log}\in \Gamma\left( T^*BS_w,\pi^*\Omega^1_{BS_w}(\log E) \right).
\end{align*}
On an SNC chart with coordinates $t_1,\ldots,t_r,y_1,\ldots,y_m$ in which $E$ is defined by $t_1\cdots t_r=0$ and dual cotangent coordinates $\eta_1,\ldots,\eta_r,\zeta_1,\ldots,\zeta_m$, the coefficients of $ \tau_{\log}$ in the logarithmic basis are $t_1\eta_1,\ldots,t_r\eta_r,\zeta_1,\ldots,\zeta_m.$ Hence, $\Sigma_w=Z(\tau_{\log})${ , the classical vanishing locus which in this case also coincides with the derived vanishing locus.}

Pulling $\tau_{\log}$ back along $\rho_p:A_w^e\to T^*BS_w$ gives the logarithmic one-form
	\begin{align*}
    s_w:=\rho_p^*\tau_{\log} \in \Gamma\left( A_w^e,\pi_A^*\Omega^1_{BS_w}(\log E) \right),
\end{align*}
where $\pi_A:A_w^e\to BS_w$ is the projection. Derived zero loci (denoted by $Z^R(-)$) commute with base change, so
	\begin{align}
    Z^R(s_w) \cong \Xi_w^e. \label{eq:BS-derived-zero-locus}
\end{align}

To show \eqref{eq:BS_conormal_comparison}, we shall prove that $Z^R(s_w)\cong C_w^e$. We do so by induction along the Bott-Samelson tower. For $0\leq j\leq\ell$, put
	\begin{align*}
    BS^{(j)}:= P_{s_{k_1}}\times^B\cdots\times^B P_{s_{k_j}}/B, \qquad E^{(j)}:=E_1+\cdots+E_j,
\end{align*}
and let $\pi_j:BS^{(j)}\longrightarrow BS^{(j-1)}$ forget the last flag. Since $\pi_j$ is a smooth $\mathbb P^1$-bundle and $E_j$ is a section, there is an exact sequence
	\begin{align*}
    0\longrightarrow T_{BS^{(j)}/BS^{(j-1)}}(-\log E_j) \longrightarrow T_{BS^{(j)}}(-\log E^{(j)}) \longrightarrow \pi_j^*T_{BS^{(j-1)}}(-\log E^{(j-1)}) \longrightarrow0.
\end{align*}

The first term has rank one. For $0\leq j\leq \ell$, take \[
    A_j=BS^{(j)}\times_\Bcal T^*\Bcal\cong \{(\bfrak_0,\dots,\bfrak_j,x),x\in \nfrak_{\bfrak_j}\}
\]
such that $A_\ell=A_w^e$. Let $a_j:A_j\to BS^{(j)}$ be the projection. Define a similar one-form $s_j\in\Gamma\bigl(A_j,a_j^*\Omega^1_{BS^{(j)}}(\log E^{(j)})\bigr)$. Take an $\mathfrak{sl}_2$-triple $(e,h,f)$ associated with the simple root $\alpha_{k_j}$, with $e\in\mathfrak n_{\mathfrak b_{j-1}}$. Let $u$ be the affine coordinate on the standard chart of $P_{s_{k_j}}/B\simeq\mathbb P^1$ parametrized by $\exp(uf)$ such that $E_j$ is given by $u=0$. Then affine-locally, $\mathfrak b_j(u) =\operatorname{Ad}(\exp(uf))\mathfrak b_{j-1}$. Let $\xi$ be the coordinate along the root space spanned by $e$. For $x\in \nfrak_{\bfrak_j}(u)$, we may write
	\begin{align*}
    x=x'+\xi\operatorname{Ad}(\exp(uf))e=x'+\xi(e-uh-u^2f)
\end{align*}
where $x'\in \nfrak_{P_{s_{k_j}}}$. Then $x \in\nfrak_{\bfrak_{j-1}}$ is equivalent to $u\xi=0$, which is the $j$-th $\Pbb^1$ direction of $s_j$ up to a unit. 
On the zero locus $\Upsilon_j$ {in} $A_j$, defined by $x\in\nfrak_{\bfrak_{j-1}}$, the form $s_j$ descends to the dual of the quotient $\left(\pi_j^*T_{BS^{(j-1)}}(-\log E^{(j-1)}))\right)|_\Upsilon$ to be the pullback of the section $s_{j-1}$ for the truncated word. In all, we have
\[
    Z^R(s_j)\cong \Upsilon_j\times^R_{A_{j-1}}Z^R(s_{j-1}).
\]
Since $\Upsilon_j\cong A_{j-1}\times_{\wt\Ncal}\Lambda_{s_{k_j}} $, induction on length therefore gives
	\begin{align*}
    Z^R(s_w) \cong \Lambda_{s_{k_1}} \times^R_{\wt\Ncal}\cdots \times^R_{\wt\Ncal} \Lambda_{s_{k_\ell}}\big|_{\bfrak_0=\bfrak} (=C_w^e).
\end{align*}
Together with \eqref{eq:BS-derived-zero-locus}, this proves \eqref{eq:BS_conormal_comparison}.

By \cite[Theorem 2.2.1 and Remark 2.2.2(2)]{BR}, $\Ocal_{\Lambda_{s_{k_\ell}}}\star\cdots\star \Ocal_{\Lambda_{s_{k_1}}} \cong \Ocal_{\Lambda_w}.$ Thus part (1) follows.

For part (2), we have
\[
    K_{w,\{1,\dots,\ell\}}=(\varpi_p)_*\left( \rho_p^*\bigl((\iota_w)_*\Ocal_{\Sigma_w}\bigr) \otimes\Ocal(-E)\right)=\ind_B^G q_*\left(\Ocal_{\Xi_w^e}(-E)\right),
\]
which we need to identify with $\omega_{\Lambda_w}\langle \ell(w)-b\rangle$.

Let $\omega^\bullet_{\Sigma_w^e}$ denote the dualizing complex of $\Sigma_w^e$. By \cite[Theorem 2.2.1]{BR}, $\Lambda_w^e$, is Cohen-Macaulay. Therefore we can take duality of $q_*\Ocal_{\Xi_w^e}\cong \Ocal_{\Lambda_w^e}$ from part (1) and obtain
\[
    q_*\omega^\bullet_{\Xi_w^e}\cong \omega_{\Lambda_w^e}[b].
\]
Since $\omega_{\Lambda_w}=\ind_B^G (\omega_{\Lambda_w^e}\otimes \C_{2\rho})$, it suffices to show that
\begin{align}
    \label{iso: dual} \Ocal_{\Xi_w^e}(-E)\cong \omega^\bullet_{\Xi_w^e}\otimes \C_{-2\rho}\langle \ell(w)-b\rangle[-b].
\end{align}

Recall isomorphism \eqref{eq:BS-derived-zero-locus} realizing $\Xi_w^e$ as the vanishing locus $Z^R(s_w)$ inside the rank-$\ell(w)$ vector bundle $\Vcal:=\pi_A^*\Omega^1_{BS_w}(\log\ E)$. The Koszul complex of this section computes the dualizing complex $\omega_{\Xi_w^e}$. Therefore
\begin{align}
    \label{iso: omega} \omega^\bullet_{\Xi_w^e}=&(\omega_{A_w^e}\otimes \det \Vcal\langle-\ell(w)\rangle)|_{\Xi_w^e}[\dim(A_w^e)-\mathrm{rk}(\Vcal)]
\end{align}
Let $p_0,p_\ell: BS_w\to \Bcal$ be the endpoint maps. Then
\[
    \omega_{BS_w}(E)\cong p_0^*\Ocal_\Bcal(-\rho)\otimes p_\ell^*\Ocal_\Bcal(-\rho)\cong \C_{-\rho}\otimes p_\ell^*\Ocal_\Bcal(-\rho).
\]
Then $\omega_{A_w^e}\cong \pi_A^*\left(\omega_{BS_w}\otimes p_\ell^*\omega^{-1}_{\Bcal}\right)\langle b\rangle$ while $\det \Vcal\cong \pi_A^*\omega_{BS_w}(E).$ Plugging these into \eqref{iso: omega}, we obtain
\begin{align*}
    \omega_{\Xi_w^e}^\bullet\cong &\pi_A^*\left(\omega_{BS_w}\otimes p_\ell^*\omega^{-1}_{\Bcal}\otimes \omega_{BS_w}(E)\right)|_{\Xi_w^e}\langle b-\ell(w)\rangle [b]\\ \cong&\pi_A^*\left(\C_{-2\rho}\otimes p_\ell^*\Ocal_\Bcal(-2\rho)(-E)\otimes p_\ell^*\omega_\Bcal^{-1}\right)|_{\Xi_w^e}\langle b-\ell(w)\rangle [b]\\ \cong&\Ocal_{\Xi_w^e}(-E)\otimes \C_{-2\rho}\langle b-\ell(w)\rangle [b],
\end{align*}
which proves \eqref{iso: dual}.

For part (3), similar to (2), we need to show
\begin{align}
    \label{iso: (3)} q_*\mathcal O_{\Xi_w^e}(-E_I)\cong \left(M_{k_\ell}^{I}\star\cdots\star M_{k_1}^{I}\right)^e
\end{align}
Recall the isomorphism  \eqref{eq:BS_conormal_comparison}. Let $p_j: C_w^e\to \Lambda_{s_{k_j}}$ be the map defined by $(\bfrak_0,\dots,\bfrak_\ell,x)\mapsto (\bfrak_{j-1},\bfrak_j,x)$. Then
	\begin{align*}
    \mathcal O_{\Xi_w^e}(-E_I) \cong \bigotimes_{j=1}^{\ell}p_j^*\Lcal_j, \qquad \Lcal_j:= \begin{cases} \Ocal_{\Lambda_{s_{k_j}}}(-E_j),&j\in I,\\ \Ocal_{\Lambda_{s_{k_j}}},&j\notin I. \end{cases}
\end{align*}
On the other hand, $\Lcal_j\cong M_{k_j}^I$. Let $p_{0\ell}$ be the end point projection $C_w\to \wt{\Ncal}\times\wt{\Ncal}$. Then
\[
    M_{k_\ell}^{I}\star\cdots\star M_{k_1}^{I}\cong (p_{0\ell})_*\left(\bigotimes_{j=1}^{\ell}\operatorname{pr}_j^*\Lcal_j\right).
\]

Let $p_{0\ell}^e:C_w^e\to \nfrak\times\wt{\Ncal}$ be the restriction of $p_{0\ell}$ at the fixed Borel and $r: \nfrak\times\wt{\Ncal}\to \wt{\Ncal}$ is the projection. Then the isomorphism \eqref{eq:BS_conormal_comparison} intertwines $q$ and $r\circ p_{0\ell}^e$. Therefore \eqref{iso: (3)} follows.
\end{proof}

\begin{coro}\label{coro: gr nabla_theta}
Suppose $w\in W_\theta$ and $c_j\in[0,1)$ for every $j$. We have the following identifications in $\Coh_{G\times\mathbb{G}_m}(\Zcal^R)$:
	\begin{enumerate}
\item $\gr_{\Hcal,\lambda}(\nabla_{w,\theta})\cong \Ocal_{\Lambda_w}\otimes\Ocal(w\lambda-\lambda+\rho,\rho)\langle b-\ell(w)\rangle.$


\item $\gr_{\Hcal,\lambda}(\Delta_{w,\theta})\cong \left(M_{k_\ell}\star\cdots\star M_{k_1}\right) \otimes\Ocal(w\lambda-\lambda+\rho,\rho)\langle b-\ell(w)\rangle$ where \[
    M_{k_j}=\begin{cases}\omega_{\Lambda_{s_{k_j}}}\langle 1-b\rangle,&\theta(\gamma_j)=1,\\[2pt] \mathsf \Ocal_{\Lambda_{s_{k_j}}},&\theta(\gamma_j)\neq1.\end{cases}
\] 
	\end{enumerate}
\end{coro}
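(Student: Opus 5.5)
The corollary is a direct specialization of Theorem \ref{thm: gr general theta}, combined with Lemma \ref{lem:BS_conormal}. The plan is to compute the divisors $P_\lambda$ and $E_\theta$ under the hypothesis $c_j\in[0,1)$, recognize the resulting sheaves $\mathsf K^\nabla_{w,\lambda}$ and $\mathsf K^\Delta_{w,\lambda}$ as instances of $\mathsf K_{w,I}$, and then substitute into \eqref{eq: gr nabla general} and \eqref{eq: gr Delta general}.

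\emph{Step 1: the divisors.} Since $c_j\in[0,1)$, we have $\lfloor c_j\rfloor=0$ for every $j$, so $P_\lambda=\sum_j\lfloor c_j\rfloor E_j=0$. Moreover, $c_j=\langle\lambda,\gamma_j\rangle$ is an integer precisely when $\theta(\gamma_j)=1$, because $\theta=e^{2\pi i\lambda}$. In the range $[0,1)$ this happens exactly when $c_j=0$. Setting $I_\theta:=\{j:\theta(\gamma_j)=1\}$, we therefore get $E_\theta=E_{I_\theta}$.

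\emph{Step 2: identify the $\mathsf K$'s.} Substituting into the definitions from Section \ref{SS_gr_costand} gives
\[
(\mathsf K^\nabla_{w,\lambda})^e=(\mathsf K_{w,\varnothing})^e,\qquad (\mathsf K^\Delta_{w,\lambda})^e=(\mathsf K_{w,I_\theta})^e .
\]
Both sides are defined as $(\varpi_p)_*\bigl(\rho_p^*[(\iota_w)_*\Ocal_{\Sigma_w}]\otimes\Ocal(\cdot)\bigr)$, and the equivariant structures agree because we are twisting by the same divisor line bundles pulled back from $BS_w$. Applying $\ind_B^G$ then gives $\mathsf K^\nabla_{w,\lambda}\cong\mathsf K_{w,\varnothing}$ and $\mathsf K^\Delta_{w,\lambda}\cong\mathsf K_{w,I_\theta}$.

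\emph{Step 3: conclude.} Lemma \ref{lem:BS_conormal}(1) gives $\mathsf K_{w,\varnothing}\cong\Ocal_{\Lambda_w}$. Lemma \ref{lem:BS_conormal}(3) with $I=I_\theta$ gives $\mathsf K_{w,I_\theta}\cong M_{k_\ell}\star\cdots\star M_{k_1}$, where $M_{k_j}=\omega_{\Lambda_{s_{k_j}}}\langle1-b\rangle$ if $\theta(\gamma_j)=1$ and $M_{k_j}=\Ocal_{\Lambda_{s_{k_j}}}$ otherwise. Plugging these into Theorem \ref{thm: gr general theta} yields (1) and (2), with the same twist $\Ocal(w\lambda-\lambda+\rho,\rho)\langle b-\ell(w)\rangle$ in both cases.

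The only point requiring care is Step 1: one must confirm that the integrality criterion for $c_j$ matches the condition $\theta(\gamma_j)=1$. This was already noted at the end of the proof of Lemma \ref{lem: lattice classes}. Apart from that, the argument is bookkeeping.
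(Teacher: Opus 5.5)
Your proposal is correct and matches the paper's proof: the paper also observes that $c_j\in[0,1)$ forces $P_\lambda=0$, so that $\mathsf K^\nabla_{w,\lambda}=\mathsf K_{w,\varnothing}$ and $\mathsf K^\Delta_{w,\lambda}=\mathsf K_{w,I}$ with $I=\{j:\theta(\gamma_j)=1\}$, and then concludes from Theorem \ref{thm: gr general theta} together with Lemma \ref{lem:BS_conormal}(1),(3). Your extra integrality check in Step 1 is harmless but not needed, because $E_\theta$ is defined directly by the condition $\theta(\gamma_j)=1$.
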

\begin{proof}
The condition $c_j\in[0,1)$ for all $j$ gives $P_\lambda=0$. Hence the corollary follows from Theorem \ref{thm: gr general theta} and Lemma \ref{lem:BS_conormal}.
\end{proof}

\begin{rem}
An alternative proof of this result (for $\theta=1$) will appear in \cite{BIR}, see also \cite[Remark 6.18]{dv22}. Notice that the present argument relies on the result of  \cite{BR} proved by reduction to positive characteristic, while \cite{BIR}, \cite{dv22} employ methods intrinsic to Hodge modules theory.
\end{rem}

\section{Traces of length $0$ elements}\label{S_traces}
In this section, take $G=\SL_n$ with Weyl group $W=S_n$. Let $W^{ex}=X^*(T)\rtimes W$ be the extended affine Weyl group.
\subsection{Length $0$ elements}
For $1\leq a\leq n$, let $\omega_a$ be the $a$-th fundamental weight. Put $\omega_0=\omega_n=0$ and $c\in S_n$ be the Coxeter element $s_1\cdots s_{n-1}$. For any $\nu\in P$, denote by $t_\nu\in W^{ex}$ the translation by $\nu$.

\begin{lem}
The length zero subgroup $\Omega$ of $W^{ex}$ is generated by $t_{\omega_1}c $ with $( t_{\omega_1}c)^a=t_{\omega_a}c^a$ for all $1\leq a\leq n$.
\end{lem}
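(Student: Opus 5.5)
Here $G=\SL_n$, so $W^{ex}=P\rtimes S_n$ with $P$ the weight lattice, and it contains the affine Weyl group $W_{\mathrm{aff}}=Q\rtimes S_n$. The plan is to use two standard facts. First, $W^{ex}=W_{\mathrm{aff}}\rtimes\Omega$, where $\Omega$ is the stabilizer of the fundamental alcove. Second, the projection $W^{ex}\to W^{ex}/W_{\mathrm{aff}}\cong P/Q\cong\Z/n$ therefore restricts to an isomorphism $\Omega\xrightarrow{\sim}P/Q$. So $\Omega$ is cyclic of order $n$. Since $\omega_1$ generates $P/Q$, any length zero element mapping to the class of $\omega_1$ generates $\Omega$.

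It therefore suffices to check that $t_{\omega_1}c$ has length zero. We use the length formula for $t_\nu w$: it counts affine root hyperplanes separating the fundamental alcove $A_0$ from its image. So we show directly that $t_{\omega_1}c$ maps $A_0$ to itself. Realize $P$ in $\R^n/\R(1,\dots,1)$ and write $A_0=\{x_1>x_2>\dots>x_n>x_1-1\}$, with $\omega_1=\epsilon_1$.
\begin{itemize}
\item Compute $c=s_1\cdots s_{n-1}$ as a permutation of coordinates. With the convention $w(\epsilon_i)=\epsilon_{w(i)}$, it is the cycle $n\mapsto n-1\mapsto\dots\mapsto1\mapsto n$, or its inverse depending on the action convention.
\item Verify that $x\mapsto c(x)+\epsilon_1$ sends the inequalities defining $A_0$ to themselves. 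Concretely, $(x_1,\dots,x_n)\mapsto(x_n+1,x_1,\dots,x_{n-1})$ preserves $A_0$.
\end{itemize}
If the convention produces the other cycle, I would replace $c$ by the convention-compatible Coxeter element. The only real bookkeeping is making the composition convention in $t_\nu w$ consistent with this.

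For the power formula I will argue by induction on $a$, using $t_\nu w\,t_\mu w'=t_{\nu+w\mu}ww'$. This gives
\[
(t_{\omega_1}c)^a=t_{\omega_1+c\omega_1+\dots+c^{a-1}\omega_1}\,c^a .
\]
With the cycle found above, the orbit of $\epsilon_1$ under $c$ is $\epsilon_1,\epsilon_2,\dots$. The orbit sum $\epsilon_1+\dots+\epsilon_a$ is exactly $\omega_a$ modulo $\R(1,\dots,1)$, i.e.\ in $P$. For $a=n$ this sum is $0=\omega_n$ and $c^n=1$, consistent with $\Omega$ having order $n$.

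The main obstacle is purely conventional: the direction of the cycle $c$ depends on whether $S_n$ acts on $P$ on the left or on the right. I expect to fix the convention so that $c\epsilon_i=\epsilon_{i+1}$, which yields both the alcove-preservation check and the orbit sum $\omega_a$ simultaneously.
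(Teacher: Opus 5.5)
Your proposal is correct and follows the paper's argument: $t_{\omega_1}c$ preserves the fundamental alcove, hence has length zero and generates $\Omega\cong P/Q$, and the power formula comes from $c^i t_{\omega_1}c^{-i}=t_{c^i\omega_1}$ summed along the $c$-orbit of $\omega_1$. One minor slip: with $w(\epsilon_i)=\epsilon_{w(i)}$ and $c=s_1\cdots s_{n-1}$ (rightmost factor acting first), one gets $c(i)=i+1$, so your first bullet has the cycle reversed. The convention $c\epsilon_i=\epsilon_{i+1}$ that you ultimately use is the correct one, and no replacement of $c$ is needed.
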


\begin{proof}
The element $t_{\omega_1}c$ fixes the fundamental affine alcove and hence generates $\Omega$. Moreover, $c^{i}t_{\omega_1}c^{-i}=t_{c^i(\omega_1)}$ and $\prod_{j=0}^{a-1}t_{c^j(\omega_1)}=t_{\omega_a}$, which gives the formula for its $a$-th power.
\end{proof}
Since $c^a\rho/n-\rho/n=-\omega_a$, Corollary \ref{coro: gr nabla_theta}(1), applied with $\lambda=\rho/n$, gives the following special case. Its hypothesis holds for every $w\in W_\theta$: for any positive coroot $\beta^\vee$ one has $\langle\rho/n,\beta^\vee\rangle=\mathrm{ht}(\beta^\vee)/n\in[1/n,(n-1)/n]\subset[0,1)$. The same calculation gives $e^{2\pi i\rho/n}(\beta^\vee)=e^{2\pi i\operatorname{ht}(\beta^\vee)/n}\neq1$, so $D_\theta=0$ for every $w\in W_\theta$.
\begin{coro}\label{coro:gr length 0}
For $\theta=e^{2\pi i\rho/n}$ and $1\leq a\leq n$, $\Delta_{c^a,\theta}\cong IC_{c^a,\theta}\cong\nabla_{c^a,\theta}.$ Moreover,
\[
    \gr_{\Hcal,\lambda}(\nabla_{c^a,e^{2\rho\pi \sqrt{-1}/n}})\otimes \Ocal(-\rho,-\rho)\cong \Ocal_{\Lambda_{c^a}}(-\omega_a,0)\langle b-a(n-a)\rangle.
\]
\end{coro}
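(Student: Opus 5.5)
The plan is to read both assertions off Corollary \ref{coro: gr nabla_theta} and the discussion just before the statement, with $\lambda=\rho/n$ and $w=c^a$.

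\textbf{Step 1: $c^a\in W_\theta$.} First I check that $c^a$ stabilizes $\theta=e^{2\pi i\rho/n}$. This amounts to $c^a\lambda-\lambda\in X^*(T)$, which holds since $c^a\rho/n-\rho/n=-\omega_a$ is a weight of $\SL_n$. (Here $X^*(T)$ is the weight lattice $P$; this is what makes $t_{\omega_a}$ available in $W^{ex}$.) One can also see it structurally: $t_{\omega_1}c$ has length zero, so conjugation by it preserves the affine Dynkin diagram, and correspondingly $\theta$, which is $\rho/n$ up to the lattice, is fixed.

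\textbf{Step 2: $\Delta\cong IC\cong\nabla$.} Fix a reduced expression of $c^a$. By Lemma \ref{lem: BS monodromy}, each $\gamma_j$ is a positive coroot, since the standard inversion-set argument applies to a reduced word. Hence $c_j=\mathrm{ht}(\gamma_j)/n\in(0,1)$, so $c_j\notin\Z$ and $\theta(\gamma_j)\neq1$ for all $j$.

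The local system $\Lbb_{c^a,\theta}$ then has nontrivial monodromy around every boundary divisor $D_j$ of the SNC compactification $\wt{BS}_{c^a}$. It follows that $\Lbf=\Lbf^!$; equivalently, the Deligne lattices agree because $D_\theta=0$ in Lemma \ref{lem: lattice classes}. Pushing forward along the proper map $\wt p$ gives $\Delta^e_{c^a,\theta}\cong\nabla^e_{c^a,\theta}$ in the monodromic category.

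Since the canonical map $j_!\to j_\dagger$ factors through $j_{!\dagger}$, an isomorphism $j_!\Lbb\to j_\dagger\Lbb$ forces both $j_!\Lbb\to j_{!\dagger}\Lbb$ and $j_{!\dagger}\Lbb\to j_\dagger\Lbb$ to be isomorphisms. I still need to check that the isomorphism constructed above is the canonical map. It restricts to the identity on the open cell, and the comparison $j_!\to j_\dagger$ is determined by its restriction there (the maps are adjunction-induced), so this holds. Applying $\Ind_B^G$ yields the first assertion.

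\textbf{Step 3: the formula.} Apply Corollary \ref{coro: gr nabla_theta}(1), whose hypotheses were verified in Step 2. This gives
\[
\gr_{\Hcal,\lambda}(\nabla_{c^a,\theta})\cong\Ocal_{\Lambda_{c^a}}\otimes\Ocal(c^a\lambda-\lambda+\rho,\rho)\langle b-\ell(c^a)\rangle=\Ocal_{\Lambda_{c^a}}(-\omega_a+\rho,\rho)\langle b-\ell(c^a)\rangle.
\]
Tensoring with $\Ocal(-\rho,-\rho)$ gives the stated twist $(-\omega_a,0)$.

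It remains to compute $\ell(c^a)=a(n-a)$. Here $c=s_1\cdots s_{n-1}$ is the $n$-cycle $i\mapsto i+1$ or its inverse. So $c^a$ is a cyclic shift by $a$, whose inversions are exactly the pairs consisting of one of the first $a$ positions and one of the last $n-a$ positions. This gives $a(n-a)$ inversions.

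\textbf{Main obstacle.} The delicate point is Step 2. One must make sure that \emph{every} $\gamma_j$ is a positive coroot, and that the sign conventions for meridians match, so that $c_j\in(0,1)$ rather than $(-1,0)$. One must also justify that equality of the extensions gives the triple isomorphism through the canonical maps. For the sign issue I would first rely on the inversion-set description of $\{\gamma_j\}$, which is the set of coroots of $c^{-a}$-inversions and is positive regardless of convention.
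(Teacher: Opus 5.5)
Your proposal is correct and follows the paper's own route. You check $c^a\in W_\theta$ via $c^a\lambda-\lambda=-\omega_a\in X^*(T)$, then use that every $\gamma_j$ is a positive coroot, so $c_j=\mathrm{ht}(\gamma_j)/n\in(0,1)$ and hence $P_\lambda=0$ and $D_\theta=0$, and finally plug into Corollary \ref{coro: gr nabla_theta}(1) with $\ell(c^a)=a(n-a)$. Your explicit cleanness argument for $\Delta\cong IC\cong\nabla$ spells out what the paper leaves implicit in ``$D_\theta=0$''. One small correction to your closing remark: the sign of the meridian is fixed by Lemma \ref{lem: BS monodromy} itself, not by the positivity of the inversion set.
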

{ It is easy to see that under the two realization equivalence from \cite{Bezrukavnikov_Hecke} the right hand side indeed corresponds to the length $0$ object in the affine Hecke category for $\operatorname{PGL}_n$ corresponding to $a$.}

\subsection{Cuspidal $D$-modules}\label{SS:cuspidal}
Let $\Unip$ be the unipotent cone. We use the subscript $_r$ to denote regular locus. The regular unipotent orbit $\Unip_r\subset \Unip$ has fundamental group $\pi_1(\Unip_r)=\Z/n\Z$. For $0\leq a\leq n-1$, let $\Kbb_a$ be the integrable connection on $\Unip_r$ corresponding to $e^{2\pi a\sqrt{-1}/n}\in (\Z/n\Z)^*$. Define the {$D_G$-module} $\Nbb_a$ to be the intermediate extension of $\Kbb_a$ to $G$. Equip $\Kbb_a$ with the type-$(0,0)$ Hodge structure. When $a$ is coprime to $n$, $\Nbb_a$ is clean, i.e, it is also the $\dagger$- or $!$-extension of $\Kbb_a$ \cite[(17.8.4)]{Lusztig_character_sheaves_IV}. For every $z\in Z_G$, let $z\Nbb_a$ denote the pushforward of $\Nbb_a$ under the multiplication map by $z$.

{  The following theorem is one of the main results of this paper.}

\begin{thm}\label{thm: coxeter_cuspidal}
When $a$ is coprime to $n$, there is an isomorphism of Hodge modules
	\[
    \CH(\nabla_{c^a,e^{2\pi\rho\sqrt{-1}/n}})\cong\bigoplus_{z\in Z_G} z\Nbb_{a}(-m_a)
\]
for $m_a=(n-1+a(n-a))/2$.
\end{thm}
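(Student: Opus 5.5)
The plan is to identify $\CH(\nabla)$, where $\nabla:=\nabla_{c^a,\theta}$ and $\theta=e^{2\pi\rho\sqrt{-1}/n}$, in three stages: first as a semisimple perverse object, then by its support and local systems, and finally by its Hodge twist. Throughout I would use Corollary \ref{coro:gr length 0}, which gives $\nabla\cong IC_{c^a,\theta}\cong\Delta_{c^a,\theta}$. In particular $\nabla$ is a pure Hodge module, and it is simultaneously a $\dagger$- and a $!$-extension.

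\textbf{Purity and perversity.} In $\CH=p_\dagger a^\dagger$ the map $p$ is proper and $a$ is smooth of relative dimension $\dim\Bcal$.
\begin{itemize}
\item Purity: by Saito's decomposition theorem, $\CH(\nabla)$ is pure, hence a direct sum of shifted simple Hodge modules.
\item Perversity: I would use the cleanness. Since $\nabla=\Delta$, we have $\CH(\nabla)=p_\dagger a^\dagger j_!(\cdots)=p_! a^\dagger j_!(\cdots)$, and this is both $\le 0$ and $\ge 0$ by the standard argument on stalks. Concretely, the map from the open $G$-orbit piece $a^{-1}(\text{orbit of }c^a)\to G$ is affine, being a fibration of the Deligne--Lusztig type variety $\{(g,B'):(B',gB')\in O_{c^a}\}$. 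Artin vanishing then applies to both the $*$- and $!$-pushforwards, so $\CH(\nabla)$ is perverse.
\end{itemize}

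\textbf{Support and local systems.} Every simple constituent lies in $\CS_\theta^\heartsuit$, and I would show that it is cuspidal. The stabilizer $W_\theta$ is the cyclic group generated by $c$. For any proper Levi $L\supset T$ the restriction of $\theta$ is not $W_L$-fixed, which one sees from the same height computation $\langle\rho/n,\beta^\vee\rangle\notin\Z$ used before Corollary \ref{coro:gr length 0}. Hence parabolic restriction to $L$ of any object of $\CS_\theta$ vanishes: its Hecke-side avatar is the restriction to a monodromy that does not occur in $\Hcal^L_\theta$.

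Lusztig's classification of cuspidal character sheaves on $\SL_n$ \cite{Lusztig_character_sheaves_IV} then says that each constituent is of the form $z\Nbb_b$ with $z\in Z_G$ and $b$ coprime to $n$. To pin down the constituents I would compute the stalk of $\CH(\nabla)$ at $zu$, where $u\in\Unip_r$ is regular unipotent.
\begin{itemize}
\item The fiber of $p$ over $zu$ is the single Borel containing $u$, and $(B,zuB)=(B,B)$ lies in the closed orbit. At first sight this looks like a zero stalk, which would contradict the expected answer. So the naive count must be wrong, and the stalk has to be computed through the monodromic twist: $a$ lands in $Y=(\wt\Bcal\times\wt\Bcal)/T$, where the relative position is read off after the $T$-twist.
\item More robustly, I would compute on the Lie/coherent side using Theorem \ref{thm: grCH}. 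Corollary \ref{coro:gr length 0} gives $\wt{\CH}(\Ocal_{\Lambda_{c^a}}(-\omega_a,0))$. Its restriction to the regular nilpotent locus $G\times\Nilp_r$ is a line bundle on each component $Z_G\times\Nilp_r$, whose $Z_G$- and $\pi_1$-weights are determined by $\omega_a$.
\end{itemize}
Matching these weights forces the local system on $zU_r$ to be $\Kbb_a$ with multiplicity one for each $z\in Z_G$. The central character $\omega_a|_{Z_G}$ is what selects $a$ rather than another $b$. Comparing ranks of $\gr$ would rule out any extra summands.

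\textbf{Tate twist.} Finally I would determine $m_a$ by a weight computation. $\Lbb_{c^a,\theta}$ has weight $0$ on a cell of dimension $\ell(c^a)+\dim T$. Then $j_\dagger$, $\Ind_B^G$, $a^\dagger$ (relative dimension $\dim\Bcal$), and $p_\dagger$ shift weights by computable amounts, while $\Nbb_a$ has weight $\dim\Unip_r=n^2-n$ plus the central shift. Here $\ell(c^a)=a(n-a)$, and $\dim\Bcal=n(n-1)/2$ contributes the $(n-1)$ term. Half of the weight discrepancy gives $m_a=(n-1+a(n-a))/2$. I would cross-check this against the grading shift $\langle b-a(n-a)\rangle$ of Corollary \ref{coro:gr length 0}.

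I expect the main obstacle to be the cuspidality/support step, that is, showing that no non-cuspidal constituents appear. If Lusztig's restriction argument is awkward in the monodromic Hodge setting, I would instead argue via $\gr$: the support of $\wt{\CH}(\Ocal_{\Lambda_{c^a}})$ must lie in $Z_G\times\Nilp$ because $\Lambda_{c^a}$ meets the fixed-point condition only when $g$ is central-times-unipotent.
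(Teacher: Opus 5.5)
Your purity step and your cleanness-plus-Artin-vanishing argument for perversity match the paper. The remaining steps have genuine gaps.

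\textbf{Cuspidality and support.} This step rests on a false claim: parabolic restriction does not kill $\CS_\theta$. Indeed $\nabla_{e,\theta}\in\Hcal_\theta^\heartsuit$, and $\CH(\nabla_{e,\theta})$ is the Grothendieck--Springer pushforward of the Kummer local system $\Lcal_\theta$. It has full support on $G$ and nonzero restriction to $T$. Your premise already fails for $L=T$, and the fact that no reflection fixes $\theta$ does not make $\CS_\theta$ cuspidal. Cuspidality is a property of the specific object $\CH(\nabla_{c^a,\theta})$, and any proof must use that $c^a$ is an $n$-cycle.

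Your $\gr$ fallback fails as well. $\Lambda_{c^a}$ contains the zero section over the closure of the $c^a$-orbit in $\Bcal\times\Bcal$. So the support of $r^*\Ocal_{\Lambda_{c^a}}$ contains every point $(g,h,0)$ with $g$ in the Borel $h$, and hence maps onto all of $G$. The vanishing off $Z_G\Unip$ is a cohomological cancellation that is not visible on supports.

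The missing idea is Lemma \ref{lemma_section}: by quasi-convexity of $c^a$ and \cite{NTY}, $G\times^B(Bc^aB)\cong G/Z_G\times S_a$. Consequently $q^\dagger\nabla_{c^a,\theta}$ is the clean extension of a rank one connection pulled back along $G\to G/Z_G$ through $\chi_a\colon z\mapsto z^a$. So $Z_G$ acts on every stalk by the order-$n$ character $\chi_a$. Equivariance makes this action factor through $\pi_0(C_G(x))\cong\mu_{d(x)}$. Together these force every simple constituent to be an intermediate extension from some $z\Unip_r$, and they single out $\Kbb_a$. The same isomorphism is also what makes $\kappa\colon G\times^B(Bc^aB)\to G$ affine, since its source is affine. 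Describing it as a fibration by Deligne--Lusztig type varieties does not prove the morphism is affine.

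\textbf{Multiplicity and Hodge type.} Neither is established. The fiber of $p\colon G\times\Bcal\to G$ over $zu$ is all of $\Bcal$, not the Springer fiber; the relevant locus is $\{B'\mid zu\in B'c^aB'\}$. ``Matching weights'' or ``comparing ranks of $\gr$'' does not compute the multiplicity space $Q_a$ in $\CH(\nabla_{c^a,\theta})\cong(\bigoplus_z z\Nbb_a)\otimes Q_a$.

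The paper obtains $\dim Q_a=1$ by proper base change:
\begin{itemize}
\item using Lemma \ref{lemma_section} again, the fiber of $\kappa$ over a regular unipotent $x$ is an $\mathbb{A}^{n-1}$-torsor over $S_a\cap\Unip_r$;
\item rank one coefficients do not change $\chi_c$;
\item $\chi_c(S_a\cap\Unip_r)=1$ by \cite{kim};
\item perversity puts the stalk in a single degree.
\end{itemize}

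Finally, a weight count only shows that $Q_a$ is one-dimensional of weight $2m_a$. For the complex Hodge modules used here, this allows $Q_a\cong\C^{p,q}$ with $p\neq q$. You must also prove $\ol{Q_a}\cong Q_a$. The paper does this by combining complex conjugation, which gives $\ol{\nabla_{c^a,\theta}}\cong\nabla_{c^a,\theta^{-1}}$ and $\ol{\Nbb_a}\cong\Nbb_{n-a}$, with the anti-automorphism $g\mapsto\sigma(g)^{-1}$ ($\sigma$ the Dynkin involution), which carries both back.
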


Recall that $U_a=U\cap c^a\cdot U_-$. Let $S_a=U_{c^a}c^a$.
\begin{lem}\label{lemma_section}
When $a$ is coprime to $n$, the map $B/Z(G) \times S_a \to Bc^a B$, $(b,x)\mapsto b\cdot x$ is an isomorphism.
\end{lem}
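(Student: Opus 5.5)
The plan is to build the inverse of $(b,x)\mapsto b\cdot x=bxb^{-1}$ directly, using the Bruhat parametrization $U_{c^a}\times T\times U\xrightarrow{\sim} Bc^aB$, $(u,t,u')\mapsto uc^atu'$. Write $w=c^a$. The key input is that, since $\gcd(a,n)=1$, the permutation $w$ is an $n$-cycle. Hence $w$ is elliptic: it has no nonzero fixed vectors in $\tfrak$, and $T^w=Z(G)$ for $G=\SL_n$.

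First I check that both sides have dimension $\dim B+\ell(w)$, because $Z(G)$ is finite. The map is $B$-equivariant, with $B$ acting on $B/Z(G)\times S_a$ by left multiplication on the first factor. So it suffices to prove the following claim: every $y\in Bc^aB$ is conjugate under $B$ to an element of $S_a$, by some $b$ that is unique modulo $Z(G)$. Given the claim, the map is a bijective morphism onto the smooth (hence normal) variety $Bc^aB$, and Zariski's main theorem gives an isomorphism. Alternatively, I would write the inverse $y\mapsto(b(y),b(y)^{-1}\cdot y)$ as an explicit morphism, which the inductive construction below makes possible.

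\emph{Torus part.} For $s\in T$ one has $s\,uwt\,s^{-1}=(sus^{-1})\,w\,(w^{-1}sw)\,t\,s^{-1}$. The map $T\to T$, $s\mapsto (w^{-1}sw)s^{-1}$, has kernel $T^w=Z(G)$. It is therefore an isogeny, hence surjective. So the $T$-component can be normalized to $1$, and the torus element doing this is unique modulo $Z(G)$. This reduces the claim to: every $y=uwu'$ with $u\in U_w$, $u'\in U$ is $U$-conjugate to a unique element of $U_ww$.

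\emph{Unipotent part (main obstacle).} I would conjugate by $v\in U$ and track $vuwu'v^{-1}=vu\,w\,(w^{-1}vw)^{\,\,}\!\cdot(w^{-1}v^{-1}w)u'v^{-1}$, then split $w^{-1}vw$ into its $U$- and $U_-$-parts according to the decomposition $U=U_w U^w$. I would filter $U$ by root height and solve for $v$ one height at a time. The equation to solve in the associated graded is a Lang-type linear map on root spaces, of the form $v\mapsto v-\Ad(w^{-1})v$ restricted to the appropriate roots, with the $U^w$-directions chained along $w$-orbits of roots. This map should be injective, and hence bijective, precisely because every $w$-orbit of positive roots eventually leaves the positive system. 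That in turn follows from the fact that $w$ is an $n$-cycle: no root is fixed by any power $w^k$ with $0<k<n$ in a way that stays inside $U^w$.

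This graded computation, and the bookkeeping that shows the successive corrections stay in $U$ and terminate, is the step I expect to be hardest. If the direct induction becomes unwieldy, I would fall back on two facts. First, the $B$-stabilizer of each $x\in S_a$ is $Z(G)$; to see this, reduce to its semisimple part and use ellipticity of $w$. Second, $B$-orbits in $Bc^aB$ have dimension $\dim B$, so each orbit is open in a fiber of the map $Bc^aB\to B\backslash Bc^aB$. Combining these with a transversality check at the tangent level — the differential of the map is an isomorphism at $(1,x)$ since $\bfrak/\zfrak\oplus T_xS_a\to T_x(Bc^aB)$ is injective by the same graded computation — would give that the map is étale and bijective.
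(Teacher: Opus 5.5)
\textbf{Torus step.} This part is correct and matches the paper. Since $c^a$ is an $n$-cycle, $t\mapsto (c^a)^{-1}tc^at^{-1}$ induces $T/Z(G)\xrightarrow{\sim}T$. Together with Bruhat coordinates this gives $T/Z(G)\times Uc^aU\xrightarrow{\sim}Bc^aB$.

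\textbf{The gap: the unipotent step.} The real content of the lemma is that $U\times S_a\to Uc^aU$, $(u,x)\mapsto uxu^{-1}$, is an isomorphism. You leave this as a sketch, and the sketch does not work as stated.
\begin{itemize}
\item Conjugation by $w$ does not respect root height: it can send a root of large height to one of small height, or to a negative root. So the height filtration gives no well-defined associated graded equation that can be solved one height at a time.
\item Your root-chain injectivity argument only shows that the linearization of the problem at $w$ is invertible.
\item The only property of $w$ you use is ellipticity, i.e.\ that every $w$-orbit of positive roots leaves $\Phi^+$. No available cross-section theorem runs on ellipticity alone. Steinberg's theorem (Coxeter elements) and its generalization by He--Lusztig need $w$ elliptic of minimal length in its conjugacy class. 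That fails for $c^a$ when $1<a<n-1$, since $\ell(c^a)=a(n-a)>n-1$.
\item \cite[Theorem 1.5]{NTY}, which the paper uses, needs, besides $\Phi(w)=\varnothing$, the quasi-convexity inequality $n_w(\alpha+\beta)\le\max\{n_w(\alpha),n_w(\beta)\}$ on sign-change times. This inequality is what makes a filtration of $U$ by sign-change time compatible with commutators. It is what keeps the successive corrections in the right subgroups, so the induction closes up.
\end{itemize}
The missing idea is to verify quasi-convexity for $c^a$, as the paper does. One has $(c^a)^m\alpha_{ij}<0$ exactly when $i\le q_m<j$ with $q_m\equiv -am \pmod n$. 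Hence $n_w(\alpha_{ik})=\min\{n_w(\alpha_{ij}),n_w(\alpha_{jk})\}$ for $i<j<k$. Then invoke, or reprove, the cross-section theorem for quasi-convex elements.

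\textbf{The fallback has the same hole.}
\begin{itemize}
\item ``Reduce to the semisimple part and use ellipticity'' gives nothing for unipotent $x\in S_a$. Such $x$ exist; the paper later uses $S_a\cap\Unip_r\neq\varnothing$. For them the semisimple part is $1$, and proving $C_B(x)=Z(G)$ is essentially as hard as the lemma itself.
\item The ``same graded computation'' only handles the differential at $x=c^a$. At a general $x=uc^a\in S_a$ one has $\Ad(x)=\Ad(u)\Ad(c^a)$, and the same filtration problem reappears.
\item An injective \'etale morphism is only an open immersion, so surjectivity would still have to be proved.
\end{itemize}
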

\begin{proof}
To apply \cite[Theorem 1.5]{NTY}, we first check the quasi-convexity condition, which we recall. Let $\Phi^\pm(w):=\{\alpha\in \Phi^\pm, w^m\alpha\in \Phi^\pm\text{ for all }m\in \Z\}$ and $\Phi(w)=\Phi^+(w)\sqcup \Phi^-(w)$. For a positive root $\alpha$, define the sign change time by $n_w(\alpha)=\min\{m\geq 1, w^m\alpha\in \Phi^-\}$. Then { $w$ is called {\it quasi-convex} if} $\Phi(w)$ is empty and \begin{equation}
    \label{eq:inequality} n_w(\alpha+\beta)\leq \max\{n_w(\alpha),n_w(\beta)\}
\end{equation}
for $\alpha,\beta\in \Phi^+$ such that $\alpha+\beta$ is a root. For $w=c^a$ and a positive root {$\alpha_{ij}=\epsilon_i-\epsilon_j$} for $i<j$, as well as $1\leq m<n$, let $q_m$ be an integer in $[1,n-1]$ such that $q_m\equiv -am$ modulo $n.$ Then $w^m\alpha_{ij}$ becomes negative exactly when $i\leq q_m< j$. Since $a$ is coprime to $n$, we see that $\Phi(w)$ is empty. Moreover, for $i<j<k$, the first sign change time of $[i,k)$ is the minimum of those for $[i,j)$ and $[j,k)$ and hence (\ref{eq:inequality}) holds. Therefore, \cite[Theorem 1.5]{NTY} implies that the map
\begin{align}
    \label{iso: U Sa} U\times S_a\xrightarrow{\sim}Uc^aU,\ (u,x)\mapsto u\cdot x
\end{align}is an isomorphism.

Since $c^a$ is an $n$-cycle, the map $\delta: T/Z(G)\to T$, $t\mapsto(c^a)^{-1}tc^a t^{-1}$ is an isomorphism. Moreover, we have the Bruhat coordinate
\[
    U_w\times T\times U\xrightarrow{\sim} BwB,\  (u,h,v)\mapsto uwhv.
\]
As a result, the map
\begin{align}
    \label{iso: T Sa} T/Z(G)\times Uc^aU\xrightarrow{\sim}Bc^aB,\ (t,x)\mapsto t\cdot x
\end{align}
is also an isomorphism with an inverse map
\[
    uc^ahv\mapsto (\ol{t},(\ol{t}^{-1}\cdot u)c^a(\ol{t}^{-1}\cdot v) ,\text{ where } \ol{t}=\delta^{-1}(h).
\]
Composing \eqref{iso: U Sa} and \eqref{iso: T Sa}, we obtain the desired isomorphism. 
\end{proof}
\begin{lem}\label{lem: coxeter_cuspidal}
Let $\theta=e^{2\pi\rho\sqrt{-1}/n}$ and let $a$ be coprime to $n$. Then forgetting the Hodge module structure, $\CH(\nabla_{c^a,\theta})$ is a direct sum of shifts of copies of $\bigoplus_{z\in Z_G}z\Nbb_a$.
\end{lem}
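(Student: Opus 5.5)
We compute $\CH(\nabla_{c^a,\theta})=p_\dagger a^\dagger\nabla_{c^a,\theta}$ directly on the level of $D$-modules, using Lemma \ref{lemma_section} to control the support and the local systems that appear.

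\emph{Step 1: reduce to a computation on the Bruhat cell.} By Corollary \ref{coro:gr length 0}, $\nabla_{c^a,\theta}$ is the clean extension of the rank one monodromic local system on the orbit of $(B,c^aB)$. Unwinding the horocycle correspondence \eqref{diagram_horocycle}, we have
\[
    a^{-1}(\text{orbit})=\{(g,h)\in G\times\Bcal\mid g\in {}^hBc^aB\}\cong G\times^B(Bc^aB).
\]
Hence $\CH(\nabla_{c^a,\theta})$ is the $!$-pushforward (equivalently the $\dagger$-pushforward) along the map $G\times^B Bc^aB\to G$, $(g,x)\mapsto gxg^{-1}$, of the pulled-back local system $a^\dagger\Lbb$. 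Since the extension is clean, it is enough to analyze this map together with the local system on $G\times^B Bc^aB$.

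\emph{Step 2: use Lemma \ref{lemma_section}.} The isomorphism $B/Z(G)\times S_a\cong Bc^aB$ gives
\[
    G\times^B Bc^aB\cong (G/Z(G))\times S_a.
\]
Under this identification, the map to $G$ becomes $(g,x)\mapsto gxg^{-1}$ with $x\in S_a=U_{c^a}c^a$.

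For $a$ coprime to $n$, the element $c^a$ is an $n$-cycle. So every element of $U_{c^a}c^a$ is regular, and its semisimple part is conjugate to $c^a$ times a central element. More precisely, we will show that $S_a$ is a Steinberg-type cross-section: its elements are conjugate to $z\cdot u$ with $z\in Z_G$ and $u$ regular unipotent, after absorbing the semisimple part. Here the characteristic polynomial of an element of $S_a$ is $t^n-(-1)^{n}\det$ type, with coefficients determined by the coordinates on $U_{c^a}$. Consequently the map $G/Z\times S_a\to G$ has image in the union of the regular classes with a fixed characteristic polynomial-type invariant.

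Restricted over $\bigsqcup_z z\Unip_r$, the map should be a fibration with contractible (affine space) fibers. Each piece has dimension $\dim G-\ell(c^a)$ plus the quotient, which matches $\dim\Unip_r+\dim S_a-\dim$ of the relevant fibre. The parts of $S_a$ landing on non-unipotent-times-central elements need to be shown to contribute zero: on them the monodromic twist $\theta$ produces a nontrivial character along torus-orbit fibers, so their cohomology vanishes.

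\emph{Step 3: identify the local system and conclude.} On $z\Unip_r$, the pushforward is then a shift of a rank one $G$-equivariant local system. We identify its monodromy by comparing the $Z(G)\cong\pi_1(\Unip_r)$-action with the monodromy $\theta=e^{2\pi i\rho/n}$ evaluated on the appropriate cocharacter; this gives the character $e^{2\pi i a/n}$, hence $\Kbb_a$. Cleanness of $\Nbb_a$ \cite[(17.8.4)]{Lusztig_character_sheaves_IV} then identifies the extension with $z\Nbb_a$, and translation by $Z_G$ gives the direct sum. Alternatively, we can argue that $\CH(\nabla)$ lies in $\CS_\theta$ with support in $Z_G\cdot\Unip$ and central character forcing the cuspidal $\Nbb_a$, since $\Nbb_a$ is the unique character sheaf in that block.

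\emph{Main obstacle.} The hardest step is Step 2: showing that non-unipotent parts of $S_a$ contribute nothing, and pinning down the fibration structure. If the direct geometric approach stalls, the fallback is the block/central-character argument. That argument uses that $\CH$ lands in $\CS_\theta$, that $\theta$ is regular (so $W_\theta$-monodromic character sheaves are cuspidal-type), and that Lusztig's classification shows the only simple objects in $\CS_\theta$ for this $\theta$ are the $z\Nbb_a$ and their variants. The $a$-dependence is then fixed by the central character computed from $c^a$.
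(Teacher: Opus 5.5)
There is a genuine gap, and it sits where you locate the main obstacle. Nothing in the proposal shows that $K=\CH(\nabla_{c^a,\theta})$ vanishes away from $\bigsqcup_{z\in Z_G}z\Unip_r$, and the geometry in Step 2 that is meant to make this tractable is wrong.

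\begin{itemize}
\item Elements of $S_a=U_{c^a}c^a$ do not have semisimple part conjugate to $c^a$ times a central element. The image of $G/Z_G\times S_a\to G$ does not lie in a locus of fixed characteristic polynomial either. Already for $a=1$, $S_1$ is the companion-matrix (Steinberg) cross-section, so the map is onto the entire regular locus of $G$.
\item Over $x\in\Unip_r$ the fibre is an $\mathbb{A}^{n-1}$-torsor over $S_a\cap\Unip_r$. For $1<a<n-1$ this has dimension at least $(a-1)(n-1-a)>0$, and nothing shows it is contractible. Only $\chi_c(S_a\cap\Unip_r)=1$ is available, and the paper uses that later, in the proof of Theorem \ref{thm: coxeter_cuspidal}, for the multiplicity. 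So the rank-one claim of Step 3 is unsupported.
\item The ``nontrivial character along torus orbits'' remark is a heuristic. As framed, it does not even cover points $zu$ with $u$ non-regular unipotent.
\item The fallback does not repair this. $W_\theta=\langle c\rangle\cong\Z/n$ is nontrivial, which is why $\nabla_{c^a,\theta}$ exists at all. Also, $\CS_\theta$ contains simple objects of full support, for example constituents of $\CH(\nabla_{1,\theta})$, the sheaf induced from the Kummer local system on $T$. So the $z\Nbb_a$ are not the only simple objects, and the fallback tacitly assumes the support statement that is in question.
\end{itemize}

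The missing idea is that your central-character remark already gives the support statement, with no analysis of $S_a$, once you apply it at every point of $G$ rather than only on $Z_G\cdot\Unip$.
\begin{itemize}
\item The local system $\Fbb_a$ on $G/Z_G\times S_a$ is pulled back from the cover $G\to G/Z_G$ via $\chi_a\colon z\mapsto z^a$. Hence $Z_G$ acts on every stalk of the $G$-equivariant object $K$ through the faithful character $\chi_a$.
\item At any $x\in G$, $C_G(x)$ acts on the stalk through $\pi_0(C_G(x))\cong\mu_{d(x)}$, where $d(x)$ is the gcd of the Jordan block sizes, and $Z_G=\mu_n\twoheadrightarrow\mu_{d(x)}$.
\item A nonzero stalk therefore forces $\chi_a$ to factor through $\mu_{d(x)}$, i.e.\ $d(x)=n$, i.e.\ $x\in\bigsqcup_z z\Unip_r$.
\end{itemize}
Your torus-orbit heuristic is the fibrewise form of this argument. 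To make it work you need exactly that $\chi_a$ is nontrivial on $Z_G\cap C_G(x)^\circ=\ker(\mu_n\to\mu_{d(x)})$.

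You also need a semisimplicity input to get a direct sum of shifts. The paper uses purity of the weight-zero $\Fbb_a$, properness of $f$ and the decomposition theorem. Then every simple summand is an intermediate extension from some $z\Unip_r$ with central character $\chi_a$, hence is $z\Nbb_a$. Alternatively, $C_G(x)=Z_G\times(\text{connected unipotent})$ for $x\in\Unip_r$, so $G$-equivariant complexes on $z\Unip_r$ split into shifted local systems, and cleanness of $\Nbb_a$ finishes.

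Finally, equal shifts and multiplicities for the various $z$ follow from the $Z_G$-equivariance of the horocycle correspondence together with $z\nabla_{c^a,\theta}\cong\nabla_{c^a,\theta}$.
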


\begin{proof}
{  Recall  the horocycle correspondence
	\begin{equation}
    \label{eq:horocycle} G\xleftarrow{f}G\times^B G\xrightarrow{q} G\times^B\wt\Bcal.
\end{equation}}
Put $w=c^a$ and $K:=\CH(\nabla_{w,\theta})=f_\dagger q^\dagger\nabla_{w,\theta}$. Under the identification
	\[
    G\times^B G\xrightarrow{\sim}G\times\Bcal,\qquad [g,h]\longmapsto(ghg^{-1},gB),
\]
{  the map $f$ becomes  $f([g,h])=ghg^{-1}$.} By Lemma \ref{lemma_section},
	\[
    G\times^B(BwB)\cong G/Z_G\times S_a.
\]

{ Using Corollary \ref{coro:gr length 0} and the smoothness of $q$, we see that} $q^\dagger\nabla_{w,\theta}$ is the $!$-, $\dagger$-, and intermediate extension of an integrable connection $\Fbb_a$ on $G/Z_G\times S_a$. Moreover, $\Fbb_a$ is pulled back from the cover $G\to G/Z_G$ through the character $\chi_a$ of $\mu_n$, $z\mapsto z^a$. Equip $\Fbb_a$ with its weight-zero polarizable Hodge structure. Its intermediate extension is pure, and $f$ is proper. Hence the decomposition theorem expresses $K$ as a direct sum of shifts of simple equivariant pure Hodge modules.

The center acts on every stalk of $K$ through $\chi_a$. For $x\in G$, let $d(x)$ be the gcd of its Jordan-block sizes. Then $\pi_0(C_G(x))\cong\mu_{d(x)}$ and $\mu_n=Z_G\twoheadrightarrow\mu_{d(x)}.$ The stabilizer action on a stalk factors through $\pi_0(C_G(x))$. Therefore a nonzero stalk forces $\chi_a$ to factor through $\mu_{d(x)}$. Since $\chi_a$ has order $n$, this is possible only when $d(x)=n$, i.e when $x\in \cup_{Z_G} z\Unip_r$.

It follows that every simple summand of $K$ is the intermediate extension of an irreducible equivariant integrable connection on one of the orbits $z\Unip_r$. For $x\in z\Unip_r$, $\pi_0(C_G(x))=Z_G=\mu_n.$ Thus the central character determines the integrable connection: after forgetting Hodge structures, it is $z\Kbb_a$, and its intermediate extension is $z\Nbb_a$.

Finally, the $T$-action on $G\times^B\wt{\Bcal}$ restricts to a $Z_G$ action, which lifts to a $Z_G$-action on $G\times^B G$. With respect to these actions and the translation $Z_G$-action on $G$, the maps $q$ and $f$ in \eqref{eq:horocycle} are $Z_G$-equivariant. Hence $\CH$ intertwines these two actions. Since $\nabla_{c^a,\theta}$ is $\theta$-monodromic,  {$z\nabla_{c^a,\theta}\cong \nabla_{c^a,\theta}$}. It follows that $\CH(\nabla_{c^a,\theta})$ is invariant under central translation and every summand $z\mathbb N_a$ occurs with the same shifts and multiplicities.
\end{proof}
\newcommand{\DR}{\mathrm{DR}}
\begin{proof}[Proof of Theorem \ref{thm: coxeter_cuspidal}]
We first show that $\CH(\nabla_{c^a,\theta})$ is concentrated in cohomological degree $0$. Use $\CH=f_\dagger q^\dagger$ from the proof of Lemma \ref{lem: coxeter_cuspidal}. Consider the composition $\kappa: G\times^B(Bc^aB)\xrightarrow{j} G\times^B G \xrightarrow{f} G$. We have the clean integrable connection $\Fbb_a$ on $G\times^B(Bc^aB)$ such that $\CH(\nabla_{c^a,\theta})=\kappa_\dagger \Fbb_a=\kappa_! \Fbb_a$, since $f$ is proper. As $\kappa$ is affine, we therefore have that $\CH(\nabla_{c^a,\theta})\in D^{\leq 0}(G)\cap D^{\geq 0}(G),$ {  where $D^{\leq 0}(G), D^{\geq 0}(G)$ stand for the nonpositive and nonnegative parts of the usual t-structure on the $G$-equivariant derived category of D-modules on $G$.}

Thus Lemma \ref{lem: coxeter_cuspidal} gives
    \begin{align}
    \label{eqn: Qa} \CH(\nabla_{c^a,\theta}) \cong \Bigl(\bigoplus_{z\in Z_G}z\Nbb_a\Bigr)\otimes Q_a
\end{align}
for a finite-dimensional Hodge structure $Q_a$. We determine $Q_a$ from the stalk at a regular unipotent element $x$.

{Thanks to Lemma \ref{lemma_section} $G\times^B (Bc^aB)\xrightarrow{\sim} G/Z_G\times S_a$.} So, the fiber of $\kappa$ at $x$ is
    \[
    X_x:=\{(gZ_G,s)\in G/Z_G\times S_a\mid gsg^{-1}=x\}.
\]
Projection to the second coordinate $\pi_x:X_x\rightarrow S_a\cap\Unip_r$ defines a torsor with fiber being $C_G(x)/Z_G\cong\mathbb A^{n-1}.$

Let $i_x:\{x\}\to G$ be the embedding. Write $\DR$ for the de Rham functor and let $L$ be the rank-$1$ local system underlying $\Fbb_a$. Proper base change gives
    \begin{align}
    \label{eqn: fiber CH nabla} i_x^*\DR\CH(\nabla_{c^a,\theta}) \cong R\Gamma_c(X_x,L|_{X_x})[\dim(G\times^B(Bc^aB))],
\end{align}
where $i_x^*$ denotes the constructible inverse image. Given \eqref{eqn: Qa}, the left hand side of \eqref{eqn: fiber CH nabla} can be identified with $Q_a[\dim \Unip_r]$

Let $\chi_c$ denote the compactly-supported Euler characteristic. Then by \cite[Proposition 2.5.4(ii)]{Dimca}, $\chi_c(X_x,L|_{X_x})=\chi_c(X_x,\C)$. Moreover, by \cite[Theorem 3.2]{kim}, $\chi_c(S_a\cap\Unip_r,\C_{S_a\cap\Unip_r})=1$.  Thus $\dim(Q_a)=1$. Now the weight of $\Nbb_a$ is $\dim(\Unip_r)$ and the weight of $j_\dagger \Fbb_a$ is $\dim(G\times^B(Bc^aB))$. Therefore the weight of $Q_a$ is
\[
    \dim(G\times^B(Bc^aB)-\dim(\Unip)=(\dim(G)+a(n-a))-(\dim(G)-(n-1))=2m_a.
\]

For a Hodge module $M$, let $\ol{M}$ denote its complex conjugation, as defined in \cite[13.4.c]{SabbahSchnell}. We shall show that $\ol{Q_a}\cong Q_a$, which would imply that $Q_a\cong \C(-m_a)$. For a VHS, taking conjugation preserves the Hodge type conjugates the monodromy. Moreover, taking conjugation commutes with $!$ and $\dagger$ extensions. Therefore, $\ol{\nabla_{c^a,\theta}}\cong \nabla_{c^a,\theta^{-1}}$, $\ol{\Nbb_a}\cong \Nbb_{n-a}$. Hence
\[
    \CH(\nabla_{c^a,\theta^{-1}}) \cong \Bigl(\bigoplus_{z\in Z_G}z\Nbb_{n-a}\Bigr)\otimes \ol{Q_a}
\]

Let $\sigma:G\to G$ be the group automorphism induced by the Dynkin diagram involution preserving $B$ and $T$: {$g\mapsto J(g^{-1})^TJ^{-1}$ where $J$ is the anti-diagonal of all $1$'s.} Then $\sigma(c)=c^{-1}$ and $\sigma(z)=z^{-1}$ for $z\in Z(G)$. Moreover, $\sigma$ acts on $X^*(T)$ by $-w_0$ where $w_0$ is the longest element. Hence $\sigma$ fixes $\rho$, $\theta$ and $\theta^{-1}$. Consider the anti-automorphism $G\to G$, $g\mapsto \sigma(g)^{-1}$, which induces automorphisms of $Y$ and $G\times \Bcal$. Denote that one of $Y$ by $\wt{\tau}$. Then by base change,
\[
    \CH\circ \wt{\tau}^\dagger\cong \tau^\dagger \circ \CH.
\] Applying this to $ \CH(\nabla_{c^a,\theta^{-1}})$, noting $\wt{\tau}^\dagger(\nabla_{c^a,\theta^{-1}})\cong \nabla_{c^a,\theta}$ and ${\tau}^\dagger(\Nbb_a)\cong \Nbb_{n-a}$, we obtain
\[
    \Bigl(\bigoplus_{z\in Z_G}z\Nbb_a\Bigr)\otimes Q_a\cong \CH\circ \wt{\tau}^\dagger(\nabla_{c^a,\theta^{-1}})\cong \tau^\dagger \circ \CH(\nabla_{c^a,\theta^{-1}})\cong \Bigl(\bigoplus_{z\in Z_G}z\Nbb_{a}\Bigr)\otimes \ol{Q_a}.
\]
Consequently $\ol{Q_a}\cong Q_a$ as Hodge structures and the theorem follows.
\end{proof}
\begin{coro}
\label{coro: main truncated comparison}
Let $a$ be coprime to $n$, take $\lambda=\rho/n$ and $\theta=e^{2\pi\rho\sqrt{-1}/n}$. Then in $\Coh_{\Nilp,G\times\C^\times}(\Cfrak)$,
\[
    \wt\CH\bigl(\gr_{\Hcal,\lambda}(\nabla_{c^a,\theta})\bigr) \cong \gr_{\mathcal{CS}}\left( \bigoplus_{z\in Z_G}z\Nbb_a(-m_a) \right).
\]
\end{coro}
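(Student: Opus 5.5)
This is essentially a formal consequence of two earlier results: the commutative square of Theorem \ref{thm: grCH} and the Hodge-module identification of Theorem \ref{thm: coxeter_cuspidal}. The plan is to chain them together.

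\emph{Step 1.} Apply Theorem \ref{thm: grCH} to the object $M=\nabla_{c^a,\theta}\in\Hcal_\theta$ with $\lambda=\rho/n$, so that $e^{2\pi i\lambda}=\theta$. Commutativity of diagram \eqref{diagram_grCH} gives a natural isomorphism
\[
\wt\CH\bigl(\gr_{\Hcal,\lambda}(\nabla_{c^a,\theta})\bigr)\cong \gr_{\CS}\bigl(\CH(\nabla_{c^a,\theta})\bigr).
\]
Here I would check that we are in the setting of that theorem: $G=\SL_n$, and $\theta$ is a finite-order character, so the finite cover $\wt T\to T$ exists and $\gr_{\Hcal,\lambda}$ is defined. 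By Corollary \ref{coro:gr length 0}, $\nabla_{c^a,\theta}$ lies in the heart, so $\CH(\nabla_{c^a,\theta})$ lies in $\CS_\theta$ by definition.

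\emph{Step 2.} Substitute the isomorphism of Hodge modules from Theorem \ref{thm: coxeter_cuspidal},
\[
\CH(\nabla_{c^a,\theta})\cong\bigoplus_{z\in Z_G}z\Nbb_a(-m_a).
\]
The functor $\gr_\CS$ is defined on the category of complexes of mixed Hodge modules, so it respects isomorphisms there. This is why the theorem must be used as an isomorphism of Hodge modules and not merely of $D$-modules: the Tate twist $(-m_a)$ shifts the Hodge filtration, and so it changes the $\C^\times$-grading of $\gr_\CS$. Combining with Step 1 yields the claimed isomorphism in $\Coh_{\Nilp,G\times\C^\times}(\Cfrak)$. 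Nilpotent support is automatic, by the first part of Theorem \ref{thm: grCH}.

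\emph{Main obstacle.} The only delicate point is bookkeeping of conventions. First, $\wt\CH$ in Definition \ref{defn: CH} carries the twist $\langle -b\rangle$ and the canonical bundle $\omega_\Bcal$; these must match the normalization used in the proof of Theorem \ref{thm: grCH}, and I would simply invoke that theorem rather than recheck it. Second, the remark after Theorem \ref{thm: grCH} shows that the answer does not depend on the integral shift of $\lambda$, so the choice $\lambda=\rho/n$ is harmless even though $\rho/n$ is only rational.
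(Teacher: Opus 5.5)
Your proposal is correct and is the paper's argument: the corollary is stated without a separate proof precisely because it follows by applying the commutative square of Theorem \ref{thm: grCH} to $\nabla_{c^a,\theta}$ and then substituting the Hodge-module isomorphism of Theorem \ref{thm: coxeter_cuspidal}. You are also right that the Hodge-level statement, including the twist $(-m_a)$, is needed to pin down the $\C^\times$-grading of the right-hand side.
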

Corollary \ref{coro: gr nabla_theta} identifies the left hand side as $\wt\CH\left((i_{c^a})_*\Ocal_{c^a}(-\omega_a+\rho,\rho)\right)\langle b-a(n-a)\rangle.$
\subsection{The $\GL_n$ lift}\label{SS:GL lift}
Firstly, for $\GL_n$, the length-zero subgroup is $\Z$ generated by $\varpi:=t_{\omega_1}c$ with
\begin{align}
    \label{eq: pi n central} \varpi^{\,n}=t_{\zeta},\qquad \zeta:=(1,1,\dots,1)\in X^*(T).
\end{align}
We first make the descent from $\SL_n$ explicit. Keep $\lambda=\rho/n$. Put
\[
    \mathcal K_{a,n}^{\SL}:=\gr_{\Hcal,\lambda}(\nabla_{c^a,\theta}),\qquad \ol{\Nbb}_{a,n}^{\SL}:= \bigoplus_{z\in\mu_n}z\Nbb_a.
\]
Consider the central isogeny $\pi:\SL_n\times\C^\times\longrightarrow\GL_n,\ (g,t)\longmapsto tg$ whose kernel is the antidiagonal $\mu_n$. Take $\ol{\Zcal}^R:= \wt{\Ncal}\times^R_{\gl_n}\wt{\Ncal}$. Under the decomposition $\mathfrak{gl}_n=\mathfrak{sl}_n\oplus\mathfrak z$, we have
\[
    \ol{\Zcal}^R=\Zcal^R\times \Spec(\Sym(\mathfrak z^*[1])).
\]
Let $q:\Zcal^R\to \ol{\Zcal}^R$ denote the map induced by the augmentation $\Sym(\mathfrak z^*[1])\to \C$.

Write $d=a+kn$ with $1\leq a\leq n-1$ and $a$ coprime to $n$ when $n>1$ and set $a=0$ when $n=1$. Define
\[
    \mathcal K_{d,n}:=q_*\Kcal_a^{\SL}\otimes\C_{\det^k},\qquad \ol{\Nbb}_{d,n}:=\pi_*\left(\ol{\Nbb}_{a,n}^{\SL}\boxtimes \Ocal_{\C^\times}\right)^{\chi_a}\otimes\C_{\det^k}.
\]
On $\mathcal K^\SL_{a,n}$, we define a scaling $\C^\times$-equivariance by letting $t\in\C^\times$ act on local sections as multiplication by $t^a$. Then the $\SL_n\times \C^\times$-equivariance on $\Kcal_{d,n}$ descends along $\pi$ to a $\GL_n$-equivariant structure. Similar for $\ol{\Nbb}_{d,n}$. Corollary \ref{coro: main truncated comparison} plus $\mu_n$-equivariance implies the following:
\begin{coro}\label{prop: GL lift}
In $\Coh_{\Nilp,\GL_n\times\C^\times}(\Cfrak_n)$ there is an isomorphism
\[
    \wt\CH_n(\mathcal K_{d,n}) \cong \gr_{\mathcal{CS}}\left( \ol{\Nbb}_{d,n}(-m_a) \right).
\]
\end{coro}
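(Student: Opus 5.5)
The plan is to deduce the statement from Corollary \ref{coro: main truncated comparison} by equivariant descent along the central isogeny $\pi:\SL_n\times\C^\times\to\GL_n$, keeping track of the extra derived direction $\Spec(\Sym(\mathfrak z^*[1]))$ and the central character. First, twisting by $\C_{\det^k}$ commutes with both $\wt\CH_n$ and $\gr_{\CS}$: it is pulled back from $\GL_n$, it is trivial along the fibers of $r$ and $\varpi$ up to the equivariant structure, and the projection formula applies. So we may assume $k=0$, $d=a$. When $n=1$ everything is a line bundle on $\C^\times\times\C$ (resp.\ a shifted point) and the statement can be checked by hand. We therefore take $n>1$ with $a$ coprime to $n$.

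\textbf{Step 1: compare the geometry over $\GL_n$ and $\SL_n\times\C^\times$.} The correspondence defining $\wt\CH_n$ is built from $\wt{\Xfrak}_n$. Pulling it back along $\pi$ (i.e.\ working $\SL_n\times\C^\times$-equivariantly), $\Cfrak_n$ becomes $\Cfrak_{\SL_n}\times(\C^\times\times\mathfrak z)$, with the derived structure of the $\mathfrak z$-component of the moment map. That component is identically zero, since $g\cdot x-x$ has trace zero. Correspondingly, the $\gl_n$-Steinberg scheme is $\ol{\Zcal}^R=\Zcal^R\times\Spec\Sym(\mathfrak z^*[1])$. Similarly, $\wt{\Xfrak}_n$ decomposes as $\wt{\Xfrak}_{\SL}\times\C^\times\times\mathfrak z$ together with a $\Sym(\mathfrak z^*[1])$ factor. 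I would verify that under these decompositions the maps $r$ and $\varpi$ become products with the identity, except that on the central factor the map to $\ol{\Zcal}^R$ is the augmentation $q$. Base change along $q$ then yields
\[
\wt\CH_n(q_*\Kcal)\simeq \wt\CH_{\SL}(\Kcal)\boxtimes\Ocal_{\C^\times\times\{0\}},
\]
where the right-hand factor is pushed forward from $\C^\times\times\{0\}$ in $\C^\times\times\mathfrak z$. The central $\mathfrak z$ direction is nilpotent-supported at $0$ because $\Cfrak$ only sees nilpotent $x$.

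\textbf{Step 2: the D-module side.} By definition, $\ol{\Nbb}_{a,n}$ is the $\chi_a$-isotypic part of $\pi_*(\ol\Nbb^{\SL}\boxtimes\Ocal_{\C^\times})$. Since $\pi$ is a $\mu_n$-Galois étale cover, $\gr_{\CS}$ commutes with $\pi_*$ and with taking isotypic components; this is Lemma \ref{lemma_gr_pullback} applied to the étale map $\pi$, combined with exactness of isotypic projection. Moreover $\gr(\Ocal_{\C^\times})=\Ocal_{\C^\times\times\{0\}}$. Thus $\gr_{\CS}(\ol\Nbb_{a,n}(-m_a))$ is the $\chi_a$-isotypic descent of $\gr_{\CS}(\ol\Nbb^{\SL}(-m_a))\boxtimes\Ocal_{\C^\times\times 0}$.

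\textbf{Step 3: match equivariance.} Combining Steps 1 and 2 with Corollary \ref{coro: main truncated comparison} gives an $\SL_n\times\C^\times\times\C^\times$-equivariant isomorphism upstairs. It remains to show that it is $\mu_n$-equivariant, for the $\mu_n$-action on $\Kcal^{\SL}$ via $t^a$ and on the $\Nbb$-side via $\chi_a$, so that it descends to $\GL_n$. Both sides have $Z_G=\mu_n$ acting through the same character $\chi_a$: on $\Nbb_a$ this was shown in the proof of Lemma \ref{lem: coxeter_cuspidal}, and on $\nabla_{c^a,\theta}$ it follows from $\theta$-monodromicity via $\lambda=\rho/n$. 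Theorem \ref{thm: grCH} is natural in the $Z_G$-action, because $\CH$ intertwines central translations as noted in the proof of Lemma \ref{lem: coxeter_cuspidal}.

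I expect this to be the main obstacle: making the central-character bookkeeping consistent on the coherent side, where the twist $\C_{-\wt\lambda}$ in $D_\lambda$ shifts the $\mu_n$-character. If needed, I would fix the isomorphism of Corollary \ref{coro: main truncated comparison} by using that the $\chi_a$-isotypic summand is simple. Since $\End(\Nbb_a)=\C$, any two $\mu_n$-structures with the same character differ by a scalar, so the isomorphism can be rescaled to be equivariant.
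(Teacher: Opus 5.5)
Your strategy is the paper's own: its proof is the single remark that Corollary~\ref{coro: main truncated comparison} together with $\mu_n$-equivariance gives the claim, and your Steps 1--2 make the base change along $\pi$ explicit. There are two slips in those steps.

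First, $\wt{\Xfrak}_n$ has no $\mathfrak z$ factor. Since $x\in gh\cdot\nfrak$ is nilpotent, only the odd factor $\mathrm{Spec}\,\Sym(\mathfrak z^*[1])$ appears (coming from the trace part of $g\cdot x-x$). The map $r$ matches it with the corresponding factor of $\ol{\Zcal}^R$, and $\varpi$ is the zero section in the $\mathfrak z$-direction of $\Cfrak_n$. This is why your displayed formula correctly has $\Ocal_{\C^\times\times\{0\}}$ rather than $\Ocal_{\C^\times\times\mathfrak z}$.

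Second, commuting $\gr$ with $\pi_*$ for the finite \'etale $\pi$ is Proposition~\ref{prop:gr_pushforward}, or alternatively Lemma~\ref{lemma_gr_pullback} applied to $\pi^\dagger$ followed by descent. It is not Lemma~\ref{lemma_gr_pullback} applied to a pushforward.

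The step that fails as written is Step 3, which is the only non-formal point and the one the paper leaves implicit. Naturality of Theorem~\ref{thm: grCH} does not make the isomorphism $\phi$ of Corollary~\ref{coro: main truncated comparison} compatible with the deck transformations $(g,t)\mapsto(\zeta g,\zeta^{-1}t)$ of $\pi$. The reason is that the isomorphism of Theorem~\ref{thm: coxeter_cuspidal} was produced abstractly (decomposition theorem plus a stalk count), not equivariantly.

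Your fallback does not repair this, for two reasons.
\begin{itemize}
\item The deck group does not act on $\Nbb_a$ at all: it carries $\Nbb_a$ to $\zeta\Nbb_a$, whose support $\zeta\,\Unip$ is disjoint from $\Unip$. So $\End(\Nbb_a)=\C$ is beside the point, and endomorphisms of the coherent sheaves need not be scalars anyway.
\item If $\alpha,\beta$ denote the deck-equivariant structures on the two sides, multiplying $\phi$ by a global scalar never changes whether $\phi\circ\alpha_\zeta=\beta_\zeta\circ\zeta^*\phi$ holds.
\end{itemize}

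What you should use instead is exactly that disjointness. Both sides are supported over $\bigsqcup_{z\in\mu_n}(z\,\Unip)\times\C^\times\subset \SL_n\times\C^\times$, and the deck group permutes these components simply transitively. Moreover, $\pi$ maps $\Unip\times\C^\times$ isomorphically onto $\C^\times\cdot\Unip$. Hence descent amounts to restricting to a neighbourhood of one component. Equivalently, restrict $\phi$ there and transport it to the other components via $\alpha$ and $\beta$; the cocycle condition makes the result equivariant.

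Compatibility with the kernel $\{(\zeta,\zeta^{-1})\}\subset\SL_n\times\C^\times$ is then automatic. Indeed, $\phi$ is $\SL_n$-equivariant, and the extra $\C^\times$ acts by the scalar $t^a$ on both sides. The central-character bookkeeping you worry about is needed only for $\Kcal_{d,n}$ and $\ol\Nbb_{d,n}$ to be well defined, and the paper builds that into their definitions.
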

\section{Compatibility with parabolic induction}\label{S_pind_compat}
\subsection{Parabolic induction functors}
Here we consider the following situation. Let $G$ be a connected reductive group,  $P\subset G$ be a parabolic subgroup, and $L\subset P$ be a Levi subgroup. Choose a maximal torus $T\subset L$ and let $\theta$ have the same meaning as in Section \ref{SS_Hecke_monodromic}. We can form the following spaces and categories:
\begin{itemize}
\item[(i)] We can consider the derived commuting schemes $\mathfrak{C}_L,\mathfrak{C}_G$ for $L$ and $G$. Similarly define $\Cfrak_P$ to be the derived zero fiber of the  map $P\times \pfrak \to \pfrak, (p,x)\mapsto \operatorname{Ad}(p)x-x$. There is a correspondence
$$\mathfrak{C}_L/L\xleftarrow{\iota} \Cfrak_P/P \xrightarrow{\pi}\mathfrak{C}_G/G,$$
{  where $\iota$ is induced by the projection $P\twoheadrightarrow L$ and $\pi$ is induced by the inclusion $P\hookrightarrow G$.}
This correspondence gives rise to the pull-push functor as follows. 
We consider the functor
$${\pi_*\iota^*}: \Coh_{L\times \C^\times}(\Cfrak_L)\rightarrow
\Coh_{G\times \C^\times}(\Cfrak_G)$$ that restricts to
$$ \Coh_{\mathcal{N},L\times \C^\times}(\Cfrak_L)\rightarrow
\Coh_{\mathcal{N},G\times \C^\times}(\Cfrak_G).$$ We denote this functor by $\,_{\mathcal{C}}\widetilde{\operatorname{Ind}}_P^G$.

Similarly to Section \ref{SSS:affine_CH}, the functor $\,_{\mathcal{C}}\widetilde{\operatorname{Ind}}_P^G$ decomposes as the composition of two functors between categories of coherent sheaves on cotangent bundles introduced in Section \ref{SS_assoc_graded_general}. First we apply the pull-push along
$$T^*(L/\operatorname{Ad}(L))\leftarrow P/\operatorname{Ad}(P)\times_{L/\operatorname{Ad}(L)}T^*(L/\operatorname{Ad}(L))\rightarrow T^*(P/\operatorname{Ad}(P))$$
that is associated to the smooth morphism $P/\operatorname{Ad}(P)\rightarrow L/\operatorname{Ad}(L)$ and then the twisted (by the canonical bundle of $P/\operatorname{Ad}(P)$) pull-push via
$$T^*(P/\operatorname{Ad}(P))\leftarrow P/\operatorname{Ad}(P)\times_{G/\operatorname{Ad}(G)}T^*(G/\operatorname{Ad}(G))\rightarrow T^*(G/\operatorname{Ad}(G)).$$
It is associated to the projective morphism $P/\operatorname{Ad}(P)\rightarrow G/\operatorname{Ad}(G)$.
\item[(i$'$)] Similarly, we have the parabolic induction functor
$$\,_{\mathcal{C}}\operatorname{Ind}_P^G: D^b_{L}(L)\rightarrow D^b_{G}(G),$$
{where we abuse the notation and write $D^b(G)$ for $D^b_G\operatorname{MHM}(G)$, and similarly to $D^b_L(L)$.}
\item[(ii)] We can form the Steinberg derived schemes $\mathcal{Z}_L,\mathcal{Z}_G$, as well as the (derived) affine Hecke categories $\mathcal{AH}_L,\mathcal{AH}_G$. We can view these categories as the dg categories $\operatorname{Coh}_{\C^\times}(T^*(B_L\backslash L/B_L)),\operatorname{Coh}_{\C^\times}(T^*(B\backslash G/B))$, where $B_L:=L\cap B$.
We can then, as in (i), define the composition $\,_{\mathcal{H}}\widetilde{\operatorname{Ind}}_P^G$ of pull-push functors associated to stack morphisms
$$B_L\backslash L/B_L\leftarrow B\backslash P/B\rightarrow B\backslash G/B.$$
\item[(ii$'$)] Similarly to (i$'$), we have the categories $\mathcal{H}_{L,\theta}, \mathcal{H}_{G,\theta}$ and the induction functor $$\,_{\mathcal{H}}\operatorname{Ind}_P^G:\mathcal{H}_{L,\theta}\rightarrow \mathcal{H}_{G,\theta}.$$
\end{itemize}
\begin{rem}
If $Z_G$ has positive dimension (for instance, when $G=\GL_n$), the categories above should be understood within the $\chi$-twisted equivariant framework introduced in Remark \ref{rem:twisted_CH}. The character $\chi:\gfrak\to \C$ restricts naturally along $\mathfrak{l}\subset \pfrak\subset \gfrak$, and the four induction operations preserve this twist; hence, we omit explicit mention of $\chi$.
\end{rem}
\subsection{Compatibility of functors}

\begin{lem}\label{Lem:commuting_cube}
For all $\theta$, the functor $\,_{\mathcal{C}}\operatorname{Ind}_P^G$ sends $\mathcal{CS}_{L,\theta}$ to $\mathcal{CS}_{G,\theta}$. Moreover, the following diagram is commutative, where
\begin{enumerate}
\item the vertical arrows are the functors of taking the Hodge associated graded sheaf;
\item the horizontal arrows are the character functors $\operatorname{CH}$ or $\widetilde{\operatorname{CH}}$;
\item and the slant arrows are parabolic induction functors $\,_?\operatorname{Ind}_P^G$ or
$\,_?\widetilde{\operatorname{Ind}}_P^G$.
\end{enumerate}
\begin{equation}
    \label{eq:commuting_cube} \begin{picture}(130,70) \put(7,57){$\Hcal_{L,\theta} $} \put(7,27){$\AH_L$} \put(67,57){$\CS _{L,\theta}$} \put(47,27){$\operatorname{Coh}_{L\times \C^\times, \Nilp}(\Cfrak_L)$} \put(27,37){$\Hcal_{G,\theta} $} \put(27,7){$\AH_G$} \put(87,37){$\CS _{G,\theta}$} \put(67,7){$\operatorname{Coh}_{G\times \C^\times, \Nilp}(\Cfrak_G)$} \put(16,58){\vector(1,0){50}} \put(36,38){\vector(1,0){50}} \put(16,28){\vector(1,0){30}} \put(36,8){\vector(1,0){30}} \put(11,55){\vector(0,-1){24}} \put(31,35){\vector(0,-1){24}} \put(71,55){\vector(0,-1){24}} \put(91,35){\vector(0,-1){24}} \put(14,55){\vector(1,-1){14}} \put(74,55){\vector(1,-1){14}} \put(14,25){\vector(1,-1){14}} \put(74,25){\vector(1,-1){14}}
    \end{picture}
\end{equation}
\end{lem}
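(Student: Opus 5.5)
The plan is to reduce every face of the cube \eqref{eq:commuting_cube} to formal compatibilities already established. These are Lemma \ref{lemma_gr_pullback} for smooth pullback, Proposition \ref{prop:gr_pushforward} for projective pushforward, Lemma \ref{prop_gr_induction} for induction, and Theorem \ref{thm: grCH} together with derived base change.

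\textbf{Preservation of character sheaves.} We first show that $\,_{\mathcal{C}}\operatorname{Ind}_P^G$ sends $\CS_{L,\theta}$ to $\CS_{G,\theta}$. Since $\CS_{L,\theta}$ is generated under extensions, cohomology and Serre closure by the objects $\Hcal^i\CH_L(\Fcal)$, it suffices to prove the commutativity of the top face,
\[
\,_{\mathcal{C}}\operatorname{Ind}_P^G\circ\CH_L\cong \CH_G\circ\,_{\mathcal{H}}\operatorname{Ind}_P^G .
\]
This is the classical transitivity of the horocycle transform: both sides are pull-push along a single correspondence. One side is $L\leftarrow L\times\Bcal_L\to Y_L$ followed by $L/L\leftarrow P/P\to G/G$. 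The other is $Y_L\leftarrow (\text{stack over } B\backslash P/B)\to Y_G$ followed by $G\leftarrow G\times\Bcal\to Y_G$. Composing via base change, both become pull-push through $P\times^{B}\wt\Bcal$-type spaces over $G\times\Bcal_P$, where $\Bcal_P\subset\Bcal$ is the closed $P$-orbit fibered over $G/P$. We will write the two iterated fiber products explicitly and match them. Because $G/P$ is proper, the pushforwards agree with proper pushforwards. The composite then shows that $\,_{\mathcal{C}}\operatorname{Ind}$ of a generator is a complex with cohomology in $\CS^\heartsuit_{G,\theta}$. Exactness properties, namely that $\CS^\heartsuit$ is a Serre subcategory and the induction is triangulated, extend this to all of $\CS_{L,\theta}$.

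\textbf{The remaining faces.} The front and back faces are Theorem \ref{thm: grCH} for $L$ and for $G$. For the two side faces, we decompose each induction into a smooth pullback followed by a projective pushforward, as in items (i) and (ii). Lemma \ref{lemma_gr_pullback} handles the smooth pullback and Proposition \ref{prop:gr_pushforward} handles the projective pushforward. We apply them equivariantly for $\Ad(P)$ on $P$ and for $B$-double cosets on $P$, using Lemma \ref{prop_gr_induction} to pass between $P$- and $G$-equivariance. The twist by the canonical bundle and the $\langle\cdot\rangle$-shift in $\gamma$ are exactly those built into $\,_{\mathcal{C}}\widetilde{\operatorname{Ind}}_P^G$ and $\,_{\mathcal{H}}\widetilde{\operatorname{Ind}}_P^G$. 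For the Hecke side we also have to check that the twisting $\otimes\C_{-\wt\lambda}$ and the descent $\Desc_T$ commute with induction. This holds because $T\subset L$ is common to both. Finally, the bottom face, $\wt\CH_G\circ\,_{\mathcal H}\widetilde{\operatorname{Ind}}\cong\,_{\mathcal C}\widetilde{\operatorname{Ind}}\circ\wt\CH_L$, follows from the same correspondence computation as the top face, now at the level of the Lagrangian correspondences \eqref{eq:lagrangian_stack} with derived base change.

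\textbf{Main obstacle.} We expect the hardest step to be showing that the derived fiber products in the bottom face agree. Concretely, one must check that the composite Lagrangian correspondences are derived Cartesian, analogous to the transversality computation \eqref{iso: X fiber}, and that the canonical-bundle twists $\omega_{\Bcal}$, $\omega_{\Bcal_L}$ and $\omega_{G/P}$ combine correctly along with the grading shifts $b_L+\dim G/P=b_G$. We will do this fiberwise over a point of $G\times\Bcal$, computing kernels of differentials as in the proof of Theorem \ref{thm: grCH}. Alternatively, we can avoid this computation: the bottom face is forced by the other faces once we know that Hodge-graded objects generate enough, but we prefer the direct geometric check.
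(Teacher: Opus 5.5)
Your treatment of the cube itself follows the paper. The top and bottom faces come from base change through the $3\times3$ diagram of stacks built on $Y_L\leftarrow Y_P\to Y_G$ and $L/\Ad L\leftarrow P/\Ad P\to G/\Ad G$, so that both composites become pull-push along $Y_L\leftarrow P/\Ad B\to G/\Ad G$. The front and back faces are Theorem~\ref{thm: grCH}, and the side faces use the smooth-pullback and projective-pushforward compatibilities, which the paper leaves as an exercise.

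The gap is in the first assertion, that $\,_{\mathcal{C}}\operatorname{Ind}_P^G$ preserves character sheaves. The top face only controls the induction of the complexes $\CH_L(\Fcal)$: it gives $\,_{\mathcal{C}}\operatorname{Ind}_P^G\CH_L(\Fcal)\cong\CH_G(\,_{\mathcal{H}}\operatorname{Ind}_P^G\Fcal)\in\CS_{G,\theta}$. But $\CS^\heartsuit_{L,\theta}$ is the Serre closure of the \emph{cohomology objects} $\Hcal^i\CH_L(\Fcal)$. So you must control the induction of these, and then of arbitrary subobjects and quotients of objects already handled. A triangulated functor only propagates membership in $\CS_{G,\theta}$ along triangles whose other two terms are already known. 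From $0\to M'\to M\to M''\to 0$ in $\CS^\heartsuit_{L,\theta}$ and $\,_{\mathcal{C}}\operatorname{Ind}(M)\in\CS_{G,\theta}$ you learn nothing about $\,_{\mathcal{C}}\operatorname{Ind}(M')$. Likewise you learn nothing about $\,_{\mathcal{C}}\operatorname{Ind}(\Hcal^i\CH_L\Fcal)$ from $\,_{\mathcal{C}}\operatorname{Ind}(\CH_L\Fcal)$. So ``$\CS^\heartsuit$ is Serre and induction is triangulated'' does not give the claim.

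The paper supplies the missing input through the t-exactness of $\,_{\mathcal{C}}\operatorname{Ind}_P^G$ \cite{BYD}. An exact functor commutes with $\Hcal^i$, so it sends $\Hcal^i\CH_L(\Fcal)$ to $\Hcal^i\CH_G(\,_{\mathcal{H}}\operatorname{Ind}_P^G\Fcal)\in\CS^\heartsuit_{G,\theta}$. It also sends subobjects, quotients and extensions in the heart to subobjects, quotients and extensions, hence preserves the Serre closure.

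If you want to avoid t-exactness, you need a semisimplicity argument instead. Objects of $\MHM_L(L)$ have finite length. Since $\CH_L=p_\dagger a^\dagger$ with $p$ proper and $a$ smooth, the decomposition theorem shows that every simple object of $\CS^\heartsuit_{L,\theta}$ is, up to shift, a direct summand of $\CH_L(S)$ for some simple (hence pure) $S\in\Hcal^\heartsuit_{L,\theta}$. Its induction is then a summand of $\CH_G(\,_{\mathcal{H}}\operatorname{Ind}_P^G S)$, and $\CS_{G,\theta}$ is closed under summands. Only after this does triangulatedness finish the job.

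Separately, your fallback for the bottom face does not work. Agreement of two functors on the essential image of $\gr$ does not yield a natural isomorphism on all of $\AH_L$, so you need the direct base-change argument you say you prefer.
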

\begin{proof}
1) Induction vs character functors.

We start with the commutativity of the version of the top square of (\ref{eq:commuting_cube}), where instead of the categories of character sheaves we have $D^b_L(L)$ and $D^b_G(G)$. Consider the following commutative diagram of quotient stacks:
\begin{equation}
    \label{eq:diagram_pull_push} \begin{picture}(80,40) \put(19,2){$Y_L$} \put(12,17){$L/\operatorname{Ad}B_L$} \put(12,32){$L/\operatorname{Ad}L$} \put(47,2){$Y_P$} \put(42,17){$P/\operatorname{Ad}B$} \put(42,32){$P/\operatorname{Ad}P$} \put(76,2){$Y_G$} \put(72,17){$G/\operatorname{Ad}B$} \put(72,32){$G/\operatorname{Ad}G$} \put(20,21){\vector(0,1){10}} \put(20,16){\vector(0,-1){10}} \put(48,21){\vector(0,1){10}} \put(48,16){\vector(0,-1){10}} \put(78,21){\vector(0,1){10}} \put(78,16){\vector(0,-1){10}} \put(45,3){\vector(-1,0){20}} \put(53,3){\vector(1,0){21}} \put(41,18){\vector(-1,0){12}} \put(58,18){\vector(1,0){13}} \put(41,33){\vector(-1,0){14}} \put(58,33){\vector(1,0){13}} \end{picture}
\end{equation}
where $$Y_L=(U_L\backslash L/U_L)/T, Y_P=(U\backslash P/U)/T, Y_G=(U\backslash G/U)/T.$$ We note that
\begin{itemize}
\item[(i)] The composition $\,_{\mathcal{C}}\operatorname{Ind}_P^G\circ \operatorname{CH}_L$ is the composition of the pull-push along the leftmost column followed by the pull-push along the top row.
\item[(ii)] The composition
$\operatorname{CH}_G\circ \,_{\mathcal{H}}\operatorname{Ind}_P^G$ is the composition of the pull-push along the bottom row followed by the pull-push along the right column.
\item[(iii)] The top-left and bottom-right squares in the diagram are Cartesian.
\item[(iv)] The morphisms $Y_P\rightarrow Y_G$ and $L/\operatorname{Ad}B_L\rightarrow L/\operatorname{Ad}L$ are proper.
\end{itemize}
So we can apply proper base change and conclude that the functors in (i) and (ii) are both isomorphic to the pull-push via
$$Y_L\leftarrow P/\operatorname{Ad}B\rightarrow G/\operatorname{Ad}G$$
This shows the commutativity of the top square of (\ref{eq:commuting_cube}) modified as in the beginning of the proof and also that $\,_{\mathcal{C}}\operatorname{Ind}_P^G$ sends objects in the image of $\CH_L$ to $\CS_{G,\theta}$. {For the assertion about the image of $\CS_{L,\theta}$ under $\,_{\mathcal{C}}\operatorname{Ind}_P^G$, by \cite{BYD}, $\,_{\mathcal{C}}\operatorname{Ind}_P^G$ is $t$-exact. Exactness then preserves subquotients and extensions and hence the Serre subcategory $\CS_{L,\theta}^\heartsuit$ and implies that $\,_{\mathcal{C}}\operatorname{Ind}_P^G\CS_{L,\theta}^\heartsuit$ lands in $\CS_{G,\theta}$.}

Now we proceed to the commutativity of the bottom square of (\ref{eq:commuting_cube}). In order to establish this we need the following commutative diagram completely analogous to (\ref{eq:diagram_pull_push}):

    \begin{picture}(80,40)
		\put(2,2){$B_L\backslash L/B_L$}
		\put(2,17){$L/\operatorname{Ad}B_L$}
		\put(2,32){$L/\operatorname{Ad}L$}
		\put(32,2){$B\backslash P/B$}
		\put(32,17){$P/\operatorname{Ad}B$}
		\put(32,32){$P/\operatorname{Ad}P$}
		\put(62,2){$B\backslash G/B$}
		\put(62,17){$G/\operatorname{Ad}B$}
		\put(62,32){$G/\operatorname{Ad}G$}
		\put(10,21){\vector(0,1){10}}
		\put(10,16){\vector(0,-1){10}}
		\put(38,21){\vector(0,1){10}}
		\put(38,16){\vector(0,-1){10}}
		\put(68,21){\vector(0,1){10}}
		\put(68,16){\vector(0,-1){10}}
		\put(31,3){\vector(-1,0){11}}
		\put(46,3){\vector(1,0){15}}
		\put(31,18){\vector(-1,0){12}}
		\put(48,18){\vector(1,0){13}}
		\put(31,33){\vector(-1,0){12}}
		\put(48,33){\vector(1,0){13}}
	\end{picture}

Note that this diagram satisfies analogs of (iii) and (iv) above. So we can apply the proper base change for functors between categories of coherent sheaves on cotangent bundles, cf. Section \ref{SSS:affine_CH}. This yields the commutativity of the bottom square of (\ref{eq:commuting_cube}) following the scheme explained above for the D-module functors, cf. \cite[Lemma 4.6]{BezLosev}.

2) Character vs Hodge associated graded functors: this is the content of Theorem \ref{thm: grCH}.

3) Induction vs Hodge associated graded functors: this is completely analogous to 2) and is left as an exercise.
\end{proof}

\subsection{Monoidal structure}\label{SS_monoidal}
We consider the following situation. Let 
$Q\subset G$ be a parabolic containing $P$ and $M\subset Q$ a Levi subgroup containing $L$. Write $P_M$ for $P\cap M$, this is a parabolic subgroup. Proper base change yields the following isomorphisms of functors
\begin{equation}
    \label{eq:transitivity_iso} \psi_{Q,P}^G:\,_{\mathcal{C}}\widetilde{\operatorname{Ind}}_Q^G\circ \,_{\mathcal{C}}\widetilde{\operatorname{Ind}}_{P_M}^M\xrightarrow{\sim}\,_{\mathcal{C}} \widetilde{\operatorname{Ind}}_P^G.
\end{equation}

Now suppose we are given three embedded parabolic subgroups $P\subset Q\subset R\subset G$ with Levi subgroups $L\subset M\subset K$. The notation like $P_K$ is used for the intersection like $K\cap P$. We have the following transitivity result.

\begin{lem}\label{lem:transitivity_iso}
We have an equality
$$\psi^G_{R,P}\circ (\mathsf{1}\psi^K_{Q_K,P_K})=
\psi^G_{Q,P}\circ (\psi^G_{R,Q}\mathsf{1}).$$
of functor morphisms
$$\,_{\mathcal{C}}\widetilde{\operatorname{Ind}}_R^G\circ \,_{\mathcal{C}}\widetilde{\operatorname{Ind}}_{Q_K}^K\circ \,_{\mathcal{C}}\widetilde{\operatorname{Ind}}_{P_M}^M\xrightarrow{\sim}\,_{\mathcal{C}}
\widetilde{\operatorname{Ind}}_P^G$$ Here $\mathsf{1}$ denotes the identity endomorphisms of the appropriate functors and the notation like $\mathsf{1}\psi^K_{Q_K,P_K}$ denotes the natural transformation between the corresponding composite functors.
\end{lem}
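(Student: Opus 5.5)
This is the usual associativity (cocycle) coherence for proper and flat base change isomorphisms. I would prove it by reducing both sides to a single diagram of stacks in which every isomorphism is a canonical base change morphism. The functor $\,_{\mathcal{C}}\widetilde{\operatorname{Ind}}_P^G=\pi_*\iota^*$ is pull-push along $\Cfrak_L/L\leftarrow \Cfrak_P/P\rightarrow \Cfrak_G/G$. The composite of the three inductions is pull-push along the zig-zag
$$\Cfrak_L/L\leftarrow \Cfrak_{P_M}/P_M\rightarrow \Cfrak_M/M\leftarrow \Cfrak_{Q_K}/Q_K\rightarrow \Cfrak_K/K\leftarrow \Cfrak_R/R\rightarrow \Cfrak_G/G.$$
The isomorphism $\psi$ in \eqref{eq:transitivity_iso} is built from the derived Cartesian square
$$\Cfrak_{P_M}/P_M\times_{\Cfrak_M/M}\Cfrak_Q/Q\cong \Cfrak_P/P,$$
using base change $\iota_Q^*\pi_{P_M,*}\cong \pi'_*\iota'^*$. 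It then identifies $\iota^*$ and $\pi_*$ of the composites via the canonical isomorphisms $(fg)^*\cong g^*f^*$ and $(fg)_*\cong f_*g_*$.

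First I would establish the geometric input: a staircase of derived Cartesian squares over the zig-zag, with apex $\Cfrak_P/P$. The three intermediate fiber products are $\Cfrak_{P_K}/P_K$, $\Cfrak_{Q}/Q$ and the apex. Each square is derived Cartesian; this is checked exactly as the Cartesian property for a single pair $Q\supset P$, using $P_K=P\cap K$, $P_M\subset M$, $Q_K\subset K$. The two ways of forming $\Cfrak_P/P$ agree with each other, not just up to isomorphism: both are the derived zero fiber of $P\times\pfrak\to\pfrak$. Hence the big "triangle" of squares consists of honest equalities of stacks, and all morphisms involved (projections to Levis, inclusions of parabolics) compose strictly.

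Next I would expand each side. The left side $\psi^G_{R,P}\circ(\mathsf 1\psi^K_{Q_K,P_K})$ first applies base change in the square over $\Cfrak_K/K$ built from $Q_K,P_K$, then in the square for $R\supset P$. The right side $\psi^G_{Q,P}\circ(\psi^G_{R,Q}\mathsf 1)$ first applies base change in the square for $R\supset Q$, then in the one for $Q\supset P$. Both are composites of elementary base change isomorphisms for the three small squares of the staircase, interleaved with composition isomorphisms for pullbacks and pushforwards. The key tool is the standard compatibility of base change with horizontal and vertical pasting of Cartesian squares: the base change isomorphism of a pasted square equals the composite of the base change isomorphisms of its pieces, modulo the composition isomorphisms. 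This holds in the $(\infty,2)$-categorical six-functor formalism for $\operatorname{QCoh}$ on derived stacks, e.g.\ the correspondence formalism of Gaitsgory--Rozenblyum \cite{GR}. With it, both sides become the base change isomorphism attached to the same total pasted rectangle, so they coincide.

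The main obstacle is making precise that $\psi$ really is \emph{the} pasting-compatible base change morphism, rather than some ad hoc identification. If $\,_{\mathcal{C}}\widetilde{\operatorname{Ind}}$ carries twisting data as in Section \ref{SS_assoc_graded_general} (canonical bundle twists and grading shifts), one must also check that the line bundle isomorphisms are transitive. In the $\pi_*\iota^*$ form used in the definition no twist appears, so I would work in that form. I would then invoke the functoriality of $\operatorname{QCoh}$ on the category of correspondences (Gaitsgory--Rozenblyum), where composition of correspondences is associative up to coherent homotopy. The transitivity then follows because both composites realize the same associator 2-cell of that functor, restricted to $\Coh_{\Nilp}$ by Lemma \ref{Lem:commuting_cube}.
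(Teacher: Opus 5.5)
Your proposal is correct and follows the paper's proof. The paper also builds the three-square staircase with apex $\Cfrak_P/P$ (the tops of the squares being $\Cfrak_{P_K}/P_K$, $\Cfrak_Q/Q$, $\Cfrak_P/P$) and concludes because both sides are composites of base change isomorphisms through these three derived Cartesian squares. The one step the paper spells out that you leave schematic is the upper square over $\Cfrak_{Q_K}/Q_K$, which is not literally an instance of the single-pair square; it is verified via $\Cfrak_P/\Ad(P)\cong[(P\times\pfrak)/\Ad(P)]\times^R_{[\pfrak/\Ad(P)]}\mathsf{B}P$, the isomorphisms $P\cong P_K\times_{Q_K}Q$ and $\pfrak\cong\pfrak_K\times_{\qfrak_K}\qfrak$, and the fact that loop stacks and shifted tangent stacks preserve derived fiber products.
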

\begin{proof}
Consider the following commutative diagram.
\[
    \begin{tikzcd}[column sep=tiny,row sep=small] &&& \Cfrak_P/P &&& \\ && \Cfrak_{P_K}/P_K && \Cfrak_Q/Q \\ & \Cfrak_{P_M}/P_M && \Cfrak_{Q_K}/Q_K && \Cfrak_R/R \\ \Cfrak_L/L && \Cfrak_M/M && \Cfrak_K/K && \Cfrak_G/G \arrow[from=1-4, to=2-3] \arrow[from=1-4, to=2-5] \arrow[from=2-3, to=3-2] \arrow[from=2-3, to=3-4] \arrow[from=2-5, to=3-4] \arrow[from=2-5, to=3-6] \arrow[from=3-2, to=4-1] \arrow[from=3-2, to=4-3] \arrow[from=3-4, to=4-3] \arrow[from=3-4, to=4-5] \arrow[from=3-6, to=4-5] \arrow[from=3-6, to=4-7] \end{tikzcd}
\]
The three squares are derived Cartesian: take the upper square as an example. We have that $\Cfrak_P\cong (P\times \pfrak)\times^R_\pfrak\{0\}$ and
\begin{align}
    \label{iso: Comm} \Cfrak_P/\Ad(P)\cong [(P\times \pfrak)/\Ad(P) ]\times^R_{[\pfrak/\Ad(P)]} \mathsf{B}P,
\end{align}
{  where we write $\mathsf{B}P$ for $\operatorname{pt}/P$,} while $(P\times \pfrak)/\Ad(P)\cong P/\Ad(P)\times_{\mathsf{B}P} \pfrak/\Ad(P)$.

Let $\qfrak:=\Lie(Q)$. We have {  isomorphisms} $P\cong P_K\times_{Q_K} Q$ and $\pfrak\cong \pfrak_K\times_{\qfrak_K}\qfrak$. Since the maps $Q\to Q_K$ and $\qfrak\to \qfrak_K$ are smooth and surjective, { we have $P_K\times_{Q_K} Q\cong P_K\times_{Q_K}^R Q, \pfrak_K\times_{\qfrak_K}\qfrak=\pfrak_K\times^R_{\qfrak_K}\qfrak$}. We also have $\mathsf{B}P\cong \mathsf{B}P_K\times^R_{\mathsf{B}Q_K}\mathsf{B}Q$. Moreover, taking loop stacks, resp. shifted tangent stack, preserves derived fiber products and gives $L(\mathsf{B}P)\cong [P/\Ad(P)]$, resp. $T_{\mathsf{B}P}[-1]\cong [\pfrak/\Ad(P)]$. Hence we also have
\[
    [P/\Ad(P)]\cong [P_K/\Ad(P_K)]\times^R_{[Q_K/\Ad(Q_K)]} [Q/\Ad(Q)],
\]
\[
    [\pfrak/\Ad(P)]\cong [\pfrak_K/\Ad(P_K)]\times^R_{[\qfrak_K/\Ad(Q_K)]} [\qfrak/\Ad(Q)].
\]
Finally, given the description \eqref{iso: Comm}, commutation of finite limits proves the upper square is derived Cartesian.

The isomorphisms $\psi^G_{R,P}\circ (\mathsf{1}\psi^K_{Q_K,P_K})$ and $\psi^G_{Q,P}\circ (\psi^G_{R,Q}\mathsf{1})$ are compositions of base changes through the three derived Cartesian squares in the diagram above, and therefore are equal.
\end{proof}

Now suppose $G=\operatorname{GL}_n$. We write $G_n$ for $\operatorname{GL}_n, \mathfrak{C}_n$ for the corresponding commuting derived scheme, and $\widetilde{\mathcal{C}}_n$ for $\operatorname{Coh}_{\mathcal{N},G_n\times \C^\times}(\mathfrak{C}_n)$. Our task is to equip $\bigoplus_{n\geq 0}\widetilde{\mathcal{C}}_n$ with a monoidal structure. For a composition $\vec{n}=(n_1,\ldots,n_k)$ we write $P_{\vec{n}}$ for the stabilizer of the standard partial flag $\C^{n_1}\subset \C^{n_1+n_2}\ldots \C^{n_1+\ldots+n_i}\subset\ldots \subset \C^n$ where $n=\sum_i n_i$.

1) The tensor product functor $\star: \wt{\Ccal}_n\boxtimes \wt{\Ccal}_m\rightarrow \wt{\Ccal}_{n+m}$ is given by $\,_{\mathcal{C}}\wt{\Ind}^{G_{n+m}}_{P_{(n,m)}}$.

2) The associativity isomorphism
$$(\mathcal{F}_{1}\star \mathcal{F}_2)\star \mathcal{F}_3\xrightarrow{\sim} \mathcal{F}_1\star (\mathcal{F}_2\star \mathcal{F}_3), \mathcal{F}_i\in \widetilde{\mathcal{C}}_{n_i},$$
is given by $$(\psi^n_{(n_1,n_2+n_3),\vec{n}})^{-1}\circ\psi^n_{(n_1+n_2,n_3),\vec{n}},$$ where $\vec{n}=(n_1,n_2,n_3)$ and the compositions in the subscript indicate the corresponding parabolics.

3) The monoidal unit is the object $\C$ in $\wt{\Ccal}_0=D^b\Rep(\C^\times)$.

4) The pentagon axiom is a consequence of Lemma \ref{lem:transitivity_iso}: under the identification of all terms involved with $\operatorname{Ind}^n_{\vec{n}}(\mathcal{F}_1\boxtimes \mathcal{F}_2\boxtimes \mathcal{F}_3\boxtimes \mathcal{F}_4)$ all isomorphisms in the pentagon diagram become the identity, as the lemma shows that these isomorphisms do not depend on the way to bracket.

\begin{rem}
Note that we have analogs of (\ref{eq:transitivity_iso}) for the affine Hecke categories for $\operatorname{GL}_n$. We write $\mathcal{AH}_n$ for the affine Hecke category for $\operatorname{GL}_n$. Direct analogs of Lemma \ref{lem:transitivity_iso} can be used to show that
\begin{itemize}
\item $\bigoplus_{n\geq 0}\mathcal{AH}_n$ is a monoidal category with respect to the parabolic induction.
\item And $\bigoplus_{n\geq 0}\widetilde{\operatorname{CH}}$ is a monoidal functor.
\end{itemize}
\end{rem}


\section{Symmetric group actions}\label{S_sym_group_act}
Suppose now $n=dr$ for $d,r\in\Z_{>0}$. Set $L=\GL_r^d\subset G=\GL_n$ with an associated parabolic $P$. Recall the $D$-module $\Nbb_{a,r}$ on $\GL_r$ from Section \ref{SS:GL lift}. 
We may consider the convolution power
\begin{align}
    \label{def: N conv d} \olNbb_{a,r}^{\star d} := {}_{\Ccal}\Ind_P^G(\olNbb_{a,r}^{\boxtimes d})
\end{align}
for $1\leq a\leq r-1$ coprime to $r$ when $r>1$ and $a=0$ when $r=1$. The main result in this section is the following.
\begin{thm}\label{prop_Sd_action}
There is an injection $\C S_d\hookrightarrow \End_{\MHM(G)}\bigl(\ol\Nbb_{a,r}^{\star d}\bigr)$.
\end{thm}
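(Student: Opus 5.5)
We construct the $S_d$-action by the standard Howlett--Lehrer/Springer mechanism for parabolic induction of cuspidal objects, and prove injectivity by a dimension count. Let $\mathcal{M}:=\olNbb_{a,r}^{\boxtimes d}$ on $L=\GL_r^d$. The group $N_G(L)/L\cong S_d$ permutes the factors of $L$ and fixes $\mathcal{M}$ up to canonical isomorphism, since all factors carry the same object. The key input is that $\olNbb_{a,r}$ is cuspidal. It is clean and supported on $\mu_r\cdot\Unip_r$, and it carries a central character of order $r$. Hence its parabolic restrictions to proper Levis vanish, by the stalk argument in Lemma~\ref{lem: coxeter_cuspidal}: a central character of order $r$ cannot factor through $\pi_0$ of centralizers in a proper Levi. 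It follows that $\mathcal{M}$ is cuspidal on $L$, twisted as in Remark~\ref{rem:twisted_CH}.

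First, we bound the endomorphism algebra. By adjunction between ${}_{\Ccal}\Ind_P^G$ and parabolic restriction, together with the Mackey formula (Bernstein--Lusztig), we get
\[
\End\bigl(\olNbb_{a,r}^{\star d}\bigr)\cong\bigoplus_{w\in S_d}\Hom\bigl(\mathcal{M},w^*\mathcal{M}\bigr).
\]
Only double cosets $W_L\backslash W/W_L$ normalizing $L$ survive, by cuspidality. Each summand is one-dimensional, because $\End(\olNbb_{a,r})$ is spanned by the idempotents of the summands $z\Nbb_a$ and the twisted diagonal setup makes the total $\Hom$ one-dimensional per $w$. I expect some care here with the $\mu_r$-decomposition in $\olNbb_{a,r}$, and I would handle it by working $\mu_r^d$-equivariantly. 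This gives $\dim\End\le d!$ in the category of $D$-modules. The same holds in $\MHM$, since there $\Hom$ is the weight-zero Hodge part of the $D$-module $\Hom$.

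Second, we construct the action. The approach is to realize ${}_{\Ccal}\Ind_P^G(\mathcal{M})$ as $\kappa_{\dagger}$ of a clean, equivalently intermediate, extension along the map $\kappa:\widetilde{Y}\to G$, where
\[
\widetilde{Y}=\{(g,hP): h^{-1}gh\in P,\ \text{image in } L \text{ lies in } Z(L)^{\circ}_{\mathrm{reg}}\cdot\mathrm{supp}\,\mathcal{M}\}.
\]
Over the locus where the $Z(L)^\circ$ component is regular, $\kappa$ is a Galois $S_d$-covering onto its image, exactly as in Lusztig's generalized Springer theory. The induced object is the intermediate extension of its restriction there, by the smallness of $\kappa$ (Lusztig, Character sheaves I, \S4). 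On the covering locus $S_d$ acts by deck transformations, and the equivariant structure of $\mathcal{M}$ under permutation of factors lifts this to the pushforward. Intermediate extension then propagates these automorphisms to $\olNbb_{a,r}^{\star d}$. Because the deck action preserves the weight-zero polarizable variation of Hodge structure, the construction takes place in $\MHM(G)$, possibly after normalizing by a sign character.

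Third, we prove injectivity. On the generic covering locus the pushforward of a pullback along the Galois $S_d$-cover decomposes by the regular representation twisted by local data, so the map $\C S_d\to\End$ is injective already after restriction to that open set. Combined with the dimension bound this even gives an isomorphism, but only injectivity is claimed.

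The main obstacle is the second step. The difficulty is that the permutation isomorphisms $w^*\mathcal{M}\cong\mathcal{M}$ must assemble into an honest group action, with no cocycle, and must be compatible with the Hodge structures and the $\chi$-twist. I would first try the canonical choice in which $S_d$ permutes tensor factors of $\olNbb_{a,r}^{\boxtimes d}$ with no signs. The cocycle is then trivial because the permutation isomorphisms of an external tensor power compose strictly. I would check that this remains compatible with the twisted equivariance by verifying it on the generic locus, where everything is a local system.
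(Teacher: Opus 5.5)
Your proof is correct, and its core (Steps 2 and 3) has the same architecture as the paper's proof. There too $\ol\Nbb_{a,r}^{\star d}$ is identified with $IC(G,\Ebb_a^\circ)$, where $\Ebb_a^\circ$ is the pushforward of a rank-one local system along the $S_d$-Galois cover $\wt G_d^\circ\to G_d^\circ$. The action comes from block-permutation equivariance of $(\wt\Ebb_a^\circ)^e$, faithfulness comes from the regular representation on stalks, and the conclusion follows from full faithfulness of intermediate extension.

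The real difference is how the IC property is proved.
\begin{itemize}
\item You import Lusztig's induction theorem for a cuspidal datum on an isolated stratum, i.e.\ the fiber-dimension (smallness) estimates you cite. Cleanness makes the $!$- and $\dagger$-pushforwards along the non-proper $\kappa$ coincide, so both the support and cosupport conditions follow.
\item The paper instead argues microlocally, in the style of Hotta--Kashiwara. It first pushes to $G\times A_d$, where the $S_d$-sheets are separated. It then shows that $SS(\Mbb_a)$ is the irreducible Lagrangian $\Lambda=\ol{G\cdot\Lambda_L}$ (Lemma~\ref{lem: Lambda irr}, using Baranovsky's irreducibility of commuting nilpotent pairs). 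From this it deduces that $\Mbb_a$ has no cohomology in nonzero degrees and no subobjects or quotients supported on the boundary (Corollary~\ref{coro: M IC}). Only then does it push along a finite map (Proposition~\ref{prop: IC N^d}).
\end{itemize}
Your route is type-independent and short given the literature. However, it requires transporting Lusztig's results, stated for connected reductive groups without twist, to the $\chi$-twisted $\GL_n$ setting, e.g.\ via $S(L)\subset\SL_n$ as the paper does elsewhere. The paper's route is self-contained within $D$-modules but relies on a type A input.

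Your Step 1 is unnecessary for the injection; it is only what would upgrade it to an isomorphism. As written it is also imprecise in two places:
\begin{itemize}
\item On $\GL_r$, $\olNbb_{a,r}$ is a single irreducible twisted-equivariant module supported on $Z(\GL_r)\cdot\Unip$, with one-dimensional endomorphism algebra. The sum $\bigoplus_z z\Nbb_a$ belongs to the $\SL_r$ picture.
\item Cuspidality should be quoted from Lusztig's classification. The stalk argument of Lemma~\ref{lem: coxeter_cuspidal} controls supports of $\CH$, not parabolic restrictions.
\end{itemize}
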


Note that when $r=1$, $\olNbb_{0,1}^{\star n}$ is the group version Grothendieck-Springer $D$-module, and the result is standard. Our construction generalizes this case and follows a similar proof strategy as in \cite{HK}.

To set up the proof, let $W_L=S_r^d$ be the Weyl group of the Levi $L=\GL_r^d$; the relative Weyl group $N_W(W_L)/W_L$ is $S_d$, permuting the $d$ blocks, and it is this $S_d$ that will act below. Take $T_d=Z(L)$ and put $ A_d:=P/[P,P]\cong L/[L,L]. $ Let $ \nu:T_d\longrightarrow A_d $ be the natural isogeny. Let
\[
    T_d^\circ := \{t\in T_d\mid \Stab_W(t)=W_L,\ \Stab_{S_d}(\nu(t))=\{1\}\}
\]
and set $A_d^\circ:=\nu(T_d^\circ)$.

Let $U_P$ be the unipotent radical of $P$, and let $O_{(r)}\subset\GL_r$ be the principal unipotent orbit. Take
\[
    \wt G_d^\circ := G\times^P \bigl(T_d^\circ\cdot O_{(r)}^d\cdot U_P\bigr).
\]
There is an isomorphism
\begin{align}
    \label{iso: UTO} U_P \times T_d \times O_{(r)}^d\to T_d\cdot O_{(r)}^d\cdot U_P,\ (u,t,x)\mapsto txu.
\end{align}
Define $G_d^\circ\subset G$ to be the image of $\wt G_d^\circ$ under $ G\times^P P\longrightarrow G,\ [g,p]\longmapsto g\cdot p$, { where in the homogeneous bundle the action of $P$ on itself is adjoint}. There is an embedding
\[
    \wt G_d^\circ\longrightarrow G\times A_d,\qquad [g,tuv]\longmapsto\bigl(g\cdot (tuv),\nu(t)\bigr)
\]
Moreover, $\wt G_d^\circ \cong G_d^\circ\times_{A_d^\circ/S_d}A_d^\circ, $ so the induced map $\wt G_d^\circ\to G_d^\circ$ is an $S_d$-Galois cover. Set $G_d$ to be the closure of $G_d^\circ$ in $G$.

Recall that $\pi_1(O_{(r)})=\Z/r\Z$. Let $\Kbb_a$ be the integrable connection on $O_{(r)}$ corresponding to $e^{2\pi a\sqrt{-1}/r}\in (\Z/r\Z)^*$. Let $\wt{\Ebb}_a^e$ be the connection on $T_d\cdot O_{(r)}^d\cdot U_P$ pulled back from the connection $\Kbb_a^{\boxtimes d}$ on $O_{(r)}^d$ with respect to \eqref{iso: UTO}. Put $S(P)=P\cap\SL_n$ and $S(L)=L\cap \SL_n$. Each $\Kbb_a$  is $GL_r$-twisted equivariant. Moreover, $Z(\GL_r)$ acts on $\Kbb_a$ by $z\mapsto z^a$. Therefore, $\Kbb_a^{\boxtimes d}$ is $S(L)$-equivariant.

Let $ j:T_d\cdot O_{(r)}^d\cdot U_P\hookrightarrow P $ be the locally closed inclusion. In each $\GL_r$ block, the restriction of $\wt{\Ebb}_a^e$ is clean as in Section \ref{S_traces}. Therefore, $ j_!\wt{\Ebb}_a^e \cong j_{!*}\wt{\Ebb}_a^e \cong j_\dagger\wt{\Ebb}_a^e, $ and we denote this object by $\wt{\Mbb}_a^e$. Let $\wt{\Mbb}_a = \Ind_{S(P)}^{\SL_n}\wt{\Mbb}_a^e$, { this is a D-module on $G\times^P P$}. It is also the $\dagger$-extension of $ \wt{\Ebb}_a := \Ind_{S(P)}^{\SL_n}\wt{\Ebb}_a^e$. By construction, $\ol\Nbb_{a,r}^{\star d}$ is the pushforward of $\wt{\Mbb}_a$ to $G$.

{  Let $\Unip_L$ denote the unipotent variety of $L$, $\Nilp_L$ denote the nilpotent cone in $\mathfrak l$.} Under the trace form, we identify $\gfrak\cong \gfrak^*$ as well as $\pfrak^*\cong \pfrak_-$ etc.

\begin{lem}\label{lem: SS M tilde}
We have $ SS(\wt{\Mbb}_a)=G\times^P\Sigma, $ where
	\[
    \Sigma
    = \{(tuv,x)\mid t\in T_d,\ u\in\Unip_L,\ v\in U_P,\ x\in\Nilp_L,\ u\cdot x=x\}.
\]
\end{lem}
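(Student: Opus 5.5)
Since the singular support is compatible with induction $\Ind_{S(P)}^{\SL_n}$ (the analogue of Lemma \ref{prop_gr_induction} at the level of characteristic varieties), it suffices to show $SS(\wt{\Mbb}^e_a)=\Sigma$ inside $T^*P\cap\mu^{-1}(0)$. Here we identify $T^*P\cong P\times\pfrak^*\cong P\times\pfrak_-$ via right trivialization and the trace form. We also use that the zero fiber of the adjoint moment map for $P$ consists of pairs $(p,x)$ with $p\cdot x=x$.

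We use the product decomposition \eqref{iso: UTO}. Extended to the closure, it gives
\[
    P\supset \overline{T_d\cdot O_{(r)}^d\cdot U_P}=T_d\cdot\Unip_L\cdot U_P,
\]
and the whole picture is the pullback of $T_d\times\Unip_L^{\,d}$-data along the smooth projection $P\to L$, $tuv\mapsto tu$. Concretely, $\wt{\Mbb}^e_a$ is the smooth pullback along $P\to L$ of the $D$-module $\Ocal_{T_d}\boxtimes(\text{clean extension of }\Kbb_a)^{\boxtimes d}$ on $T_d\cdot\Unip_L\subset L$, pushed forward along the closed embedding $T_d\cdot\Unip_L\hookrightarrow L$.

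Singular support behaves well under each of these operations.
\begin{itemize}
\item[(a)] Under smooth pullback, $SS$ is $\rho_{f}\varpi_f^{-1}$ of $SS$ (the same correspondence \eqref{diagram_lagrangian} used in Lemma \ref{lemma_gr_pullback}). For $P\to L$, this means the covector is pulled back from $\mathfrak l^*$, i.e.\ $x\in\mathfrak l\subset\pfrak_-$ with no component in $\mathfrak u_P^-$.
\item[(b)] External products give products of singular supports.
\item[(c)] For $T_d$ with the trivial connection, $SS$ is the zero section in the $T_d$-direction. Since $T_d$ is central in $L$, the covectors along $\mathfrak t_d$ vanish, and in the trace pairing the remaining covector lies in $[\mathfrak l,\mathfrak l]$, i.e.\ in $\bigoplus\mathfrak{sl}_r$.
\end{itemize}
This reduces everything to a single $\GL_r$ block: we need $SS(\Nbb_a)=\{(u,x):u\in\Unip,\ x\in\Nilp,\ u\cdot x=x\}$ for the cuspidal (clean) sheaf on $\GL_r$.

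For the one-block statement, we take $\GL_r$-equivariance and support on the unipotent cone as given. These imply $SS\subset\{(u,x)\mid u\in\Unip,\ u\cdot x=x,\ x\in(\text{conormal to }\Unip)\}$, and the conormal directions to $\Unip$ are nilpotent in the trace-form identification. Indeed, characteristic varieties of character sheaves lie in $G\times\Nilp$, as recalled before Theorem \ref{thm: grCH} via \cite{ginzburg_admissible}. This gives the inclusion $\subset$.

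For the reverse inclusion, we use that $\Nbb_a$ is clean of full support on $\Unip$ and that $\{(u,x):u\in\Unip,x\in\Nilp,ux=x\}$ is the union of the conormal bundles $T^*_{z\cdot O}$ over unipotent orbits. We expect this to be the main obstacle: we must show every conormal bundle to every unipotent orbit appears, i.e.\ that the cuspidal sheaf has \emph{maximal} singular support. We would argue this via the Hodge associated graded. By Corollary \ref{coro: main truncated comparison}, $\gr\Nbb_a$ is, up to twist, $\wt{\CH}$ of $\Ocal_{\Lambda_{c^a}}$, and its support is the image under $\varpi$ of $r^{-1}(\Lambda_{c^a})$. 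Since $\Lambda_{c^a}$ surjects onto the nilpotent cone in the $x$-variable (the regular semisimple graph of $c^a$ degenerates over all of $\Nilp$), the image contains all $(u,x)$ with $x\in\Nilp$ and $u$ in $\overline{O}_{reg}\cap Z_G(x)$-unipotent.

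An alternative route is the microlocal Euler obstruction: cleanness forces the characteristic cycle to have nonzero multiplicity along every conormal. This follows from the Kashiwara index formula, since the stalk Euler characteristics vanish off $\Unip_r$ while the generic rank is $1$.

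Finally, we assemble the blocks via (a)–(c) and induce to obtain $SS(\wt{\Mbb}_a)=G\times^P\Sigma$.
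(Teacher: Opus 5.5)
Your reduction to a single block is exactly the paper's ``block-wise construction'': induction via Lemma \ref{prop_gr_induction}, smooth pullback along $P\to L$, external products, and the zero section along the central factor $T_d$. Your upper bound $SS(\Nbb_a)\subset\{(u,x)\mid u\in\Unip,\ x\in\Nilp,\ u\cdot x=x\}$ is also fine. It follows from equivariance, the support condition, and the nilpotency of characteristic varieties of character sheaves \cite{ginzburg_admissible}; that last result, not the conormal geometry of $\Unip$, is what gives $x\in\Nilp$.

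The gap is in the reverse inclusion. The set above is \emph{not} the union of the conormal bundles to the unipotent classes. The conormal fiber at $u$ to the class of $u$ is the full centralizer of $u$ in $\mathfrak{sl}_r$, and this contains non-nilpotent elements unless $u$ is regular; at $u=1$ it is all of $\mathfrak{sl}_r$. So the claim you set out to prove, that the conormal bundle of every unipotent orbit occurs in $SS(\Nbb_a)$, is false: it contradicts the upper bound you have just established. Neither of your two routes can work as stated:
\begin{itemize}
\item The index-formula route fails already for $\SL_2$. There the local Euler obstruction of the unipotent cone at $1$ vanishes, so $T^*_{\{1\}}\SL_2$ does not occur in the characteristic cycle of the cuspidal module.
\item The Hodge-graded route proves nothing. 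The support of the derived pushforward $\varpi_*$ can be strictly smaller than the image of the support, and you never identify that image with the whole set.
\end{itemize}

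The missing input is irreducibility. Via $u\mapsto u-1$, the set above is the variety of pairs of commuting nilpotent $r\times r$ matrices. By \cite{Baranovsky} this variety is irreducible of dimension $r^2-1=\dim\SL_r$, and the locus where the first matrix is regular is dense. That locus is exactly the conormal bundle $T^*_{\Unip_r}\SL_r$ of the regular class, since nilpotent elements commuting with a regular nilpotent are polynomials in it without constant term. Since $\Nbb_a$ restricts to a nonzero connection on $\Unip_r$, you get $T^*_{\Unip_r}\SL_r\subset SS(\Nbb_a)$, and taking closures gives the reverse inclusion. Equivalently, $SS(\Nbb_a)$ is nonempty of pure dimension $\dim\SL_r$ inside an irreducible variety of the same dimension.

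This is the argument the paper itself uses at the end of Lemma \ref{lem: Lambda irr}. For the present lemma, the paper simply quotes the one-block equality from \cite[Theorem 5(b)]{Lusztig_fourier_cuspidal} and then proceeds exactly as in your reduction. With this substitution your proof goes through.
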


\begin{proof}
First of all, by \cite[Theorem 5(b)]{Lusztig_fourier_cuspidal}, $SS(\Nbb_{a,r}) = \{(u,x)\mid \ u\in\Unip_{\GL_r},\ x\in\Nilp_{\GL_r},\  u\cdot x=x\}.$ As a result,
	\[
    SS(\ol{\Nbb}_{a,r})
    = \{(zu,x)\mid z\in\C^\times,\ u\in\Unip_{\GL_r},\ x\in\Nilp_{\GL_r},\  u\cdot x=x\}.
\]
Hence by the block-wise construction, $SS(\wt{\Mbb}_a^e)=\Sigma$. The lemma now follows from Lemma \ref{prop_gr_induction}.
\end{proof}

\subsection{Generalized Springer $D$-module}

Let $ q_P:P\longrightarrow A_d=P/[P,P] $ be the quotient map and define
\[
    f:G\times^P P\longrightarrow G\times A_d,\qquad [g,h]\longmapsto\bigl(g\cdot h,q_P(h)\bigr).
\]
In particular, if $h=tuv$ with $t\in T_d$, $u\in\Unip_L$, and $v\in U_P$, then $q_P(h)=\nu(t)$.

Define $ \Mbb_a:=f_\dagger\wt{\Mbb}_a, $ which resembles the Harish-Chandra $D$-module of Hotta--Kashiwara \cite{HK}. Let $ \tfrak_d:=\Lie(T_d)$ so that $\pfrak$ has a splitting $\tfrak_d\oplus [\pfrak,\pfrak]$ and hence $\Lie(A_d)\cong\tfrak_d$.

We identify
\[
    T^*(G\times A_d) \cong G\times A_d\times\gfrak\times\tfrak_d.
\]

\begin{lem}\label{lem: Lambda irr}
The singular support of $\Mbb_a$ is $\Lambda:=\ol{G\cdot\Lambda_L}$, where
	\[
    \Lambda_L := \{(tu,\nu(t),x+y,-y)\mid t\in T_d,\ u\in\Unip_L,\ x\in\Nilp_L,\ y\in\tfrak_d,\  u\cdot x=x\}.
\]
Moreover, $\Lambda$ is irreducible and Lagrangian in $T^*(G\times A_d)$.
\end{lem}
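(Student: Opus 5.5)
To compute $SS(\Mbb_a)$, I will estimate it from above using the standard bound for proper pushforward, and then use a dimension count and the structure of the generic part to get equality and irreducibility.

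\emph{Upper bound.} Since $f$ is proper, Kashiwara's estimate gives $SS(f_\dagger\wt\Mbb_a)\subset \varpi_f\bigl(\rho_f^{-1}(SS(\wt\Mbb_a))\bigr)$, where $\rho_f,\varpi_f$ are as in \eqref{diagram_lagrangian}. Lemma \ref{lem: SS M tilde} gives $SS(\wt\Mbb_a)=G\times^P\Sigma$. By $G$-equivariance it suffices to compute at points $[1,h]$ with $h=tuv$. A covector $(\xi,\eta)\in\gfrak\times\tfrak_d$ at $(h,\nu(t))$ pulls back along $df$. The condition is that the pullback lies in $\Sigma$ at $h$, meaning the $\pfrak^*\cong\pfrak_-$ component lies in $\Nilp_L$ and is centralized by $u$. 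The $G$-direction (the $\gfrak/\pfrak$ part) forces $\xi\in\pfrak$ with $h\cdot\xi=\xi$ modulo the appropriate terms. Dually, the $A_d$-direction contributes $-\eta$ to the $\tfrak_d$-component. I expect to find exactly that such covectors are $(\xi,\eta)=(x+y,-y)$ with $x\in\Nilp_L$, $y\in\tfrak_d$, $u\cdot x=x$, up to contributions from $U_P$ and $\mathfrak{u}_P$. Hence the image is contained in $\overline{G\cdot(\text{the }P\text{-saturation of }\Lambda_L)}$. The $U_P$-parts are absorbed into the $G$-closure because $U_P$-conjugation moves $tu$ to $tuv$ for generic $t$. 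This holds in particular over $T_d^\circ$, where $\Stab_W(t)=W_L$, so $C_G(t)=L$ and the map $U_P\to tuU_P$, $w\mapsto w(tu)w^{-1}$, is an isomorphism. This gives $SS(\Mbb_a)\subset\Lambda$.

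\emph{Dimension and irreducibility.} Over $A_d^\circ$, I will compute $\dim G\cdot\Lambda_L$ as follows.
\[
\dim\Lambda_L=\dim T_d+\dim\tfrak_d+\dim\{(u,x)\in\Unip_L\times\Nilp_L: u\cdot x=x\},
\]
and the last set has dimension $\dim L-\dim T_d$, since it is the Lagrangian $SS$ of the cuspidal data on $L$; compare Lemma \ref{lem: SS M tilde}. The $G$-saturation adds $\dim G-\dim L$, because generically the stabilizer of a point of $\Lambda_L$ in $G$ lies in $L$ and preserves $\Lambda_L$. The total is therefore $\dim G+\dim A_d$, so $\Lambda$ has half the dimension of $T^*(G\times A_d)$. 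Irreducibility follows because $\Lambda_L$ is irreducible: it is a vector-space bundle ($\tfrak_d$) over the product of $T_d$ with the irreducible variety $\{(u,x): u\cdot x=x\}$, the latter irreducible since it is the closure of the conormal to the principal unipotent orbit in each block, as in \cite{Lusztig_fourier_cuspidal}. Hence its $G$-saturation and closure are irreducible. Isotropy holds because $\Lambda$ is the closure of a conormal-type variety; concretely, over $A_d^\circ$ it is the conormal bundle to the smooth locus of $\{(g,\nu(t)) : g\in G\cdot tO_{(r)}^d\}$, and a closed irreducible conic set of half dimension containing a dense Lagrangian is Lagrangian.

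\emph{Equality.} Since $\Mbb_a\neq0$ is holonomic, its characteristic cycle is a nonzero sum of Lagrangian components contained in $\Lambda$, and irreducibility forces $SS(\Mbb_a)=\Lambda$. Nonvanishing of $\Mbb_a$ is clear because over $G_d^\circ\times A_d^\circ$ the map $f$ is an isomorphism onto its image; indeed $\wt G_d^\circ\hookrightarrow G\times A_d$. The step I expect to be the main obstacle is the precise pullback computation in the upper bound, where the codifferential of $f$ must be tracked in the $A_d$-direction (the source of the $-y$). To handle it I would first work on the open dense locus $T_d^\circ$, where everything is explicit, and then control the boundary only through the closure, which suffices because the upper bound plus irreducibility pins down the answer.
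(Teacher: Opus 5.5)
You have the right overall architecture: Kashiwara's bound for the proper map $f$, a dimension count, irreducibility, and equality from purity of the characteristic variety. But there is a genuine gap. The upper bound $SS(\Mbb_a)\subseteq\Lambda$ is only established over the locus $t\in T_d^\circ$, and the sentence ``the $U_P$-parts are absorbed into the $G$-closure'' is exactly where the content of the lemma lies.

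Done carefully (as in the paper, by factoring $f$ through the closed embedding $G\times^PP\hookrightarrow G/P\times G\times A_d$), the estimate gives $G\cdot\Lambda_P$, where
\[
\Lambda_P=\{(h,q_P(h),z,-\bar z)\mid h\in T_d\Unip_LU_P,\ z\in\tfrak_d+\Nilp_L+\nfrak_P,\ h\cdot z=z\}.
\]
As soon as two blocks of $t$ share an eigenvalue, neither $h$ nor $z$ can in general be conjugated into $L$, resp.\ $\mathfrak l$. For example, take $d=2$ and $t=1$. Let $h=J$ be a regular unipotent of $\GL_{2r}$ in upper triangular form (it lies in $\Unip_LU_P$), and let $z=p(J-1)$ with $p(0)=0$. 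Then $(J,\nu(1),z,0)\in\Lambda_P$. This point is not in $G\cdot\Lambda_L$: every group component of a point of $G\cdot\Lambda_L$ is conjugate into $L$, so it has Jordan blocks of size at most $r$. Whether such points lie in $\ol{G\cdot\Lambda_L}$ is a deformation question, not a conjugation one.

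Your remark that ``the upper bound plus irreducibility pins down the answer'' is therefore circular. Irreducibility of $\Lambda$ says nothing about whether $G\cdot\Lambda_P$ has further Lagrangian components over the non-generic locus. If it did, $SS(\Mbb_a)$ could contain them, and excluding exactly this is what Corollary~\ref{coro: M IC} relies on.

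The paper fills the gap as follows.
\begin{enumerate}
\item Decompose a commuting pair $(h,z)$ from $\Lambda_P$ into joint generalized eigenspaces of dimensions $rm_i$.
\item Use Baranovsky's theorem: commuting nilpotent pairs in $\gl_{rm_i}$ form an irreducible variety in which pairs with regular nilpotent first entry are dense. This reduces to pairs $(J_{rm_i},p_i(J_{rm_i}))$.
\item Deform such a pair by splitting $J_{rm_i}$ into $m_i$ Jordan blocks of size $r$ with distinct eigenvalues. The deformed pairs lie in $G\cdot\Lambda_L$.
\end{enumerate}
A dimension count would also work. Commuting nilpotent pairs in $\gl_m$ form a variety of dimension $m^2-1$, so the strata of $G\cdot\Lambda_P$ with fewer than $d$ distinct joint eigenvalue pairs have dimension $<\dim G+\dim A_d$. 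The strata with $d$ distinct pairs already lie in $G\cdot\Lambda_L$.

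Separately, the irreducibility of $\{(u,x)\in\Unip_L\times\Nilp_L\mid u\cdot x=x\}$ is equivalent to the density of the conormal to the regular orbit in it. The paper takes this from Baranovsky's theorem, and cites Lusztig only for the set-theoretic description of $SS$. Your appeal to Lusztig for irreducibility should be replaced accordingly.
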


\begin{proof}
Take
	\[
    i:G\times^P P\longrightarrow G/P\times G\times A_d, \qquad [g,h]\longmapsto(gP,g\cdot h,q_P(h)).
\]
Recall the maps $\rho_i$ and $\varpi_i$ from \eqref{diagram_lagrangian}. Since $i$ is a closed embedding,
	\[
    SS(i_\dagger\wt{\Mbb}_a)
    = \varpi_i\rho_i^{-1}\bigl(SS(\wt{\Mbb}_a)\bigr).
\]

By $G$-equivariance, it suffices to calculate { the fiber of } $SS(i_\dagger\wt{\Mbb}_a)$ over $[1,h]$. For $\alpha\in\gfrak$ and $\beta\in\pfrak$, we have
	\[
    di_{[1,h]}(\alpha,\beta)
    = \left( \alpha\bmod\pfrak,\ h^{-1}\cdot \alpha-\alpha+\beta,\ dq_P(\beta) \right).
\]
Suppose that
	\[
    (x,y,z) \in T^*_{i([1,h])}(G/P\times G\times A_d)
\]
annihilates the image of $di_{[1,h]}$. Then
	\[
    \langle x,\alpha\rangle + \langle y, h^{-1}\cdot \alpha-\alpha+\beta\rangle + \langle z,dq_P(\beta)\rangle
    =0
\]
for every $\alpha\in\gfrak$ and $\beta\in\pfrak$. It follows that $x=y- h\cdot y$,
	\[
    y\in[\pfrak,\pfrak]^\perp
    = \tfrak_d+\nfrak_P, \qquad z=-\ol{y}
\]
where $\ol{y}$ is the projection of $y$ to $\mathfrak{t}_d\cong \mathfrak{p}/[\mathfrak{p},\mathfrak{p}]$ Hence
	\[
    \Ker(d^*i_{[1,h]})
    = \{(y- h\cdot y,y,-\ol{y})\mid y\in\tfrak_d+\nfrak_P\}.
\]

Given Lemma \ref{lem: SS M tilde}, we obtain
	\begin{align*}
    SS(i_\dagger\wt{\Mbb}_a)|_{i([1,tuv])}
    = \{(&x+y- tuv\cdot (x+y),x+y,-\ol{y})\mid\\ &x\in\Nilp_L,\  u\cdot x=x,\ y\in\tfrak_d+\nfrak_P,\ t\in T_d,\ u\in\Unip_L,\ v\in U_P\}.
\end{align*}

Note that $(x+y)- tuv\cdot (x+y)\equiv x- u\cdot x \pmod{\nfrak_P}$. Take $P_d:=T_d\Unip_LU_P$ and $\pfrak_d:=\tfrak_d+\Nilp_L+\nfrak_P$, and let $pr:G/P\times G\times A_d\longrightarrow G\times A_d$ be the projection. Then
	\[
    \rho_{pr}^{-1}\bigl(SS(i_\dagger\wt{\Mbb}_a)\bigr) \cong G\times^P\Lambda_P,
\]
where
	\[
    \Lambda_P
    = \{(h,q_P(h),z,- \overline{z})\in P_d\times A_d\times\pfrak_d\times\tfrak_d \mid h\cdot z=z\}.
\]
Therefore, $SS(\Mbb_a) \subseteq \ol{G\cdot\Lambda_P}.$

We claim that $\ol{G\cdot\Lambda_P}=\ol{G\cdot\Lambda_L}$. The inclusion $\Lambda_L\subseteq\Lambda_P$ is immediate. Conversely, take $(h,q_P(h),z,-\ol z)\in\Lambda_P$. Suppose that the commuting pair $(h,z)$ has distinct joint eigenvalue pairs $(\xi_i,\eta_i)_{i=1}^k$, with corresponding generalized eigenspaces of dimensions $rm_i$, where $\sum_{i=1}^k m_i=d$. After replacing $(h,z)$ by a $G$-conjugate, we may assume that
	\[
    (h,z)\in \left(\prod_{i=1}^k\GL_{rm_i}\right) \times \left(\prod_{i=1}^k\mathfrak{gl}_{rm_i}\right).
\]
Let $h_i$ and $z_i$ be the $i$-th blocks of $h$ and $z$, respectively. Then $ (\xi_i^{-1}h_i-I,z_i-\eta_iI) $ is a pair of commuting nilpotent matrices.

By \cite{Baranovsky}, the variety of pairs of commuting nilpotent matrices is irreducible, and the open locus on which the first matrix is regular nilpotent is dense. To show $\Lambda_P\subset \ol{G\cdot \Lambda_L}$, it therefore suffices to consider
	\[
    \left(\xi_i^{-1}h_i-I,z_i-\eta_iI\right)
    = \left(J_{rm_i},p_i(J_{rm_i})\right),
\]
where $J_{rm_i}$ is a regular nilpotent Jordan block and $p_i$ is a polynomial satisfying $p_i(0)=0$.

The matrix $J_{rm_i}$ lies in the closure of the regular locus with $m_i$ distinct eigenvalues and one Jordan block of size $r$ over each eigenvalue. Deforming $J_{rm_i}$ within this locus and simultaneously deforming $p_i(J_{rm_i})$ by applying the same polynomial, we obtain commuting pairs lying in $G\cdot\Lambda_L$. Hence $\Lambda_P\subseteq\ol{G\cdot\Lambda_L}$. Together with the reverse inclusion, this proves $\ol{G\cdot\Lambda_P}=\ol{G\cdot\Lambda_L}(=\Lambda)$. Consequently, $SS(\Mbb_a)\subseteq\Lambda$.

Now
	\[
    \Lambda_L \cong T_d\times\tfrak_d\times \{(u,v)\in\Unip_L\times\Nilp_L\mid u\cdot v=v\}.
\]
The map $u\mapsto u-I$ identifies the last factor with a product of varieties of pairs of commuting nilpotent matrices. Therefore $\Lambda_L$ is irreducible by \cite{Baranovsky}, and it follows that $\Lambda=\ol{G\cdot\Lambda_L}$ is irreducible.

Finally, on the dense smooth locus $\wt G_d^\circ\subset G\times A_d$, the restriction of $\Mbb_a$ is a nonzero integrable connection. Therefore $ \ol{T^*_{\wt G_d^\circ}(G\times A_d)} \subseteq SS(\Mbb_a). $ At a point $(tu,\nu(t))\in\wt G_d^\circ$, where $t\in T_d^\circ$ and $u\in O_{(r)}^d$, we have
	\[
    T^*_{\wt G_d^\circ,(tu,\nu(t))}(G\times A_d)
    = \{(x+y,-y)\mid x\in\Nilp_L,\  u\cdot x=x,\ y\in\tfrak_d\}.
\]
This is precisely the restriction of $\Lambda$ to the dense locus where $t\in T_d^\circ$ and every component of $u$ is regular unipotent. Hence $\Lambda=\ol{T^*_{\wt G_d^\circ}(G\times A_d)} \subseteq SS(\Mbb_a)$. Combining this with the reverse inclusion proves $SS(\Mbb_a)=\Lambda$. Moreover, $\Lambda$ is Lagrangian because it is the closure of the conormal bundle to the smooth locally closed subvariety $\wt G_d^\circ$.
\end{proof}
For a connection $\Lcal$ on a smooth locally closed subvariety $U\subset X$, we write $IC(X,\Lcal)$ for its intermediate extension to $X$, supported on $\overline U$.

Let $\wt{\Ebb}_a^\circ$ denote the restriction of $\wt{\Ebb}_a$ to $\wt G_d^\circ$.
\begin{coro}\label{coro: M IC}
We have $\Mbb_a\cong IC(G\times A_d,\wt{\Ebb}_a^\circ)$.
\end{coro}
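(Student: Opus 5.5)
The strategy is the Hotta--Kashiwara one: first show that $\Mbb_a$ is semisimple with no summands supported on proper closed subsets, and then identify it through its restriction to the dense open set $\wt G_d^\circ$.

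\emph{Semisimplicity and purity.} The map $f:G\times^P P\to G\times A_d$ is proper, since it factors as the closed embedding $i$ followed by the projection $pr$ from $G/P\times G\times A_d$, and $G/P$ is projective. The object $\wt{\Mbb}_a$ is pure, being the intermediate extension of the weight-zero variation $\wt{\Ebb}_a$; by cleanness it is also the $\dagger$- and $!$-extension. The decomposition theorem therefore writes $\Mbb_a=f_\dagger\wt{\Mbb}_a$ as a direct sum of shifted simple pure Hodge modules $IC(\overline{S},\Lcal)$.

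\emph{Ruling out shifts and extra summands.} Two facts do this.
\begin{enumerate}
\item $\Mbb_a$ is concentrated in degree $0$. I would show this as in the proof of Theorem \ref{thm: coxeter_cuspidal}. The restriction of $f$ to the open piece $j:G\times^P(T_d\cdot O_{(r)}^d\cdot U_P)\hookrightarrow G\times^P P$ is affine: via the isomorphism \eqref{iso: UTO} it is built from the affine inclusion of the regular locus together with the $G\times^P$ induction. Since $\Mbb_a=(f\circ j)_\dagger\wt{\Ebb}_a=(f\circ j)_!\wt{\Ebb}_a$, Artin vanishing for $\dagger$ and $!$ pushforwards along an affine map gives both inequalities. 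If this is not straightforward, the alternative is to use that $f$ is semismall relative to the stratification, which follows from the Lagrangian property in Lemma \ref{lem: Lambda irr}.
\item Lemma \ref{lem: Lambda irr} says $SS(\Mbb_a)=\Lambda$ is irreducible and equal to $\ol{T^*_{\wt G_d^\circ}(G\times A_d)}$. Any simple summand supported on a proper closed subset $\overline S\not\supset\wt G_d^\circ$ would contribute the component $\ol{T^*_S}\neq\Lambda$ to the characteristic cycle. Characteristic cycles of holonomic modules are effective and add under direct sums, so no such summand exists.
\end{enumerate}
Hence every summand is $IC(G\times A_d,\Lcal)$ for some irreducible local system $\Lcal$ on $\wt G_d^\circ$, and $\Mbb_a\cong IC(G\times A_d,\Mbb_a|_{\wt G_d^\circ})$.

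\emph{Identifying the restriction to the open set.} It remains to show $\Mbb_a|_{\wt G_d^\circ}\cong\wt{\Ebb}_a^\circ$. The embedding $\wt G_d^\circ\to G\times A_d$ was defined as the composite of $f$ with the inclusion $\wt G_d^\circ\subset G\times^P P$. So it suffices to check that $f^{-1}(\wt G_d^\circ)=\wt G_d^\circ$ and that $f$ maps it isomorphically onto its image. Take a point $(g,\alpha)$ of the image with $\alpha=\nu(t)$, $t\in T_d^\circ$. A preimage $[g',h]$ has $q_P(h)=\alpha$, which pins down the semisimple part of $h$ up to $T_d$-translates lying over $\nu(t)$. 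The genericity condition defining $T_d^\circ$ ($\Stab_W(t)=W_L$ and trivial $S_d$-stabilizer) then forces $g'P$ to be unique: it must be the parabolic determined by the ordered eigenspace decomposition of $g\cdot h$. The unipotent part is then forced to lie in $O_{(r)}^d U_P$, as in the classical Grothendieck--Springer argument. Base change gives $\Mbb_a|_{\wt G_d^\circ}=\wt{\Mbb}_a|_{\wt G_d^\circ}=\wt{\Ebb}_a^\circ$.

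I expect this fiber computation to be the main obstacle. One has to handle the isogeny $\nu$ carefully, and also the preimage points $h$ in the closure strata, where the unipotent part is not regular in each block or $t\notin T_d^\circ$. These must map outside $\wt G_d^\circ$, and that is where the conditions in the definition of $T_d^\circ$ are used.
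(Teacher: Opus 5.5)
\textbf{Item 1 is false, but you do not need it.} Your item 1 (degree-$0$ concentration) depends on $f\circ j$ being affine, and it is not. Already for $d=1$ the source of $f\circ j$ is $T_1\cdot O_{(r)}\cong\C^\times\times O_{(r)}$. This is not affine for $r\geq 2$: the regular orbit has boundary of codimension $2$ in the normal variety $\Unip_{\GL_r}$, so its ring of functions is that of $\Unip_{\GL_r}$. The target $G\times A_1$, however, is affine. For $r=d=2$ the fibre over a unipotent $x$ of Jordan type $(2,2)$ even contains a $\Pbb^1$.

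In Theorem \ref{thm: coxeter_cuspidal} affineness came from Lemma \ref{lemma_section}, i.e.\ from $G\times^B(Bc^aB)\cong G/Z_G\times S_a$ being affine. Nothing comparable holds here. The semismallness fallback is also unsupported, since Lemma \ref{lem: Lambda irr} gives no fibre-dimension estimates.

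The repair is as follows.
\begin{itemize}
\item Apply your item 2 to each cohomology sheaf $\Hcal^i\Mbb_a$ separately. The singular support of a complex is the union of those of its cohomology sheaves, so no sign cancellation occurs. This excludes boundary-supported summands in every degree, shifted or not.
\item Your last paragraph, $\Mbb_a|_{\wt G_d^\circ}\cong\wt\Ebb_a^\circ$ in degree $0$, then forces the full-support summands into degree $0$.
\end{itemize}
With item 1 deleted, the argument is complete.

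\textbf{Comparison with the paper.} The repaired argument is essentially the paper's, except that the paper never uses the decomposition theorem. Given the identification on the open set, consider $\Hcal^i\Mbb_a$ for $i\neq 0$, and any subobject or quotient of $\Hcal^0\Mbb_a$ supported on the boundary. Each would have singular support inside a proper closed subset of the irreducible Lagrangian $\Lambda$, hence of dimension $<\dim(G\times A_d)$. That is impossible for a nonzero holonomic module. This is directly the characterization of the intermediate extension, with no purity or semisimplicity needed. Your use of the decomposition theorem is legitimate ($f$ is projective and $\wt\Mbb_a$ is pure), but superfluous.

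\textbf{Your fibre computation.} This part adds something: it justifies the open-set identification, which the paper only asserts in the proof of Lemma \ref{lem: Lambda irr}. You should state it as $f^{-1}(\wt G_d^\circ)\cap G\times^P(T_d\Unip_LU_P)=\wt G_d^\circ$, which is what your argument actually proves. The full preimage in $G\times^PP$ can be larger. For example, take $r=2$, $d=3$ and $t=(1,\omega,\omega^2)$ with $\omega^3=1$. There are $x$-stable flags whose graded pieces carry eigenvalues $\{\omega,\omega^2\}$, $\{1,\omega^2\}$, $\{1,\omega\}$, and these have the same block determinants as $t$. Such points lie outside the support of $\wt\Mbb_a$, so they do not affect the conclusion.
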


\begin{proof}

For $i\neq0$, the holonomic $D$-module $\Hcal^i\Mbb_a$ is supported on $\ol{\wt G_d^\circ}\setminus\wt G_d^\circ$. Its singular support is contained in a proper closed subvariety of $\Lambda$ of dimension less than $\dim\Lambda$, by Lemma \ref{lem: Lambda irr}. On the other hand, the singular support of every nonzero holonomic $D$-module on $X$ has pure dimension $\dim X$. Hence $\Hcal^i\Mbb_a=0$ for all $i\neq0$.

Set $M=\Hcal^0\Mbb_a$. If $K$ were a nonzero submodule or quotient of $M$ supported on $\ol{\wt G_d^\circ}\setminus\wt G_d^\circ$, then $SS(K)$ would again be contained in a proper closed subvariety of $\Lambda$, resulting in the same contradiction as above. Thus $M$ has no nonzero subobject or quotient supported on the boundary.

This is precisely the characterization of the intermediate extension, and the corollary follows.
\end{proof}

Let $\Ebb_a^\circ$ be the pushforward of $\wt{\Ebb}_a^\circ$ to $G_d^\circ$ along the $S_d$-\'etale map $q^\circ:\wt G_d^\circ\longrightarrow G_d^\circ.$

\begin{prop}\label{prop: IC N^d}
We have $\ol\Nbb_{a,r}^{\star d}\cong IC(G,\Ebb_a^\circ)$.
\end{prop}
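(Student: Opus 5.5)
We deduce the statement from Corollary \ref{coro: M IC} by pushing forward along the first projection $pr_1:G\times A_d\to G$. Since $\ol\Nbb_{a,r}^{\star d}$ is the pushforward of $\wt\Mbb_a$ to $G$, and the map $G\times^P P\to G$ factors as $pr_1\circ f$, we have
\[
\ol\Nbb_{a,r}^{\star d}\cong pr_{1\dagger} f_\dagger\wt\Mbb_a = pr_{1\dagger}\Mbb_a \cong pr_{1\dagger}\, IC(G\times A_d,\wt\Ebb_a^\circ).
\]
The support of $\Mbb_a$ is $\ol{\wt G_d^\circ}$, and this closure is the image of $f$. Since $f$ is proper ($G/P$ is projective), the restriction of $pr_1$ to $\ol{\wt G_d^\circ}$ is proper, with image $G_d$. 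So we are computing the proper pushforward of an IC sheaf along a map $\bar q:\ol{\wt G_d^\circ}\to G_d$ that restricts over $G_d^\circ$ to the finite \'etale $S_d$-cover $q^\circ$.

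Over $G_d^\circ$ we use $\wt G_d^\circ\cong G_d^\circ\times_{A_d^\circ/S_d}A_d^\circ$. This shows that $\bar q^{-1}(G_d^\circ)=\wt G_d^\circ$, so the restriction of the pushforward to $G_d^\circ$ is $q^\circ_\dagger\wt\Ebb_a^\circ=\Ebb_a^\circ$, which lies in degree $0$ because $q^\circ$ is finite \'etale. The claim then becomes that $\bar q$ is \emph{small}, in the sense of Goresky--MacPherson and Borho--MacPherson, relative to the strata on which the IC sheaf has nonzero stalks. Once smallness is established, the standard argument gives $\bar q_\dagger IC=IC(G_d,q^\circ_\dagger\wt\Ebb_a^\circ)$.

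To prove smallness, we argue as in Lemma \ref{lem: coxeter_cuspidal}. The stalks of $\wt\Mbb_a$ vanish except at points $[g,tuv]$ where each block of $u$ is regular unipotent, because of the cleanness of $\Kbb_a$. So the relevant part of the source is the image of $G\times^P(T_d\cdot O_{(r)}^d\cdot U_P)$. Stratify $G_d$ by the type of $x=g\cdot(tuv)$: group the eigenvalues of $x$ into generalized eigenspaces of dimensions $rm_i$, with $\sum_i m_i=d$, each carrying a unipotent part whose Jordan blocks come from $m_i$ blocks of size $r$. On such a stratum the fibre of $\bar q$ consists of flags of $P$-type adapted to $x$, that is, a product of generalized Springer-type fibres for the Levi $\prod_i\GL_{rm_i}$ with induction datum $(\GL_r^{m_i},O_{(r)}^{m_i})$. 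Lusztig's dimension estimate for generalized Springer fibres, $2\dim\bar q^{-1}(x)\le\dim G_d-\dim(\text{stratum})$, with strict inequality off the open stratum, then gives smallness. This is the cuspidal-datum case of Lusztig's induction theorem: the map attached to induction of a cuspidal pair $(L,O_{(r)}^d,\Kbb_a^{\boxtimes d})$ twisted by the centre is small \cite{Lusztig_character_sheaves_IV}.

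As an alternative, one can avoid explicit dimension counts by a singular support argument that parallels Corollary \ref{coro: M IC}. By Lemma \ref{lem: Lambda irr} and the properness of $pr_1$ on the support,
\[
SS(pr_{1\dagger}\Mbb_a)\subseteq \varpi\rho^{-1}(\Lambda),
\]
and this set is the closure of the conormal bundle to $G_d^\circ$. It is irreducible because $\Lambda$ is irreducible; for this step we need the fibres of $\rho^{-1}(\Lambda)\to T^*G$ to be generically finite. With that in hand, the support-and-singular-support argument of Corollary \ref{coro: M IC} shows that the pushforward is concentrated in degree $0$ and has no sub- or quotient objects supported on $G_d\setminus G_d^\circ$, which identifies it with $IC(G,\Ebb_a^\circ)$.

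The main obstacle is this smallness (equivalently, irreducibility) statement. In the second approach it amounts to the irreducibility of $\varpi\rho^{-1}(\Lambda)$, which requires showing that no component of $\rho^{-1}(\Lambda)$ lies over the boundary with large fibres. I would first try to reduce that to the commuting-variety irreducibility of \cite{Baranovsky}, exactly as in Lemma \ref{lem: Lambda irr}.
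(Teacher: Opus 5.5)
Your reduction is correct: by Corollary \ref{coro: M IC}, $\ol\Nbb_{a,r}^{\star d}\cong pr_{1\dagger}\Mbb_a$ with $\Mbb_a\cong IC(G\times A_d,\wt\Ebb_a^\circ)$, so everything comes down to the map $\bar q:\ol{\wt G_d^\circ}\to G_d$. But you then misidentify what remains to be shown, and neither of your routes establishes it.

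\textbf{The missing observation: $\bar q$ is finite.} Since $\bar q$ is the restriction of the projection $G\times A_d\to G$, its fibre over $x$ lies in $\{x\}\times A_d$. It is not a space of $P$-flags; those are the fibres of $pr_1\circ f$, which Corollary \ref{coro: M IC} has already absorbed. In fact these fibres are finite. The compatibility you use over $G_d^\circ$ extends to the closure by continuity: for $(x,\alpha)\in\ol{\wt G_d^\circ}$, the characteristic polynomial of $x$ is $\prod_i(X-t_i)^r$ with $\alpha\in S_d\cdot\nu(t)$. Hence $\ol{\wt G_d^\circ}\subseteq G_d\times_{A_d/S_d}A_d$, so $\bar q$ is proper and quasi-finite, hence finite.

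A finite pushforward is t-exact and commutes with duality. Given that $\bar q^{-1}(G_d^\circ)=\wt G_d^\circ$ is dense, it therefore commutes with intermediate extension. This gives $\bar q_\dagger IC(G\times A_d,\wt\Ebb_a^\circ)\cong IC(G,q^\circ_\dagger\wt\Ebb_a^\circ)$ at once. That is the paper's entire proof; no dimension estimates or generalized Springer theory are needed. If you instead meant to apply Lusztig's induction theorem to the composite $pr_1\circ f$, that is a much heavier alternative. It would make Corollary \ref{coro: M IC} redundant, and it would still have to be adapted to the centrally twisted $\GL_n$ setting.

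\textbf{Your second route fails.} Smallness is not equivalent to irreducibility of the singular support. Moreover, $\varpi\rho^{-1}(\Lambda)$ is in general reducible even though $\Lambda$ is irreducible, because $\rho^{-1}(\Lambda)$ cuts $\Lambda$ by the condition that the $\tfrak_d$-covector vanishes. A counterexample is the Grothendieck--Springer case $r=1$, $a=0$, $d=n=2$, where $G_d^\circ$ is the regular semisimple locus. Take $h_\epsilon=\begin{pmatrix}1&1\\0&1+\epsilon\end{pmatrix}$ and $\xi_\epsilon=c(h_\epsilon-I)$. These give points of $G\cdot\Lambda_L$ whose $\tfrak$-covector $-\mathrm{diag}(0,c\epsilon)$ tends to $0$, while $(h_\epsilon,\xi_\epsilon)$ tends to a regular unipotent element together with a nonzero nilpotent in its centralizer. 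So $\varpi\rho^{-1}(\Lambda)$ contains, besides the zero section, the closure of $T^*_{Z_G\cdot\Unip_r}G$, which also has dimension $\dim G$. Correspondingly, the characteristic variety of the Grothendieck--Springer module is reducible, although the proposition holds. So the argument of Corollary \ref{coro: M IC} cannot be transported to $G$, and a reduction to commuting-variety irreducibility will not rescue it.
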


\begin{proof}
The closure $\ol{\wt G_d^\circ}$ inside $G\times A_d$ lies inside $G_d\times_{A_d/S_d}A_d.$ Hence the projection $q:\ol{\wt G_d^\circ}\longrightarrow G_d$ is finite. Since finite pushforward is exact, it commutes with intermediate extension. Using Corollary \ref{coro: M IC}, we then have
	\[
    \ol\Nbb_{a,r}^{\star d}
    = q_*IC\!\left(G\times A_d,\wt{\Ebb}_a^\circ\right) \cong IC\bigl(G,q^\circ_*\wt{\Ebb}_a^\circ\bigr)
    = IC(G,\Ebb_a^\circ).\qedhere
\]
\end{proof}

\begin{proof}[Proof of Theorem \ref{prop_Sd_action}]
We first show there is an injection $\C S_d\hookrightarrow \End_{\mathrm{VHS}}(\Ebb_a^\circ).$ Let $(\wt{\Ebb}_a^\circ)^e$ be the connection on $T_d^\circ\cdot O^d_{(r)}$ pulled back from the connection $\Kbb_a^{\boxtimes d}$ on $O^d_{(r)}$. Then as discussed earlier, $(\wt{\Ebb}_a^\circ)^e$ is $S(L)$-equivariant. Note that $\wt{G}_d^\circ\cong G\times^L(T_d^\circ\cdot O^d_{(r)})$. Hence we have $\wt{\Ebb}_a^\circ=\Ind_{S(L)}^{\SL_n}(\wt{\Ebb}_a^\circ)^e$.  Now $(\wt{\Ebb}_a^\circ)^e$ is equivariant under the permutation $S_d$-action on the $d$-blocks. Let $N(S(L))$ be the normalizer of $S(L)$ in $\SL_n$. Then $(\wt{\Ebb}_a^\circ)^e$ is further $N(S(L))$-equivariant. Therefore, $\wt{\Ebb}_a^\circ$ is $S_d$-equivariant for the deck action. Hence $S_d$ acts on $\Ebb_a^\circ$. Since $S_d$ acts on every stalk of $\Ebb_a^\circ$ via its regular action, the action is faithful and we obtain the embedding $\C[S_d]\hookrightarrow \End_{\mathrm{VHS}}(\Ebb_a^\circ)$.

Now we use Proposition \ref{prop: IC N^d}. Since intermediate extension is fully faithful, the theorem follows.
\end{proof}

For any $\tau\vdash d$, let $V_\tau$ be the corresponding irreducible module of $S_d$. Let $\Ebb_{a,\tau} = \Hom_{S_d}(V_\tau,\Ebb_a^\circ).$ Then, under the $S_d$-action, $\ol\Nbb_{a,r}^{\star d}$ decomposes as
\[
    \ol\Nbb_{a,r}^{\star d} \cong \bigoplus_{\tau\vdash d} V_\tau\otimes IC(G,\Ebb_{a,\tau}),
\]
where each summand is viewed on $G$ by closed pushforward.

\begin{coro}\label{coro: gr(convolution)}
Under the $S_d$-action from Theorem \ref{prop_Sd_action}, $\gr(\ol\Nbb_{a,r}^{\star d})$ decomposes as
	\[
    \gr(\ol\Nbb_{a,r}^{\star d}) \cong \bigoplus_{\tau\vdash d} V_\tau\otimes\gr(IC(G,\Ebb_{a,\tau})).
\]
\end{coro}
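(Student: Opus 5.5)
The plan is to deduce the corollary directly from the isotypic decomposition of $\ol\Nbb_{a,r}^{\star d}$ displayed just above, combined with the additivity and functoriality of $\gr$ (Proposition \ref{Prop:Hodge_gr_equiv}).

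First, I will upgrade the embedding $\C S_d\hookrightarrow \End_{\mathrm{VHS}}(\Ebb_a^\circ)$ from the proof of Theorem \ref{prop_Sd_action} to an action by morphisms of variations of Hodge structure. The deck action of $S_d$ on $\wt{\Ebb}_a^\circ$ comes from the $N(S(L))$-equivariance of $(\wt{\Ebb}_a^\circ)^e$, and the Hodge structure there is pulled back from the type $(0,0)$ structure on $\Kbb_a^{\boxtimes d}$. That structure is preserved by block permutations, so the $S_d$-action respects the Hodge structure. Consequently the isotypic components $\Ebb_{a,\tau}=\Hom_{S_d}(V_\tau,\Ebb_a^\circ)$ are sub-VHS. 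We then have $\Ebb_a^\circ\cong\bigoplus_\tau V_\tau\otimes\Ebb_{a,\tau}$ as VHS, where $V_\tau$ carries the trivial Hodge structure of type $(0,0)$ and is viewed as a multiplicity space.

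Next, I will apply the intermediate extension $IC(G,-)$, which is additive and fully faithful on polarizable Hodge modules. Together with Proposition \ref{prop: IC N^d}, this gives an isomorphism $\ol\Nbb_{a,r}^{\star d}\cong\bigoplus_\tau V_\tau\otimes IC(G,\Ebb_{a,\tau})$ in $\MHM_G(G)$. The $S_d$-action of Theorem \ref{prop_Sd_action} corresponds on the right-hand side to the action on the factors $V_\tau$. The isomorphism of Proposition \ref{prop: IC N^d} is compatible with Hodge structures, since finite pushforward commutes with intermediate extension in $\MHM$. This is the point I expect needs the most care: the action must be realized in $\MHM$ and not merely on underlying $D$-modules, so that it survives passage to $\gr$.

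Finally, $\gr:\MHM_G(G)\to\Coh_{G\times\C^\times}(\mu^{-1}(0))$ is an exact additive functor. It therefore commutes with finite direct sums and with tensoring by the multiplicity vector spaces $V_\tau$, and it carries the $S_d$-action by $\MHM$-endomorphisms to an $S_d$-action on $\gr(\ol\Nbb_{a,r}^{\star d})$. Applying $\gr$ to the decomposition yields $\gr(\ol\Nbb_{a,r}^{\star d})\cong\bigoplus_\tau V_\tau\otimes\gr(IC(G,\Ebb_{a,\tau}))$, equivariantly for $S_d$. Each summand is supported in $G\times\Nilp$, as for all character sheaves.
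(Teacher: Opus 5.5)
Your proposal is correct and is essentially the argument the paper intends: the corollary is stated without a separate proof, directly after the isotypic decomposition $\ol\Nbb_{a,r}^{\star d}\cong\bigoplus_{\tau} V_\tau\otimes IC(G,\Ebb_{a,\tau})$ in $\MHM$, and it follows by applying the exact additive functor $\gr$. The Hodge compatibility you flag as the delicate point is already built into the paper's proof of Theorem \ref{prop_Sd_action}, where the embedding lands in $\End_{\mathrm{VHS}}(\Ebb_a^\circ)$; it is also automatic here, because $\Ebb_a^\circ$ has a single Hodge type (pulled back from the type $(0,0)$ structure on $\Kbb_a^{\boxtimes d}$), so every flat endomorphism preserves the Hodge filtration.
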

\subsection{Lie algebra case}\label{SS:Lie algebra case}
Let $\gfrak=\gl_n$. The notations $\gfrak_d^\circ$, $\wt{\gfrak}_d^\circ$, $\Ebb_{a,\tau}$, etc. have meaning similar to the corresponding notations for $G$.

There is an analogue of Corollary \ref{coro: gr(convolution)} in this setting, which answers the first part of \cite[Conjecture 1.2]{GN2} replacing their $\Hcal_{m,n}$ by the derived version of their $\Hcal'_{m,n}$  {(these sheaves are conjecturally the same up to a grading shift, cf. \cite[Conjecture 2.2]{GN2}). Moreover, by \cite[Proposition 3.8]{ma}, $\mathcal{H}'_{m,n}$ is the Hodge associated graded of $\overline{\mathbb{N}}_m$ if $\operatorname{GCD}(m,n)=1$}.

The description of the decomposition can be made more explicit in the style of
\cite{HK}.  
Let $\Ffrak$ denote the Fourier transform on $\Dcal_\gfrak$-modules. For any $\tau=(\tau_i)\vdash d$, let $O_{r\tau}\subset \gfrak$ denote the nilpotent orbit corresponding to { $r\tau:=(r\tau_1,\ldots, r\tau_k)$}. Let $b=\gcd(\tau_i)$. Then $\pi_1(O_{r\tau})=\pi_0(\Stab_{\SL_n}(x))=\Z/br\Z$ for an element $x\in O_{r\tau}$. For an integer $a$ coprime to $r$, let $\Kbb_{a,\tau}$ be the integrable connection on $O_{r\tau}$ corresponding to $e^{2\pi a\sqrt{-1}/r}\in (\Z/br\Z)^*$. Let $\tau^t$ denote the transpose of $\tau$.
\begin{prop}
$\Ffrak(IC(\gfrak,\Ebb_{a,\tau}))=IC(\gfrak,\Kbb_{a,\tau^t})$.
\end{prop}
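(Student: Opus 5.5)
The plan is to imitate Hotta--Kashiwara: pass the whole generalized Grothendieck--Springer object through the Fourier transform, and then match isotypic components on both sides.

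\emph{Step 1: the Lie algebra analog of Proposition \ref{prop: IC N^d}.} Let $\ol\Nbb^{\star d}_{a,r,\gfrak}$ denote the parabolic induction of $\ol\Nbb_{a,r}^{\boxtimes d}$ from $\mathfrak l=\gl_r^{\oplus d}$ to $\gfrak$. Here $\ol\Nbb_{a,r}$ is the cuspidal twisted D-module on $\gl_r$: it is the one on the nilpotent cone of $\mathfrak{sl}_r$, times $\Ocal$ along the center $\zfrak(\gl_r)$. The argument of Lemma \ref{lem: Lambda irr} and Corollary \ref{coro: M IC} goes through verbatim, giving
\[
\ol\Nbb^{\star d}_{a,r,\gfrak}\cong\bigoplus_{\tau\vdash d}V_\tau\otimes IC(\gfrak,\Ebb_{a,\tau}),
\]
with $S_d$ acting by the deck action on $\wt\gfrak_d^\circ\to\gfrak_d^\circ$.

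\emph{Step 2: Fourier transform of the induced object.} I would use the standard fact that $\Ffrak$ commutes with parabolic induction: $\Ffrak\circ\Ind_{\pfrak}^{\gfrak}\cong\Ind_{\pfrak}^{\gfrak}\circ\Ffrak_{\mathfrak l}$, proved via the correspondence $\mathfrak l\leftarrow\pfrak\to\gfrak$ and the compatibility of $\Ffrak$ with linear maps (Mirkovi\'c, Lusztig). I would also use that the cuspidal D-module on $\mathfrak{sl}_r$ is Fourier-stable (Lusztig, Fourier transform of cuspidal sheaves, possibly up to replacing $a$ by $-a$). Since $\Ffrak(\Ocal_{\zfrak})=\delta_0$, we get that $\Ffrak(\ol\Nbb_{a,r}^{\boxtimes d})$ is the cuspidal object supported on $\Nilp_L$. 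So $\Ffrak(\ol\Nbb^{\star d}_{a,r,\gfrak})$ is Lusztig's generalized Springer sheaf $\Ind_{\pfrak}^{\gfrak}(\Kbb_a^{\boxtimes d})$ on $\Nilp_\gfrak$ for the cuspidal datum $(L,O_{(r)}^d,\Kbb_a^{\boxtimes d})$. Its endomorphism algebra is $\C[N_W(W_L)/W_L]=\C S_d$, and Lusztig's generalized Springer correspondence for $\mathrm{GL}_n$ (via $\SL_n$) decomposes it as $\bigoplus_{\tau}V_\tau\otimes IC(O_{r\tau'},\Kbb_{a,\tau'})$ for a bijection $\tau\mapsto\tau'$. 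The local system on each orbit is forced by the central character $z\mapsto z^a$ of $\mu_r$, exactly as in the proof of Lemma \ref{lem: coxeter_cuspidal}.

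\emph{Step 3: matching the two $S_d$-actions.} This is the main obstacle: the two $S_d$-actions have to be compared. By functoriality, $\Ffrak$ carries the deck $S_d$-action of Step 1 to an $S_d$-action on the generalized Springer sheaf. As in Hotta--Kashiwara, this should differ from Lusztig's normalized action by the sign character. I would prove this by restricting to a transversal slice at a generic point of $\tfrak_d$, which reduces the question to the rank-one case ($d=2$, an $\mathfrak{sl}_2$-type computation, as in the classical Springer case). Combining the sign twist with Lusztig's normalization (trivial representation $\leftrightarrow$ the regular orbit in the series, i.e.\ partition $r(d)$) gives $\tau'=\tau^t$.

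As a consistency check I would use $r=1$: there $\IC(\gfrak,\Ebb_{0,(n)})=\Ocal_\gfrak$ has Fourier transform $\delta_0$, supported on the orbit $(1^n)=(n)^t$. More generally, for $\tau=(d)$ the $S_d$-invariant part is the smooth pushforward from $\gfrak_d$, and its Fourier transform is supported on the minimal orbit $r(1^d)$. Together these fix both the sign twist and the normalization.
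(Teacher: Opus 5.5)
Your plan is essentially the paper's proof. The paper also:
\begin{enumerate}
\item moves $\Ffrak$ past parabolic induction, implemented as the partial Fourier transform along $\gfrak$ on $G\times^P\gfrak$ via $(\gfrak/\pfrak)^*\cong\nfrak_\pfrak$;
\item uses Fourier stability of the cuspidal $\Nbb_a$ on $\sln$, citing \cite{BezLosev};
\item asserts, ``similar to \cite[Theorem 5.2]{HK}'', that the $S_d$-action transported by $\Ffrak$ is the Springer-theoretic one twisted by $\sign$;
\item reads off the labeling $V_\tau\leftrightarrow(O_{r\tau},\Kbb_{a,\tau})$ from \cite[Proposition 5.2]{Lusztig-Spaltenstein}.
\end{enumerate}
Your hedge ``up to $a\mapsto -a$'' in (2) is unnecessary. $\Ffrak$ is $G$-equivariant, so it preserves the central character $z\mapsto z^a$, and that character determines the cuspidal local system. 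The only place you go beyond the paper is your attempted proof of (3), and that argument does not work as written.

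\emph{The slice argument.} $\Ffrak(\ol{\Nbb}^{\star d}_{a,r,\gfrak})$ is supported on the nilpotent cone, and $\Ffrak$ is not local. So restricting to a transversal slice at a generic (non-nilpotent) point of $\tfrak_d$ only sees the deck action on the Grothendieck--Springer side. On its own it says nothing about the action transported by $\Ffrak$; a slice argument could only help together with a local invariant that $\Ffrak$ respects, such as characteristic cycles of monodromic modules.

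\emph{The correct reduction to rank one.} This should go through induction. For a simple transposition $s_i$, take $M_i=\GL_r^{i-1}\times\GL_{2r}\times\GL_r^{d-i-1}$.
\begin{itemize}
\item By transitivity, both the Grothendieck--Springer object and its Fourier transform are induced from $M_i$.
\item The deck action of $s_i$ is induced from the $M_i$-level one. Check this on $\gfrak_d^\circ$ and use full faithfulness of intermediate extension.
\item Both $\Ffrak$ and restriction to the nilpotent cone commute with this induction.
\end{itemize}
If $s_i$ acts with opposite signs in the two normalizations for every $i$, then the two actions differ by $\sign$.

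\emph{The rank-one case itself.} For $r>1$, the remaining case ($\GL_{2r}$, $d=2$) is not an $\mathfrak{sl}_2$ computation. Each action of $s$ is $\pm 1$ on the two simple summands, supported on $\ol{O_{(2r)}}$ and $\ol{O_{(r,r)}}$. You still have to prove that $\Ffrak(IC(\gfrak,\Ebb_{a,(2)}))$ is the summand on $\ol{O_{(r,r)}}$. This is where a characteristic-cycle computation near regular elements of $\gfrak_d$ would enter.

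Your consistency checks cannot replace this. Knowing where the trivial isotypic component goes does not determine an automorphism of $\C S_d$. Also, the claim about $\tau=(d)$ for $r>1$ is stated without proof.
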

\begin{proof}
First consider the case when $d=1$. Here let $\Nbb_{a}$ denote the $\Dcal_{\sln}$-module analogous to the group version in Section \ref{SS:cuspidal}. Under the decomposition $\gl_n=\sln\oplus \zfrak$, we have $IC(\gfrak,\Ebb_{a,(1)})=\Nbb_a\boxtimes \Ocal_\zfrak$. Let $z$ be the coordinate of $\zfrak$ and $\delta_\zfrak:=\Dcal_\zfrak/\Dcal_\zfrak z$. Then $IC(\gfrak,\Kbb_{a,(1)})=\Nbb_a\boxtimes \delta_\zfrak$. Let $\Ffrak_{\sln}$ denote the Fourier transform for $\Dcal_{\sln}$-modules. Then the proposition in this case is implied by the isomorphism $\Ffrak_{\sln}(\Nbb_{a})\cong \Nbb_{a}$. This is shown in the proof of \cite[Proposition 6.4]{BezLosev}.

{  We proceed to the case $d>1$.} Let $\nfrak_\pfrak$ be the nilpotent radical of $\pfrak$ with Levi decomposition $\pfrak=\mathfrak{l}\ltimes \nfrak_\pfrak$. Let $O_{(r)}\subset \gl_r$ be the regular nilpotent orbit. Define $\wt{\gfrak}_d:= G\times^P\left(\tfrak_d + O_{(r)}^d+ \nfrak_\pfrak \right)$ with $\pi_1(\wt{\gfrak}_d)=\Z/r\Z$. Let $\wt{\Ebb}_a$ be the integrable connection on $\wt{\gfrak}_d$ corresponding to $e^{2\pi a\sqrt{-1}/r}$ in $(\Z/r\Z)^*$. Let $p:G\times^P \pfrak\to \gfrak$ be the action map and $j: \wt{\gfrak}_d\to G\times^P \pfrak$ be the embedding. By definition, $\ol{\Nbb}_{a,(r)}^{* d}=p_\dagger j_\dagger \wt{\Ebb}_a$.

Let $\wt{\Kbb}_a$ be the restriction of $\wt{\Ebb}_a$ to $G\times^P\left( O_{(r)}^d+ \nfrak_\pfrak\right)$. Let $\Ffrak_{{\gfrak}}$ denote the partial Fourier transform on $G\times^P {\gfrak}$ along the ${\gfrak}$ direction. Note that $(\gfrak/\pfrak)^*\cong \nfrak_\pfrak$ under the trace form. This plus the proposition in the case of $d=1$ implies that $\Ffrak_\gfrak(IC(G\times^P \gfrak, \wt{\Ebb}_a))=IC(G\times^P \gfrak, \wt{\Kbb}_a)$ (the same holds if we replace the intermediate extensions with the $\dagger$- or $!$-extensions in both parts). By Springer theory, there is an $S_d$ action on $q_\dagger IC(G\times^P \gfrak, \wt{\Kbb}_a)$ for $q: G\times^P \gfrak\to \gfrak$. Similar to \cite[Theorem 5.2]{HK}, there is an isomorphism of $S_d$-modules:
	\[
    \Ffrak q_\dagger(IC(G\times^P \gfrak, \wt{\Ebb}_a))=q_\dagger IC(G\times^P \gfrak, \wt{\Kbb}_a)\otimes \sign.
\]
As $V_\tau\otimes \sign\cong V_{\tau^t}$, it remains to show the $S_d$-module decomposition
	\[
    q_\dagger IC(G\times^P \gfrak, \wt{\Kbb}_a)\cong \bigoplus_{\tau\vdash d} V_\tau\otimes IC(\gfrak,\Kbb_{a,\tau}).
\]
For this, we apply the generalized Springer correspondence \cite[Proposition 5.2]{Lusztig-Spaltenstein} for the cuspidal datum
	\[
    \bigl(L\cap\SL_n,\, O_{(r)}^d,\, \Kbb_a^{\boxtimes d}\bigr),
\]
which has relative Weyl group $S_d$. It identifies the multiplicity space attached to $(O_{r\tau},\Kbb_{a,\tau})$ with the irreducible representation $V_\tau$.
\end{proof}

\end{document}